\documentclass[a4paper, 11pt]{article}
\usepackage [a4paper,left=2.5cm,bottom=2.5cm,right=2.5cm,top=2.5cm]{geometry}
\usepackage[english]{babel}
\usepackage{amssymb}
\usepackage{amsmath,amsthm, dsfont}
\usepackage{subcaption}
\usepackage{tabularx,array}
\setcounter{tocdepth}{3}
\usepackage{graphicx, url}
\usepackage{enumitem}
\usepackage[normalem]{ulem}

\theoremstyle{plain}
\newtheorem{remark}{Remark}
\newtheorem{ass}{Assumption}
\newtheorem{theorem}{Theorem}[section]
\newtheorem{lemma}[theorem]{Lemma}

\newtheorem{proposition}[theorem]{Proposition}

\newcommand{\keywords}[1]{\par\addvspace\baselineskip
\noindent\enspace\ignorespaces#1}
\usepackage{color}
\usepackage[dvipsnames]{xcolor}
\newcommand{\modch}{\color{black}}
\newcommand{\modar}{\color{black}}

\newcommand{\modchi}{\color{blue}}


\newcommand{\one}{\mathds{1}}

\newcommand{\E}{\mathbb{E}}
\newcommand{\R}{\mathbb{R}}
\newcommand{\w}{\widehat}

\renewcommand{\P}{\mathbb{P}}

\newcommand{\pen}{\text{pen}}
\newcommand{\cM}{\mathcal{M}}
\newcommand{\abs}[1]{\lvert #1 \rvert}


\title{On the nonparametric inference 
of coefficients\\ of self-exciting jump-diffusion
}
\author{Chiara Amorino$^{(1)}$, Charlotte Dion-Blanc$^{(2)}$,  Arnaud Gloter$^{(3)}$, Sarah Lemler$^{(4)}$}

\date{}

\begin{document}
\maketitle

\footnote{
\begin{itemize}
    \item[$^{(1)}$] Unit\'e de Recherche en Math\'ematiques, Universit\'e du Luxembourg, \url{chiara.amorino@uni.lu}. \\
    Chiara Amorino gratefully acknowledges financial support of ERC Consolidator Grant 815703 “STAMFORD: Statistical Methods for High Dimensional Diffusions”.
    \item[$^{(2)}$] LPSM, Sorbonne Université 75005 Paris UMR CNRS 8001
    \url{charlotte.dion_blanc@sorbonne-universite.fr.}
    \item[$^{(3)}$] Laboratoire de Math\'ematiques et Mod\'elisation d'Evry, CNRS, Univ Evry, Universit\'e Paris-Saclay, 91037, Evry, France \url{arnaud.gloter@univ-evry.fr}
    \item[$^{(4)}$] Universit\'e Paris-Saclay, \'Ecole CentraleSup\'elec, MICS Laboratory, France,\\ \url{sarah.lemler@centralesupelec.fr}
\end{itemize}
}

\begin{abstract}
In this paper, we consider a one-dimensional diffusion process with jumps driven by a Hawkes process.
We are interested in the estimations of the volatility function and of the jump function from discrete high-frequency observations in a long time horizon which remained an open question until now. First, we propose to estimate the volatility coefficient. For that, we introduce a truncation function in our estimation procedure that allows us to take into account the jumps of the process and estimate the volatility function on a linear subspace of $L^2(A)$ where $A$ is a compact interval of $\R$. We obtain a bound for the empirical risk of the volatility estimator, ensuring its consistency, and then we study an adaptive estimator w.r.t. the regularity. Then, we define an estimator of a sum between the volatility and the jump coefficient modified with the conditional expectation of the intensity of the jumps. We also establish a bound for the empirical risk for the non-adaptive estimators of this sum, the convergence rate up to the regularity of the true function, and an oracle inequality for the final adaptive estimator.

Finally, we give a methodology to recover the jump function in some applications.
We conduct a simulation study to measure our estimators' accuracy in practice and discuss the possibility of recovering the jump function from our estimation procedure.

\keywords{Jump diffusion, Hawkes process, Volatility estimation, Nonparametric, Adaptation\\
AMS: 62G05, 60G55}
\end{abstract}

\section{Introduction}

The present work focuses on the jump-diffusion process introduced in \cite{DLL}.
It is defined as the solution of the following equation
\begin{eqnarray}\label{eq:model}
 dX_t =b(X_t)  dt + \sigma(X_t)  dW_{t} + a(X_{t^-}) \sum_{j =1}^M dN^{(j)} _t,
\end{eqnarray}
where $X_{t^-}$ denotes the process of left limits, $N=(N^{(1)}, \ldots, N^{(M)})$ is a $M$-dimensional Hawkes process with  intensity function $\lambda$ and $W$ is the standard Brownian motion independent of $N$. Some probabilistic results have been established for this model in \cite{DLL}, such as the ergodicity and the  $\beta-$mixing. A second work has then been conducted to estimate the drift function of the model using a model selection procedure and upper bounds on the risk of this adaptive estimator have been established in \cite{Dion Lemler} in the high frequency observations context. 

In this work, we are interested in estimating the volatility function $\sigma^2$ and the jump function $a$.
The jumps in this process make estimating these two functions difficult. We assume that discrete observations of a $X$ are available at high frequency and on a large time interval.  


\subsection{Motivation and state of the art}

Let us notice first that this model has practical relevance thinking of continuous phenomenon impacted by an exterior event, with auto-excitation structure.
For example, one can think of the interest rate model (see \cite{gomez}) in insurance; then, in neurosciences of the evolution of the membrane potential
impacted by the signals of the other neurons around it (see \cite{Dion Lemler}). Indeed, it is common to describe the \textit{spike train} of a neuron through a Hawkes process which models the auto-excitation of the phenomenon: for a specific type of neurons, when it spikes once, the probability that it will spike again increases. 
Finally, referring to \cite{bacryfinance} for a complete review on Hawkes process in finance, the reader can see the considered model as a generalization of the so-called mutually-exciting-jump diffusion proposed in \cite{aitsahalia} to study an asset price evolution.
This process generalizes Poisson jumps (or L\'evy jumps, which have independent increments) with auto-exciting jumps and is more tractable than jumps driven by L\'evy process.

Nonparametric estimation of coefficients of stochastic differential equations from the observation of a discrete path is a challenge studied a lot in literature. From a frequentist point of view in the high-frequency context, one can cite \cite{hoffmann, CGCR} and in bayesian, one recently in \cite{kweku}. 
Nevertheless, the purpose of this article falls more under the scope of statistics for stochastic processes with jumps. 
The literature for the diffusion with jumps from a pure centered L\'evy process is large. For example one can refer to \cite{jacod}, \cite{reiss} and \cite{Schmisser}.

The first goal of this work is to estimate the volatility coefficient $\sigma^2$. As it is well known, in the presence of jumps, the approximate quadratic variation based on the squared increments of $X$ no longer converges to the integrated volatility. As in \cite{mancinireno}, we base the approach on truncated quadratic variation to estimate the coefficient $\sigma^2$.
The structure of the jumps here is very different from the one induced by the pure-jump L\'evy-process. Indeed, the increments are not independent, and this implies the necessity to develop a proper methodology as the one presented hereafter. 

Secondly, we want to to find a way to approximate the coefficient $a$. It is important to note that, as presented in \cite{Schmisser}, in the classical jump-diffusion framework (where a L\'evy process is used instead of the Hawkes process for $M=1$), it is possible to obtain an estimator for the function $\sigma^2+a^2$ by considering the quadratic increments (without truncation) of the process.
This is no longer the case here due to the form of the intensity function of the Hawkes process. Indeed, we recover a more complicated function to be estimated, as explained in the following. 

\subsection{Main contribution}
The estimations of the coefficients in Model (\ref{eq:model}) are challenging in the sense that we have to take into account the jumps of the Hawkes process. Statistical inference for the volatility and for the jump function in a jump-diffusion model with jumps driven by a Hawkes process has never been studied before. As for the estimation of the drift in \cite{Dion Lemler}, we assume that the coupled process $(X,\lambda)$ is ergodic, stationary, and exponentially $\beta-$mixing. Besides, in this article we obtain that the projection on $X$ of the invariant measure of the process has a density which is lower and upper bounded on compact sets. This property is useful to lead studies of convergence rates for nonparametric estimators since it gives equivalence between empirical and continuous norms.
To estimate the volatility in a nonparametric way, as in \cite{Unbiased} we consider a truncation of the increments of the quadratic variation of $X$ that allows judging if a jump occurred or not in a time interval. We estimate $\sigma^2$ on a collection of subspaces of $L^2$ by minimizing a least-squares contrast over each model, and we establish for the obtained estimators a bound on the risk. We give the convergence rates of these estimators depending on the regularity of the true volatility function. Then, we propose a selection model procedure through a penalized criteria, we obtain non-asymptotic oracle-type inequality for the final estimator that guarantees its theoretical performance.

In the second part of this work, we are interested in the estimation of the jump function 
As it has been said before, it is not possible to recover directly the jump function $a$ from the quadratic increments of $X$, and what appears naturally is the sum of the volatility and of the product of the square of the jump function and the jump intensity. The jump intensity is hard to control properly, and it is unobserved. To overcome such a problem, we introduce the conditional expectation of the intensity given the observation of $X$, which leads us to estimate the sum of the volatility and of the product between $a^2$ and the conditional expectation of the jump intensity given $X$. We lead a penalized minimum contrast estimation procedure again, and we establish a non-asymptotic oracle inequality for the adaptive estimator. The achieved rates of convergence are similar to the ones obtained in the Lévy jump-diffusion context  in \cite{Schmisser}.
Both adaptive estimators are studied using Talagrand's concentration inequalities. 

We then discuss how we can recover $a$, as a quotient in which we plug the estimators of $\sigma^2$ and $g:=\sigma^2+a^2\times f$, where $f$ is the conditional expectation of the jump intensity that we do not know in practice. We propose to estimate $f$ using a Nadaraya-Watson estimator. We show that the risk of the estimator of $a$ cumulates the errors coming from the estimation of the three functions $\sigma^2$, $g$ and the conditional expectation of the jump intensity, which shows how hard it is to estimate $a$ correctly. 

Finally, we have conducted a simulation study to observe the behavior of our estimators in practice. We compare the empirical risks of our estimators to the risks of the oracle estimator to which we have access in a simulation study (they correspond to the estimator in the collection of models, which minimizes the empirical error). We show that we can recover rather well the volatility $\sigma^2$ and $g$ from our procedure, but it is harder to recover the jump function $a$. 

\subsection{Plan of the paper}

The model is described in Section \ref{sec:framework}, some assumptions on the model are discussed and we give  properties on the process $(X_t,\lambda_t)$. In Section \ref{sec:volatility} we present the adaptive estimation procedure for the volatility $\sigma^2$ and obtain the consistency and the convergence rate.
 Section \ref{S: volatility + jumps} is devoted to the estimation of $\sigma^2+a^2\times f$, where $f$ is the expectation of the jump intensity $\lambda$ given $X$. In this section, we return to the reason for estimating this function, we detail the estimation procedure and establish bounds for the risks of the non-adaptive estimator and of the adaptive estimator in the regularity. The estimation of the jump coefficient $a$ is discussed in Section \ref{sec:a}. In Section \ref{sec:numerical} we have conducted a simulation study and give a little conclusion and some perspective to this work in Section \ref{sec:discussion}. Finally, the proofs of the main results are detailed in Section \ref{sec:proofs} and the technical results are proved in Appendix \ref{sec:appendix}.

\section{Framework and Assumptions}
\label{sec:framework}

\subsection{The Hawkes process}
Let $(\Omega, \mathcal{F}, \mathbb{P})$ be a probability space. We define the Hawkes process for $t \ge 0$ through stochastic intensity representation. We introduce the $M$-dimensional point process 
$N_t := (N_t^{(1)},\ldots , N_t^{(M)} )$ and its intensity $\lambda$ is a vector of non-negative stochastic intensity functions given by a collection of baseline
intensities. It consists in positive constants $\zeta_j$, for $j \in \left \{ 1,\ldots , M \right \}$, and in $M \times M$ interaction functions $h_{i,j} : \mathbb{R}^+ \rightarrow \mathbb{R}^+$, which are measurable functions ($i, j \in \left \{ 1,\ldots , M \right \}$).
For $i \in \left \{ 1,\ldots , M \right \}$ we also introduce $n^{(i)}$, a discrete point measure on $\mathbb{R}^-$ satisfying
$$\int_{\mathbb{R}^-} h_{i,j}(t - s) n^{(i)}(ds) < \infty \qquad \mbox{for all }t \ge 0. $$ 
They can be interpreted as initial condition of the process. The linear Hawkes process with initial condition $n^{(i)}$ and with parameters $(\zeta_i, h_{i,j})_{1 \le i,j \le M}$ is a multivariate counting process $(N_t)_{t \ge 0}$. It is such that for all $i \neq j $, $\mathbb{P}$ - almost surely, $N^{(i)}$ and $N^{(j)}$ never jump simultaneously. Moreover, for any $i \in \left \{ 1,\ldots , M \right \}$, the compensator of $N^{(i)}$ is given by $\Lambda_t
^{(i)} := \int_0^t \lambda_s^{(i)} ds$, where $\lambda$ is the intensity process of the counting process $N$ and satisfies the following equation:
$$\lambda_t^{(i)} = \zeta_i + \sum_{j = 1}^M \int_0^{t^-} h_{i,j} (t - u) dN_u^{(j)} + \sum_{j = 1}^M \int_{- \infty}^0 h_{i,j} (t - u) dn_u^{(j)}.$$
We remark that $N_t^{(j)}$ is the cumulative number of events in the j-th component at time t while $dN^{(j)}_t$ represents the number of points in the time increment $[t, t + dt]$.
We define $\tilde{N}_t := N_t - \Lambda_t$ and $\bar{\mathcal{F}}_t := \sigma (N_s, 0 \le s \le t) $ the history of the counting process $N$ (see Daley and Vere - Jones \cite{Daley2007}). The intensity process $\lambda= (\lambda^{(1)}, \ldots, \lambda^{(M)})$ of the counting process $N$ is the $\bar{\mathcal{F}}_t$-predictable process that makes $\tilde{N}_t$ a $\bar{\mathcal{F}}_t$-local martingale. 

Requiring that the functions $h_{i,j}$ are locally integrable, it is possible to prove with standard arguments the existence of a process $(N_t^{(j)})_{t \ge 0}$ (see for example \cite{Delattre_Hoffmann}). We denote as $\zeta_j$ the exogenous intensity of the process and as $(T^{(j)}_k)_{k \ge 1}$ the non-decreasing jump times of the process $N^{(j)}$. 

We interpret the interaction functions $h_{i,j}$ (also called kernel function or transfer function) as the influence of the past activity of subject $i$ on the subject $j$, while the parameter $\zeta_j>0$ is the spontaneous rate and is used to take into account all the unobserved signals.
In the sequel we focus on the exponential kernel functions defined by
$$h_{i,j}: \mathbb{R}^+ \rightarrow \mathbb{R}^+, \quad h_{i,j} (t) = c_{ij} e^{- \alpha t}, \quad \alpha > 0, \quad c_{ij} > 0, \quad 1 \le i,j\le M.$$
With this choice of $h_{i,j}$ the conditional intensity process $(\lambda_t)$ is then Markovian. In this case we can introduce the auxiliary Markov process $Y = Y^{(i j)}$:
$$Y^{(i j)}_t = c_{i,j} \int_0^t e^{- \alpha(t- s)} d N_s^{(j)} + c_{i,j} \int_{- \infty}^0 e^{- \alpha(t- s)} d n_s^{(j)}, \quad 1 \le i,j \le M. $$
The intensity can be expressed in terms of sums of these Markovian processes that is, for all $1 \le i \le M$
$$\lambda_t^{(i)} = f_i \left(\sum_{j = 1}^M Y_{t^-}^{(i j)}\right), \qquad \mbox{with } f_i(x) = \zeta_i + x.$$
We remark that all the point processes $N^{(j)}$ behave as homogeneous Poisson processes with constant intensity $\zeta_j$, before the first occurrence. Then, as soon as the first occurrence appears for a particular $N^{(i)}$, it affects all the process increasing the conditional intensity through the interaction functions $h_{i,j}$.
\\ 

Let us emphasized that from the work \cite{DLL}, it is possible to not assume the positiveness of the coefficients $c_{i,j}$, taking then $f_i(x)=(\zeta_i + x)_+$. This is particularly important for the neuronal applications where the neurons can have \textit{excitatory or inhibitory} behavior.

\subsection{Model Assumptions}
In this work we consider the following jump-diffusion model.
We write the process as $M+1$ stochastic differential equations:
\begin{equation}
\begin{cases}
d\lambda^{(i)}_t = - \alpha (\lambda^{(i)}_t - \zeta_i) dt + \sum_{j = 1}^M c_{i,j} dN^{(j)}_t, \quad i = 1,\ldots , M \\
dX_t = b(X_t) dt + \sigma(X_t) dW_t + a(X_{t^-}) \sum_{j = 1}^M dN^{(j)}_t,
\end{cases}
\label{eq: model}
\end{equation}
with $\lambda^{(j)}_0$ and $X_0$ random variables independent of the others. In particular, $(\lambda^{(1)}_t,\ldots , \lambda^{(M)}_t, X_t)$ is a Markovian process for the general filtration
$$\mathcal{F}_t := \sigma(W_s, N_s^{(j)}, \quad j=1,\ldots , M, \quad 0 \le s \le t).$$
We remark that the process $N_t^{(j)}$ has jumps of size 1.
We aim at estimating, in a non-parametric way, the volatility $\sigma$ and the jump coefficient $a$ starting from a discrete observation of the process $X$.
The process $X$ is indeed observed at high frequency on the time interval $[0, T_n]$. For $0 = t_0 \le t_1 \le\ldots \le t_n = T_n$, the observations are denoted as $X_{t_i}$. We define $\Delta_{n,i} := t_{i + 1} - t_i$ and $\Delta_n := \sup_{i =0,\ldots , n} \Delta_{n,i}$. We are here assuming that $\Delta_n \rightarrow 0$ and $n \Delta_n \rightarrow \infty$, for $n \rightarrow \infty$. We suppose that there exists $c_1$, $c_2$ such that, $\forall i \in \left \{ 0,\ldots , n-1 \right \}$, $c_1 \Delta_{min} \le \Delta_{n,i} \le c_2 \Delta_n$. We remark we are considering a general case, where the discretization step is not necessarily uniform. However, in case where the discretization step is uniform we clearly have $\Delta_n = \Delta_{n,i}$ for any $i \in \left \{ 0, ... , n-1 \right \}$ (which implies that the condition here above is clearly respected with $c_1 = c_2 = 1$) and the time horizon becomes $T_n = n \Delta_n$.
 Furthermore, we require that
\begin{equation}
\log n = o(\sqrt{n \Delta_n}).
\label{eq: cond delta}
\end{equation}
The size parameter $M$ is fixed and finite all along, and asymptotic properties are obtained when
$T \rightarrow \infty$. \\
Requiring that the size of the discretization step is always the same, as we do asking that the maximal and minimal discretization steps differ only on a constant, is a pretty classical assumption in our framework. On the other side, the step conditions \eqref{eq: cond delta} is more technical. This condition is replaced with a stronger one to obtain Theorem \ref{th: estim both adaptive} (see Assumption \ref{ass: step} and the discussion below).

\begin{ass}[Assumptions on the coefficients of $X$]\label{ass: X}
\begin{enumerate}
    \item The coefficients $b$ and $\sigma$ are of class $\mathcal{C}^2$ and there exists a  positive constants $c$ such that, for all $x \in \mathbb{R}$, $|b'(x)| + |\sigma'(x)| + |a'(x)|  \le c$. 
    \item There exist positive constants $a_1$ and $\sigma_1$ such that $|a(x)| < a_1$ and $0 < \sigma^2(x) < \sigma^2_1$ for all $x\in \mathbb{R}$.
    \item There exist positive constants $c', q$ such that, for all $x \in \mathbb{R}$, $|b''(x)| + |\sigma''(x)| \le c'(1 + |x|^q )$.
    \item There exist $d \ge 0$ and $r > 0$ such that, for all $x$ satisfying $|x| > r$, we have $x b(x) \le - d x^2$.
\end{enumerate} 
\end{ass}

We remark that, as a consequence of Assumption 1, item 1, the coefficients $b$, $\sigma$ and $a$ are globally Lipschitz continuous.

 The first three assumptions ensure the existence of a unique solution $X$ (as proven in \cite{DLL} Proposition 2.3). The last assumption is introduced to study the longtime behavior of $X$ and to ensure its ergodicity (see \cite{DLL}). Note that the assumption on $a$ can be relaxed (see \cite{DLL}).

\begin{ass}[Assumptions on the kernels]\label{ass: kernel}
 \begin{enumerate}
    \item Let $H$ be a matrix such that \\$H_{i,j} := \int_0^{\infty} h_{i,j} (t) dt = {c_{ij}}/{\alpha}$, for $1 \le i,j\le M$. The matrix $H$ has a spectral radius smaller than 1.
    \item We suppose that $\sum_{j = 1}^M \zeta_j > 0$ and that the matrix $H$ is invertible. 
\end{enumerate} 
\end{ass}

The first point of the Assumption \ref{ass: kernel} here above implies that the process $(N_t)$ admits a version with stationary increments (see \cite{Bremaud}). In the sequel, we always will consider such an assumption satisfied. The process $(N_t)$ corresponds to the asymptotic limit and $(\lambda_t)$ is a stationary process. The second point of A2 is needed to ensure the positive Harris recurrence of the couple $(X_t, \lambda_t)$. A discussion about it can be found in Section 2.3 of \cite{Dion Lemler}.

\subsection{Ergodicity and moments}{\label{S: ergodicity}}
In the sequel, we repeatedly use the ergodic properties of the process $Z_t := (X_t, \lambda_t)$. From Theorem 3.6 in \cite{DLL} we know that, under Assumptions \ref{ass: X} and \ref{ass: kernel}, the process $(X_t, \lambda_t)_{t \ge 0}$ is positive Harris recurrent with unique invariant measure $\pi(dx)$. Moreover, in \cite{DLL}, the Foster-Lyapunov condition in the exponential frame implies that, for all $t \ge 0$, $\mathbb{E}[X_t
^4] < \infty$ (see Proposition 3.4). In the sequel we need $X$ to have arbitrarily big moments and, therefore, we propose a modified Lyapunov function. In particular, following the ideas in \cite{DLL}, we take $V : \mathbb{R} \times \mathbb{R}^{M \times M} \rightarrow \mathbb{R}_+$ such that
\begin{equation}
V(x,y):= |x|^m + e^{\sum_{i,j} m_{i j} | y^{(i j)}| }, 
\label{eq: potential}
\end{equation}
where $m \ge 2$ is a constant arbitrarily big and $m_{i j} := \frac{k_i}{\alpha}$, being $k \in \mathbb{R}^M_+$ a left eigenvector of $H$, which exists and has non-negative components under our Assumption \ref{ass: kernel} (see \cite{DLL} below Assumption 3.3). \\
We now introduce the generator of the process $\tilde{Z}_t := (X_t, Y_t)$, defined for sufficiently smooth test function $g$ by
\begin{equation}
A^{\tilde{Z}} g (x,y) = - \alpha \sum_{i,j=1}^M y^{(i j)} \partial_{y^{(i j)}} g (x,y) + \partial_x g(x,y) b(x) + \frac{1}{2} \sigma^2(x) \partial^2_x g(x,y)
\label{eq: def generator}
\end{equation}
$$ + \sum_{j = 1}^M f_j \left(\sum_{k = 1}^M y^{(j k)} \right) [g(x + a(x), y + \Delta_j) - g (x,y)],$$
with $(\Delta_j)^{(i l)} = c_{i, j} \one_{j = l}$, for all $1 \le i, l \le M$. Then, the following proposition holds true.
\begin{proposition}
Suppose that Assumptions 1 and 2 hold true. Let V be as in \eqref{eq: potential}. Then, there exist positive constants $d_1$ and $d_2$ such that the following Foster-Lyapunov type drift condition holds:
$$A^{\tilde{Z}} V \le d_1 - d_2 V.$$
Moreover, both $X$ and $\lambda$ have bounded moments of any order.
\label{prop: Lyapounov}
\end{proposition}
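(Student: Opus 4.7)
The approach is to split the Lyapunov function as $V = V_1 + V_2$ with $V_1(x) := |x|^m$ and $V_2(y) := \exp\bigl(\sum_{i,j} m_{ij}|y^{(ij)}|\bigr)$, estimate $A^{\tilde{Z}} V_1$ and $A^{\tilde{Z}} V_2$ separately, then combine. The drift inequality on $V$ together with a standard Dynkin/Gronwall argument will yield the moment bound.

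For $V_1$, which depends only on $x$, the decay operator $-\alpha \sum y^{(ij)}\partial_{y^{(ij)}}$ contributes zero. The diffusion part $b V_1' + \tfrac{1}{2}\sigma^2 V_1''$ yields, by Assumption \ref{ass: X} item 4, a dominant term $-dm|x|^m = -dmV_1$ for $|x|>r$ and a bounded remainder on $[-r,r]$; the $\sigma^2$ contribution is of order $|x|^{m-2}$ and is absorbed via Young's inequality $|x|^{m-2} \le \eta|x|^m + C_\eta$. The jump part $\sum_j f_j(\sum_k y^{(jk)})[|x+a(x)|^m - |x|^m]$ is handled using $\bigl||x+a(x)|^m - |x|^m\bigr| \le C(1+|x|^{m-1})$ (from $|a|\le a_1$) together with $f_j(\sum_k y^{(jk)}) \le C(1 + \sum_{i,k}|y^{(ik)}|) \le C(1 + \log V_2)$, where the last step uses $\log V_2 = \sum m_{ij}|y^{(ij)}|$ with strictly positive $m_{ij} = k_i/\alpha$. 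A further Young inequality then gives $|x|^{m-1}(1+\log V_2) \le \eta V_1 + C_\eta(1+\log V_2)^m$, so $A^{\tilde{Z}}V_1 \le -c_1 V_1 + C_\eta(1+\log V_2)^m + C$.

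For $V_2$, the diffusion terms vanish, while the decay operator contributes $-\alpha V_2 \sum_{i,j} m_{ij}|y^{(ij)}| = -\alpha V_2 \log V_2$. For the jump part, $V_2(y+\Delta_j)/V_2(y) = \exp\bigl(\sum_i m_{ij}(|y^{(ij)}+c_{ij}|-|y^{(ij)}|)\bigr)$; in the positive-coefficient case $y^{(ij)}, c_{ij}\ge 0$ this equals $\exp(\sum_i m_{ij}c_{ij})$, and the left-eigenvector identity $k^\top H = \mu k^\top$ with $\mu = \rho(H) < 1$ gives $\sum_i m_{ij}c_{ij} = \mu k_j$. Combining jump and decay, the $y$-linear piece reduces to $V_2 \sum_{i,j} y^{(ij)}\bigl[(e^{\mu k_i}-1) - k_i\bigr]$ (up to a bounded $\zeta$-term times $V_2$); after rescaling the left eigenvector $k$ so that its components are small enough, the Taylor bound $e^{\mu t}-1 \le \mu t + \mu^2 t^2$ yields $(e^{\mu k_i}-1) - k_i \le -\tfrac{1-\mu}{2}k_i$. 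Recalling $\sum k_i y^{(ij)} = \alpha \log V_2$, one obtains $A^{\tilde{Z}} V_2 \le C_0 V_2 - c_2 V_2 \log V_2$.

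The two bounds are merged via the key observation that $V_2 \log V_2$ super-polynomially dominates any power of $\log V_2$: since $V_2/(\log V_2)^{m-1}\to\infty$ as $V_2\to\infty$, one has $(1+\log V_2)^m \le \varepsilon V_2 \log V_2 + C_\varepsilon$ for every $\varepsilon >0$. Choosing $\varepsilon$ small enough absorbs the nuisance term of step one, and using the elementary bound $V_2 \log V_2 \ge V_2 - e$, one concludes $A^{\tilde{Z}}V \le d_1 - d_2 V$. For the moment part, applying Dynkin's formula to $V \wedge N$ and passing to the limit gives $u(t):=\mathbb{E}V(\tilde{Z}_t)$ satisfying $u'(t) \le d_1 - d_2 u(t)$, hence $u(t) \le u(0) + d_1/d_2$ uniformly in $t$. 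Since $V \ge |x|^m$ for arbitrarily large $m$, and since expanding the exponential gives $V \ge (m_0 \sum|y^{(ij)}|)^k/k!$ for every $k$, all polynomial moments of $X_t$ and $Y_t$ are uniformly bounded in $t$; the identity $\lambda_t^{(i)} = f_i(\sum_j Y_t^{(ij)})$ transfers this to $\lambda$. The main obstacle throughout is the cross-term $|x|^{m-1}\log V_2$ produced by the jumps acting on $V_1$: controlling it is possible only because the exponential structure of $V_2$ provides a drift of order $V_2\log V_2$, which dominates $(\log V_2)^m$ for every $m$.
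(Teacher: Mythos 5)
Your proof is correct and follows the same overall splitting $V=V_1+V_2$ and the same treatment of the diffusion part of $V_1$ (drift condition giving $-dm|x|^m$, Young to absorb $|x|^{m-2}$), but it diverges from the paper on the Hawkes part. Where the paper cites Proposition~4.5 of \cite{8 in DLL} to obtain $A_2^{\tilde Z}V_2\le -c_1V_2+c_2\one_{K_1}$ directly and then merges by noting that $\bar f(y)\le\bar c+\tilde c\log V_2(y)$ raised to any power is $o(V_2)$, you rederive the Hawkes drift condition from scratch, exploiting the explicit exponential structure (the left-eigenvector identity $\sum_i m_{ij}c_{ij}=\mu k_j$, the Taylor bound $e^{\mu k_i}-1-k_i\le -\tfrac{1-\mu}{2}k_i$ after rescaling $k$, and $\sum_{i,j}k_iy^{(ij)}=\alpha\log V_2$) to obtain the sharper bound $A^{\tilde Z}V_2\le C_0V_2-c_2V_2\log V_2$. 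Your derivation is self-contained and transparent about where the contraction $\rho(H)<1$ enters, at the cost of being restricted to the positive-coefficient, $y^{(ij)}\ge 0$ regime (which is the one the paper also works in, given $c_{ij}>0$); the superlinear $V_2\log V_2$ decay you obtain is stronger than strictly needed, since the paper's merging only requires $(\log V_2)^q=o(V_2)$. For the moment bound you use Dynkin plus Gronwall to get $\sup_t\E V(\tilde Z_t)<\infty$, whereas the paper appeals to finiteness of $\int V\,d\pi$ and stationarity; both are standard consequences of the Foster--Lyapunov inequality and give the same conclusion once one notes that $\lambda$ is an affine function of $Y$.

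One small point worth flagging: your rescaling of $k$ changes $m_{ij}=k_i/\alpha$ and hence redefines $V_2$; this is legitimate since the statement allows any left eigenvector, but you should say explicitly that $V_2$ is defined with respect to the already-rescaled $k$, so the scaling is fixed once and for all before the drift computation rather than adjusted afterwards. Also, the positivity $k_i>0$ (needed both for $m_{ij}>0$ and for the strict negativity $(e^{\mu k_i}-1)-k_i<0$) follows from Perron--Frobenius since $H$ has strictly positive entries under the paper's exponential-kernel setup; it is worth stating rather than leaving implicit.
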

Proposition \ref{prop: Lyapounov} is proven in the Appendix. Let us now add the third assumption.

\begin{ass}\label{ass: statio}
$(X_0, \lambda_0)$ has probability $\pi$.
\end{ass}

Then, the process $(X_t, \lambda_t)_{t \ge 0}$ is in its stationary regime.

We recall that the process $Z$ is called $\beta$ - mixing if $\beta_Z (t) = o(1)$ for $t \rightarrow \infty$ and exponentially $\beta$ - mixing if there exists a constant $\gamma_1 > 0$ such that $\beta_Z (t) = O(e^{- \gamma_1 t})$ for $t \rightarrow \infty$, where $\beta_Z$ is the $\beta$ - mixing coefficient of the process $Z$ as defined for a Markov process $Z$ with transition semigroup $(P_t)_{t \in \mathbb{R}^+}$, by
\begin{equation}
\beta_Z (t) :=  \int_{\mathbb{R} \times \mathbb{R}^M} \left \| P_t (z, .) - \pi \right \| \pi (d z),
\label{eq: def beta}    
\end{equation}
where $\left \| \lambda \right \|$ stands for the total variation norm of a signed measure $\lambda$.

Moreover, it is 
$$\beta_X (t) :=  \int_{\mathbb{R} \times \mathbb{R}^M} \left \| P_t^1 (z, .) - \pi^X \right \| \pi (d z),$$
where $P_t^1 (z, .)$ is the projection on $X$ of $P_t (z, .)$ such that $P_t^1 (z, dx) := P_t (z, dx \times \mathbb{R}^M)$ and $\pi^X(dx):= \pi(dx \times \mathbb{R}^M)$ is the projection of $\pi$ on the coordinate $X$
(which exists, see Theorem 2.3 in \cite{Dion Lemler} and proof in \cite{DLL}). Then, according to Theorem 4.9 in \cite{DLL}, under A1-A3 the process $Z_t := (X_t, \lambda_t)$ is exponentially $\beta$-mixing and there exist some constant $K, \gamma > 0$ such that $$\beta_X (t) \le \beta_Z (t) \le K e^{- \gamma t}.$$

	Furthermore, from Proposition 3.7 in \cite{DLL}, we know that the measure $\pi^X(d x)$ admits a Lebesgue density $x\mapsto \pi^X(x)$ and it is lower bounded on each compact set of $\R$. In the following lemma we additionally prove that this density is also upper bounded on each compact set.
	\begin{lemma} 
		Assume that Assumptions 1 and 2 hold true. Then, for any compact set $K$ of $\mathbb{R}$, there exists a constant $C_K>0$ such that $\pi^X(x)\le C_K$ for all $x\in K$.	
		\label{lemma: boundpiX}
	\end{lemma}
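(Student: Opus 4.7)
The plan is to show $\pi^X(A)\le C_K\,|A|$ for every Borel $A\subset K$ and then conclude pointwise via Lebesgue differentiation. By stationarity (Assumption \ref{ass: statio}), for any fixed $s>0$,
\[
\pi^X(A)=\P_\pi(X_s\in A)=\int \pi(dz_0)\, P_s(z_0, A\times \R^M),\qquad z_0=(x_0,\lambda_0).
\]
The first task is to show that the projected transition kernel $P_s(z_0, \cdot\times \R^M)$ admits a Lebesgue density $q_s(z_0,\cdot)$. Decomposing by the number $k$ of jumps of $N$ in $[0,s]$, on $\{N_s-N_0=k\}$ the path of $X$ is a concatenation of segments of the jump-free SDE $dY=b(Y)dt+\sigma(Y)dW$, separated by deterministic shifts $a(X_{\tau_i^-})$ at the jump times $\tau_1<\dots<\tau_k$. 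Since $\sigma^2>0$ each diffusion piece has a transition density, and integrating over the (random) jump times and marks produces a density $q_s(z_0,\cdot)$ for $X_s$.

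The heart of the argument is a bound $q_s(z_0,y)\le \Phi(z_0)$ for all $y\in K$, with $\Phi$ being $\pi$-integrable. I would enlarge $K$ to a compact $K'$ containing $K$ in its interior; by continuity and strict positivity of $\sigma^2$, $\inf_{K'}\sigma^2=:\sigma_0^2>0$, so on $K'$ the diffusion is uniformly elliptic. Standard parametrix/Aronson estimates then yield a Gaussian bound $p^c_s(x_0,y)\le c\,s^{-1/2}\exp(-c'(y-x_0)^2/s)$ for the jump-free transition density, valid for $y\in K$ and $s$ small. Summing over $k\ge 0$ (with a factor $(s\lambda^{\mathrm{tot}})^k/k!$ type contribution coming from the Hawkes jump structure) and using that the counts $N_s-N_0$ have finite exponential moments by Proposition \ref{prop: Lyapounov}, one obtains
\[
q_s(z_0,y)\le C_s\,\exp\!\bigl(-c'(y-x_0)^2/s\bigr)\,\Psi(\lambda_0), \qquad y\in K,
\]
with $\Psi$ of polynomial growth. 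Integrating the right-hand side against $\pi(dz_0)$ is finite because the Gaussian factor is bounded in $x_0$ (and integrable against $\pi^X$), and $\Psi$ is $\pi$-integrable thanks to the bounded moments granted by Proposition \ref{prop: Lyapounov}. This gives $\int q_s(z_0,y)\,\pi(dz_0)\le C_K$ uniformly for $y\in K$, hence the claim.

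The main obstacle is precisely Step~3: obtaining a tractable upper bound for $q_s(z_0,y)$ uniformly in $z_0$, because $\sigma$ is not globally uniformly elliptic and the intensity itself is random. The remedy is a localization argument, splitting the contribution of $z_0$ according to whether $x_0\in K'$ or not; for $x_0\notin K'$ one uses the confinement $x\,b(x)\le -d\,x^2$ (Assumption \ref{ass: X}, item~4) together with exit-time estimates of the pre-jump diffusion from $K'$, so that the contribution decays in $|x_0|$ and is absorbed by the tail integrability of $\pi^X$ from Proposition \ref{prop: Lyapounov}. Controlling the superposition of jump contributions analogously relies on the exponential moment bounds for the Hawkes counts provided by the same Foster--Lyapunov drift condition.
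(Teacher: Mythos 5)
Your approach is genuinely different from the paper's, and the difference matters: you condition on the \emph{number} of jumps in $[0,s]$ and try to sum the resulting densities, whereas the paper conditions on the \emph{time of the last jump} $L_t$ before $t$ and uses the representation
\[
\pi^X(x)=\int \pi(dz)\,\E_z\!\left[p_{t-L_t}(X_{L_t},x)\right],
\]
(from Proposition 3.7 of the cited reference), where $(p_s)$ is the transition density of the jump-free diffusion. With that representation, only a single diffusion segment ever contributes a density, so the whole problem reduces to bounding $\E_z\bigl[(t-L_t)^{-1/2}\bigr]$, which the paper handles via the identity $(t-L_t)^{-1/2}=\tfrac12\int_0^t \mathbf 1_{\{s\le L_t\}}(t-s)^{-3/2}\,ds+t^{-1/2}$ and a linear-in-$(t-s)$ tail bound $\P_z(L_t\ge s)\le c(1+|\lambda_0(z)|)(t-s)$. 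That is both cleaner and complete.

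The gap in your version is precisely the step you flag yourself and then only gesture at. Summing over $k$ jumps, the ``$(s\lambda^{\mathrm{tot}})^k/k!$ type'' factor is not available: the Hawkes intensity is random, unbounded, and self-exciting, so the conditional law of the number of jumps in $[0,s]$ is not Poisson with a deterministic rate, and the cascading intensity at each jump cannot be absorbed into a simple Borel summation by citing exponential moments. Moreover, each jump translates $X$ by $a(X_{\tau_i^-})$, so on $\{N_s-N_0=k\}$ the density of $X_s$ is a $k$-fold marginalization over random jump times, random positions at those times, and concatenated heat kernels with random shifts in between; asserting that this still obeys a single Gaussian bound $C_s\exp(-c'(y-x_0)^2/s)\Psi(\lambda_0)$ is exactly what needs proof and is not a corollary of Aronson estimates for the jump-free piece. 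The paper's last-jump decomposition sidesteps all of this, which is why it goes through. A secondary point: the Gobet estimate used in the paper already carries the factor $e^{Csu^2}$ that handles non-compact starting points, so the localization by $x_0\in K'$ vs.\ $x_0\notin K'$ you propose is not needed once you have that bound; the real burden is controlling the time since the last jump (or, in your framing, the sum over $k$), and that is where your argument is incomplete.
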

Finally, we know that for each compact set $A \subset \R$ there exist two positive constants $\pi_0, \pi_1$ such that for $x\in A$ we have
\begin{equation}
    \label{eq:controlpi}
    0<\pi_0 \leq \pi_X(x) \leq \pi_1.
\end{equation}
Let us define the norm with respect to $\pi_X$:
$$\left \| t \right \|^2_{\pi^X} := \int_A t^2(x) \pi_X (dx).$$ 
According to Lemma \ref{lemma: boundpiX} it yields that for a deterministic function $t$,
$\pi_0 \|t\|^2 \leq \|t\|_{\pi^X} \leq \pi_1 \|t\|^2$. 

\section{Estimation procedure of the volatility function}{\label{S: volatility}}
\label{sec:volatility}
With the background introduced in the previous sections, we are now ready to estimate the volatility function to whom this section is dedicated. We remind the reader that the procedure is based on the observations 
$(X_{t_i})_{i=1, \ldots, n}.$

First of all, in Subsection \ref{S: non adaptive volatility}, we propose a non-adaptive estimator based on the squared increments of the process $X$. To do that, we decompose such increments in several terms, aimed to isolate the volatility function. Regarding the other terms, we can recognize a bias term (which we will show being small), the contribution of the Brownian part (which is centered), and the jumps' contribution. To make the latter small as well, we introduce a truncation function (see Lemma \ref{lemma: esp sauts} below). Thus, we can define a contrast function based on the truncated squared increments of $X$ and the associated estimator of the volatility. In Proposition \ref{prop: volatility}, which is the main result of this subsection, we prove a bound for the empirical risk of the volatility estimator we propose.

As the presented estimator depends on the model, in Subsection \ref{S: adaptive volatility} we introduce a fully data-driven procedure to select the best model automatically in the sense of the empirical risk. We choose the model such that it minimizes the sum between the contrast and a penalization function, as explained in \eqref{eq: m1}. In Theorem \ref{th: vol adaptive} we show that the estimator associated with the selected model realizes the best compromise between automatically the bias term and the penalty term.

\subsection{Non-adaptive estimator}{\label{S: non adaptive volatility}}

Let us consider the increments of the process $X$ as follows:
\begin{eqnarray}
X_{t_{i + 1}} - X_{t_i} &=& \int_{t_i}^{t_{i + 1}} b(X_s) ds + \int_{t_i}^{t_{i + 1}} \sigma(X_s) dW_s + \int_{t_i}^{t_{i + 1}} a(X_{s^-}) \sum_{j = 1}^M dN_s^{(j)} \nonumber\\
&=& \int_{t_i}^{t_{i + 1}} b(X_s) ds + Z_{t_i} + J_{t_i}
\label{eq: introduction notation}
\end{eqnarray}
where $Z,J$ are given in Equation \eqref{eq: ZJ}:
\begin{equation}\label{eq: ZJ}
Z_{t_i} := \int_{t_i}^{t_{i + 1}} \sigma(X_s) dW_s, \qquad  J_{t_i} := \int_{t_i}^{t_{i + 1}} a(X_{s^-}) \sum_{j = 1}^M dN_s^{(j)}.
\end{equation}
To estimate $\sigma^2$ for a diffusion process (without jumps), the idea is to consider the random
variables $T_{t_i} := \frac{1}{\Delta_n} (X_{t_{i + 1}} - X_{t_i})^2$. Following this idea, we decompose $T_{t_i}$, in order to isolate the contribution of the volatility computed in $X_{t_i}$. In particular, Equation \eqref{eq: introduction notation} yields
\begin{equation}\label{eq: Tipoursigma}
T_{t_i} = \frac{1}{\Delta_n} (X_{t_{i + 1}} - X_{t_i})^2=  \sigma^2(X_{t_i}) +  \bar{A}_{t_i} + B_{t_i} + E_{t_i},
\end{equation}
where $A,B,E$ are functions of $Z,J$:
\begin{eqnarray*} \bar{A}_{t_i} &:=&\frac{1}{\Delta_n} \left(\int_{t_i}^{t_{i + 1}} b(X_s) ds\right)^2 + \frac{2}{\Delta_n}(Z_{t_i} + J_{t_i})\int_{t_i}^{t_{i + 1}} (b(X_s) - b(X_{t_i})) ds
\\
&&+ \frac{1}{\Delta_n} \int_{t_i}^{t_{i + 1}} (\sigma^2(X_s) - \sigma^2(X_{t_i})) ds + 2 b(X_{t_i}) Z_{t_i},  \end{eqnarray*}
\begin{equation}
B_{t_i} :=  \frac{1}{\Delta_n} [Z_{t_i}^2 - \int_{t_i}^{t_{i + 1}} \sigma^2(X_s) ds];
\label{eq: def B}
\end{equation}
$$E_{t_i} := 2 b(X_{t_i}) J_{t_i} + \frac{2}{\Delta_n} Z_{t_i} J_{t_i} + \frac{1}{\Delta_n}J_{t_i}^2.$$
The term $\bar{A}_{t_i}$ is small, whereas $B_{t_i}$ is centered. In order to make $E_{t_i}$ small as well, we introduce the truncation function $\varphi_{\Delta_{n,i}^\beta}(X_{t_{i + 1}} - X_{t_i})$, for $\beta \in (0, \frac{1}{2})$. It is a version of the indicator function, such that $\varphi(\zeta) = 0$ for each $ \zeta$, with $|\zeta| \ge 2$ and $\varphi(\zeta) = 1$ for each $ \zeta $, with $ |\zeta| \le 1$. Also, we define $\varphi_{z}(.):=\varphi(. / z), z>0$. The idea is to use the size of the increment of the process  $ \Delta_i X := X_{t_{i+1}} - X_{t_i}$ in order to judge if a jump occurred or not in the interval $[t_i, t_{i + 1})$.
As it is hard for the increment of $X$ with continuous transition to overcome the threshold $\Delta_{n,i}^\beta$ for $\beta {\modchi <} \frac{1}{2}$, we can assert the presence of a jump in $[t_i, t_{i + 1})$ if $|X_{t_{i+1}} - X_{t_i}| > \Delta_{n,i}^\beta $.
Hence, we consider the random variables
$$T_{t_i} \varphi_{\Delta_{n,i}^\beta}(\Delta_i X) = \sigma^2(X_{t_i}) + \tilde{A}_{t_i} + B_{t_i} + E_{t_i}\varphi_{\Delta_{n,i}^\beta}(\Delta_i X),$$
with
$$\tilde{A}_{t_i} := \sigma^2(X_{t_i}) (\varphi_{\Delta_{n,i}^\beta}(\Delta_i X) - 1) +  \bar{A}_{t_i} \varphi_{\Delta_{n,i}^\beta}(\Delta_i X) + B_{t_i}(\varphi_{\Delta_{n,i}^\beta}(\Delta_i X) - 1).$$ 
\\
Now, the just-introduced $\tilde{A}_{t_i}$ is once again a small term, because so $ \bar{A}_{t_i}$ was and because the truncation function does not differ a lot from the indicator function, as better justified in Lemma \ref{lemma: proba varphi -1} below. \\
\\
In the sequel, the constant $c$ may change the value from line to line.
\begin{lemma}
Suppose that Assumptions 1,2,3 hold. Then, for $\beta \in (0, \frac{1}{2})$ and for any $k \ge 1$, 
$$\mathbb{E}\left[\left|\varphi_{\Delta_{n,i}^\beta}(\Delta_i X) - 1\right|^k\right]   \le c \Delta_{n,i}.$$
\label{lemma: proba varphi -1}
\end{lemma}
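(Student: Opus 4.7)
The function $\varphi$ takes values in $[0,1]$ and equals $1$ on $[-1,1]$, so $\varphi_{\Delta_{n,i}^\beta}(\Delta_i X)=1$ whenever $|\Delta_i X|\le \Delta_{n,i}^\beta$. Since in addition $|\varphi_{\Delta_{n,i}^\beta}(\Delta_i X)-1|\le 1$, for every $k\ge 1$ one has
\[
|\varphi_{\Delta_{n,i}^\beta}(\Delta_i X)-1|^k\;\le\;\one_{\{|\Delta_i X|>\Delta_{n,i}^\beta\}},
\]
so the lemma reduces to proving the tail bound $\P(|\Delta_i X|>\Delta_{n,i}^\beta)\le c\,\Delta_{n,i}$.

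To establish this I would split on whether the Hawkes process jumps in the interval. Setting $J_n^i := \sum_{j=1}^M (N_{t_{i+1}}^{(j)}-N_{t_i}^{(j)})$, Markov's inequality together with stationarity (Assumption \ref{ass: statio}) and the moment control on $\lambda$ from Proposition \ref{prop: Lyapounov} immediately gives
\[
\P(J_n^i\ge 1)\;\le\;\E\!\left[\int_{t_i}^{t_{i+1}}\sum_{j=1}^M\lambda_s^{(j)}\,ds\right]\;\le\;c\,\Delta_{n,i}.
\]
On the complementary event $\{J_n^i=0\}$ the process $X$ has no jump on $[t_i,t_{i+1}]$, so $\Delta_i X = \int_{t_i}^{t_{i+1}} b(X_s)\,ds + \int_{t_i}^{t_{i+1}}\sigma(X_s)\,dW_s$. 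I would then apply Markov's inequality at exponent $q$; combining Burkholder--Davis--Gundy for the Brownian part, the boundedness of $\sigma$ (Assumption \ref{ass: X}) and the linear growth of $b$ together with the uniform-in-$s$ moment bounds of Proposition \ref{prop: Lyapounov} yields
\[
\P(|\Delta_i X|>\Delta_{n,i}^\beta,\,J_n^i=0)\;\le\;c_q\bigl(\Delta_{n,i}^{q(1-\beta)}+\Delta_{n,i}^{q(1/2-\beta)}\bigr).
\]

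Because $\beta<1/2$ and because both $X$ and $\lambda$ admit moments of every order, I can pick $q$ large enough that $q(\tfrac12-\beta)\ge 1$, which delivers the desired $O(\Delta_{n,i})$ contribution and closes the estimate. The only point that needs care is the uniformity in $s$ of the moment bounds on $X_s$ and $\lambda_s^{(j)}$; this is precisely what the Foster--Lyapunov estimate of Proposition \ref{prop: Lyapounov} combined with the stationarity assumption provides, so no genuine obstacle is expected beyond keeping track of the exponents.
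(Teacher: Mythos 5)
Your proposal is correct and follows essentially the same route as the paper: both reduce the left-hand side to the tail probability $\P(|\Delta_i X|>\Delta_{n,i}^\beta)$, bound the event that the Hawkes process jumps in $[t_i,t_{i+1}]$ by $c\,\Delta_{n,i}$ via Markov's inequality on the counting process and the bounded intensity moments, and handle the no-jump event through a high-moment Markov bound on the continuous increment, using $\beta<\tfrac12$ to choose the exponent large enough. The paper merely organizes the splitting slightly differently (first on the size of $J_{t_i}$, then on the number of jumps via the set $N_{i,n}$), but the estimates involved are the same as yours.
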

The proof of Lemma \ref{lemma: proba varphi -1} can be found in the Appendix. The same is for the proof of Lemma \ref{lemma: esp sauts} below, which illustrates the reason why we have introduced a truncation function. Indeed, without the presence of $\varphi$, the same Lemma would have held with just a $c \, \Delta_{n,i}$ on the right-hand side. Filtering the contribution of the jumps, we can gain an extra $\Delta_{n,i}^{\beta q}$ which, as we will see in Proposition \ref{lemma: size A B E}, will make the contribution of $E_{t_i}$ small.
\begin{lemma}
Suppose that Assumptions 1,2,3 hold. Then, for $q \ge 1$, for $\beta \in (0, \frac{1}{2})$ and for any $k \ge 1$
$$\mathbb{E}\left[|J_{t_i}|^q \varphi^k_{\Delta_{n,i}^\beta}(\Delta_i X)\right] \le c \Delta_{n,i}^{1 + \beta q}.$$
\label{lemma: esp sauts}
\end{lemma}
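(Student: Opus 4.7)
The plan is to exploit two complementary observations: on $\{\varphi_{\Delta_{n,i}^\beta}(\Delta_i X) \ne 0\}$ the increment $\Delta_i X$ is at most $2\Delta_{n,i}^\beta$, and on $\{M_i = 0\}$ (where $M_i := \sum_{j=1}^M (N^{(j)}_{t_{i+1}} - N^{(j)}_{t_i})$ is the total number of jumps in $[t_i,t_{i+1})$) we have $J_{t_i} = 0$. The factor $\Delta_{n,i}^{\beta q}$ will come from the truncation, while the extra $\Delta_{n,i}$ will come from the smallness of $\mathbb{P}(M_i \ge 1)$.

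First, I would write, using the decomposition \eqref{eq: introduction notation},
\begin{equation*}
J_{t_i} = (X_{t_{i+1}} - X_{t_i}) - \int_{t_i}^{t_{i+1}} b(X_s)\,ds - Z_{t_i}.
\end{equation*}
On $\{\varphi_{\Delta_{n,i}^\beta}(\Delta_i X) \ne 0\}$ the first term is bounded in absolute value by $2\Delta_{n,i}^\beta$, hence
\begin{equation*}
|J_{t_i}| \le 2\Delta_{n,i}^\beta + \Bigl|\int_{t_i}^{t_{i+1}} b(X_s)\,ds\Bigr| + |Z_{t_i}|.
\end{equation*}
Combining this with the trivial identity $J_{t_i} \varphi^k = J_{t_i}\varphi^k \mathbb{1}_{M_i \ge 1}$, I obtain the pointwise inequality
\begin{equation*}
|J_{t_i}|^q \varphi^k_{\Delta_{n,i}^\beta}(\Delta_i X) \;\le\; C_q \Bigl(\Delta_{n,i}^{\beta q} + \Bigl|\int_{t_i}^{t_{i+1}} b(X_s)\,ds\Bigr|^q + |Z_{t_i}|^q\Bigr)\mathbb{1}_{M_i \ge 1}.
\end{equation*}

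Next, I would take expectation and bound the three resulting terms. The simplest term is $\Delta_{n,i}^{\beta q}\,\mathbb{P}(M_i \ge 1)$: by stationarity and Assumption~2 (with Proposition~1), $\mathbb{E}[\lambda_s^{(j)}]$ is finite and constant in $s$, so $\mathbb{E}[M_i] \le c\,\Delta_{n,i}$ and Markov gives $\mathbb{P}(M_i \ge 1) \le c\,\Delta_{n,i}$. For the drift term, Jensen's inequality and the moment bounds on $b(X_s)$ (from Proposition~\ref{prop: Lyapounov} plus the Lipschitz character of $b$) yield $\mathbb{E}[|\int b|^{2q}] \le c\,\Delta_{n,i}^{2q}$, so Cauchy--Schwarz with $\mathbb{P}(M_i \ge 1)^{1/2} \le c\,\Delta_{n,i}^{1/2}$ produces $c\,\Delta_{n,i}^{q+1/2} \le c\,\Delta_{n,i}^{1+\beta q}$ (the last inequality since $q \ge 1$ and $\beta < 1/2$ imply $q(1-\beta)\ge 1/2$).

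The most delicate term is $\mathbb{E}[|Z_{t_i}|^q \mathbb{1}_{M_i \ge 1}]$, which is the main obstacle: a naive Cauchy--Schwarz only gives $\Delta_{n,i}^{(q+1)/2}$, insufficient when $q$ is small. The remedy is Hölder with conjugate exponents $(p,p')$ tuned to $\beta$. Specifically, for $q < 2/(1-2\beta)$ I would take $p = 1/(q(1/2-\beta))>1$ and $p' = 1/(1 - q(1/2-\beta))$, so that BDG gives $\mathbb{E}[|Z_{t_i}|^{qp}]^{1/p} \le c\,\Delta_{n,i}^{q/2}$ and $\mathbb{P}(M_i\ge 1)^{1/p'}\le c\,\Delta_{n,i}^{1-q(1/2-\beta)}$; multiplying yields exactly $c\,\Delta_{n,i}^{1+\beta q}$. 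For $q \ge 2/(1-2\beta)$ one does not even need the indicator, since $\mathbb{E}[|Z_{t_i}|^q] \le c\,\Delta_{n,i}^{q/2} \le c\,\Delta_{n,i}^{1+\beta q}$ directly. Summing the three contributions gives the announced bound $c\,\Delta_{n,i}^{1+\beta q}$.
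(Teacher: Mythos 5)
Your argument is correct: the pointwise bound $|J_{t_i}|^q\varphi^k_{\Delta_{n,i}^\beta}(\Delta_i X)\le C_q\bigl(\Delta_{n,i}^{\beta q}+\bigl|\int_{t_i}^{t_{i+1}}b(X_s)ds\bigr|^q+|Z_{t_i}|^q\bigr)\one_{\{M_i\ge 1\}}$ is valid (on $\{\varphi\neq 0\}$ one has $|\Delta_i X|\le 2\Delta_{n,i}^\beta$, and $M_i=0$ forces $J_{t_i}=0$), the bound $\mathbb{P}(M_i\ge 1)\le c\Delta_{n,i}$ follows from the compensator formula and the stationary moments of $\lambda$, and your exponent bookkeeping checks out: $q+\tfrac12\ge 1+\beta q$ for the drift since $q(1-\beta)>\tfrac12$, and the H\"older choice $p=1/(q(\tfrac12-\beta))$ (valid, $p>1$, precisely when $q<2/(1-2\beta)$, with the complementary regime handled by dropping the indicator) yields exactly $\Delta_{n,i}^{q/2+1-q(1/2-\beta)}=\Delta_{n,i}^{1+\beta q}$ for the Brownian term. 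The route is genuinely different from the paper's. The paper splits on the size of the jump part itself, $\{|J_{t_i}|>3\Delta_{n,i}^\beta\}$ versus $\{|J_{t_i}|\le 3\Delta_{n,i}^\beta\}$: on the big-jump event the truncation forces $|\Delta_i X^c|>\Delta_{n,i}^\beta$, whose probability is $O(\Delta_{n,i}^{r(1/2-\beta)})$ for arbitrary $r$, and moments of $J_{t_i}$ are controlled by Kunita's inequality; on the small-jump event it bounds $|J_{t_i}|^q\le c\Delta_{n,i}^{\beta q}$ directly and multiplies by the no-jump-event probability $\mathbb{P}(N_{i,n}^c)\le c\Delta_{n,i}$. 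You instead eliminate $J_{t_i}$ altogether via the triangle inequality on the truncation event, so you never need Kunita's inequality nor the arbitrary-power tail estimate for the continuous increment; the price is the delicate $\beta$-tuned H\"older step on $\mathbb{E}[|Z_{t_i}|^q\one_{\{M_i\ge1\}}]$, which the paper's crude-but-arbitrary-power estimates avoid. Both proofs ultimately rest on the same two facts — the truncation caps $|\Delta_i X|$ at $2\Delta_{n,i}^\beta$, and the probability of at least one jump in the interval is $O(\Delta_{n,i})$ — so your version is an acceptable, somewhat more elementary and self-contained alternative.
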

From Lemmas \ref{lemma: proba varphi -1} and \ref{lemma: esp sauts} here above, it is possible to prove the following proposition. Also, its proof can be found in the Appendix.
\begin{proposition}
Suppose that Assumptions 1,2,3 hold. Then, for $\beta\in(\frac{1}{4},\frac{1}{2})$,
\begin{enumerate}
\item $\forall \tilde{\varepsilon} > 0$, $\mathbb{E}[\tilde{A}_{t_i}^2] \le c \Delta_{n,i}^{1 - \tilde{\varepsilon}},  \qquad \mathbb{E}[\tilde{A}_{t_i}^4] \le c \Delta_{n,i}^{1 - \tilde{\varepsilon}};$
\item $\mathbb{E}[B_{t_i}|\mathcal{F}_{t_i}] = 0, \qquad \mathbb{E}[B^2_{t_i}|\mathcal{F}_{t_i}] \le c \sigma_1^4, \qquad \mathbb{E}[B_{t_i}^4] \le c; $
\item $\mathbb{E}[|E_{t_i}|\varphi_{\Delta_{n,i}^\beta}(\Delta_i X)] = c \Delta_{n,i}^{2 \beta}, \quad \mathbb{E}[E^2_{t_i}\varphi_{\Delta_{n,i}^\beta}(\Delta_i X)] \le c \Delta_{n,i}^{4 \beta - 1}, \quad \mathbb{E}[E_{t_i}^4\varphi_{\Delta_{n,i}^\beta}(\Delta_i X)] \le c\Delta_{n,i}^{8 \beta - 3}.$
\end{enumerate}
\label{lemma: size A B E}
\end{proposition}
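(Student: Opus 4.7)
The plan is to treat the three items separately. Item 2 serves as the warm-up: applying It\^o's formula to $Z_s^2 := \bigl(\int_{t_i}^s \sigma(X_u)\,dW_u\bigr)^2$ yields the identity
\[
Z_{t_i}^2 - \int_{t_i}^{t_{i+1}} \sigma^2(X_s)\,ds \;=\; 2\int_{t_i}^{t_{i+1}} Z_s\,\sigma(X_s)\,dW_s,
\]
so that $\Delta_n B_{t_i}$ is a genuine martingale increment, giving the centering $\mathbb{E}[B_{t_i}\mid\mathcal{F}_{t_i}]=0$ at once. The conditional $L^2$-bound follows from It\^o's isometry together with $\mathbb{E}[Z_s^2\mid\mathcal{F}_{t_i}]\le\sigma_1^2(s-t_i)$, and the unconditional $L^4$-bound from Burkholder--Davis--Gundy (BDG) combined with $\mathbb{E}[Z_s^4]\le c(s-t_i)^2$.

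For Item 3 I decompose $E_{t_i}\varphi_{\Delta_{n,i}^\beta}(\Delta_i X)$ as the sum of $2b(X_{t_i})J_{t_i}\varphi$, $\tfrac{2}{\Delta_n}Z_{t_i}J_{t_i}\varphi$ and $\tfrac{1}{\Delta_n}J_{t_i}^2\varphi$. The first and third summands are controlled directly by Lemma \ref{lemma: esp sauts} with the appropriate $q$, using the moment bounds on $X$ from Proposition \ref{prop: Lyapounov}. The delicate piece is the cross term: my key observation is that on the support of $\varphi_{\Delta_{n,i}^\beta}(\Delta_i X)$ one has $|\Delta_i X|\le 2\Delta_{n,i}^\beta$, and hence
\[
|Z_{t_i}|\;\le\; |\Delta_i X|+|J_{t_i}|+\Bigl|\int_{t_i}^{t_{i+1}} b(X_s)\,ds\Bigr| \;\le\; c\bigl(\Delta_{n,i}^\beta + |J_{t_i}| + \Delta_{n,i}(1+|X_{t_i}|)\bigr).
\]
Substituting this pointwise bound into $\mathbb{E}[|Z_{t_i}J_{t_i}|\varphi]$, $\mathbb{E}[Z_{t_i}^2 J_{t_i}^2\varphi]$, $\mathbb{E}[Z_{t_i}^4 J_{t_i}^4\varphi]$ reduces every expectation to pure moments of $J_{t_i}\varphi$ to which Lemma \ref{lemma: esp sauts} applies; dividing by the appropriate power of $\Delta_n$ then produces the rates $\Delta_{n,i}^{2\beta}$, $\Delta_{n,i}^{4\beta-1}$, $\Delta_{n,i}^{8\beta-3}$, with all remaining contributions of strictly higher order since $\beta<1/2$.

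For Item 1 I split $\tilde A_{t_i}=\sigma^2(X_{t_i})(\varphi-1)+\bar A_{t_i}\varphi+B_{t_i}(\varphi-1)$. The first piece obeys $\mathbb{E}[|\sigma^2(X_{t_i})(\varphi-1)|^k]\le\sigma_1^{2k}\,\mathbb{E}[|\varphi-1|^k]\le c\Delta_{n,i}$ by Lemma \ref{lemma: proba varphi -1}. For $\bar A_{t_i}\varphi$, I bound each of its four summands using Lipschitz continuity of $b$ and $\sigma^2$, the short-time SDE estimate $\mathbb{E}[|X_s-X_{t_i}|^p]\le c(s-t_i)$ (obtained by decomposing the SDE into drift, Brownian and Hawkes-jump parts, applying BDG and using the moments of $\lambda$ from Proposition \ref{prop: Lyapounov}), together with Lemma \ref{lemma: esp sauts} for the $J_{t_i}$-contributions; this yields $\mathbb{E}[\bar A_{t_i}^k]\le c\Delta_{n,i}$ for $k=2,4$, in fact sharper than the claimed rate.

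The main obstacle is the third piece $B_{t_i}(\varphi-1)$: a direct Cauchy--Schwarz would cost only a factor $\Delta_{n,i}^{1/2}$ since $B_{t_i}$ is merely $O(1)$ in $L^p$, which would fall short of the target $\Delta_{n,i}^{1-\tilde\varepsilon}$. I circumvent this by applying H\"older with an arbitrarily large exponent on $B_{t_i}$: for conjugate $1/p+1/q=1$,
\[
\mathbb{E}\bigl[|B_{t_i}|^k|\varphi-1|^k\bigr]\;\le\;\mathbb{E}[|B_{t_i}|^{kp}]^{1/p}\,\mathbb{E}[|\varphi-1|^{kq}]^{1/q}\;\le\;c\,\Delta_{n,i}^{1/q},
\]
and one chooses $q=1/(1-\tilde\varepsilon)$ arbitrarily close to $1$. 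The finiteness of $\mathbb{E}[|B_{t_i}|^{kp}]$ for all $p$ follows from BDG applied to the martingale representation of Item 2 together with the boundedness of $\sigma$. It is precisely this H\"older trade-off that forces the appearance of the arbitrarily small loss $\tilde\varepsilon>0$ in the statement of Item 1.
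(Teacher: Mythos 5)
Your proposal is correct and, for Items 1 and 2, follows essentially the paper's own route: the centering and moment bounds on $B_{t_i}$ via BDG/It\^o isometry, the three-way split of $\tilde A_{t_i}$, the direct $c\Delta_{n,i}$ bound on the $\bar A_{t_i}\varphi$ part (the paper also gets $c\Delta_{n,i}$ there), and the H\"older trade-off on $B_{t_i}(\varphi-1)$ with a large exponent on $B_{t_i}$, which is indeed exactly where the paper's $\tilde\varepsilon$ loss originates. The genuine difference is in the cross term $\frac{2}{\Delta_n}Z_{t_i}J_{t_i}\varphi$ of Item 3: you exploit the truncation event pointwise, writing $|Z_{t_i}|\le|\Delta_i X|+|J_{t_i}|+|\int_{t_i}^{t_{i+1}}b(X_s)ds|\le c(\Delta_{n,i}^\beta+|J_{t_i}|+\text{drift})$ on $\{\varphi\neq 0\}$, which converts all powers of $Z_{t_i}$ into powers of $\Delta_{n,i}^\beta$ and of $J_{t_i}\varphi$ and then lets Lemma \ref{lemma: esp sauts} finish; the paper instead separates $Z_{t_i}$ from $J_{t_i}\varphi$ by H\"older, using $\mathbb{E}[|Z_{t_i}|^p]^{1/p}\le c\Delta_{n,i}^{1/2}$ from BDG and then checking that the resulting exponent $\tfrac12+\beta-\varepsilon$ (resp.\ $2\beta-\varepsilon$, $4\beta-1-\varepsilon$) is dominated because $\beta<\tfrac12$. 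Both devices yield the stated rates; yours is slightly more structural and avoids the $\varepsilon$ bookkeeping on the cross term, while the paper's H\"older splitting needs no control of $X$ inside the truncation event. One small imprecision to patch: your ``pointwise'' drift bound $|\int_{t_i}^{t_{i+1}}b(X_s)ds|\le c\Delta_{n,i}(1+|X_{t_i}|)$ is not valid as stated, since $X_s$ need not stay near $X_{t_i}$; either bound it by $c\Delta_{n,i}(1+\sup_{s\in[t_i,t_{i+1}]}|X_s|)$ and invoke moments of the supremum, or write $b(X_s)=b(X_{t_i})+(b(X_s)-b(X_{t_i}))$ and use H\"older together with $\mathbb{E}[|X_s-X_{t_i}|^p]\le c\,|s-t_i|$ as in Lemma \ref{lemma: incrementi}. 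Since this drift contribution is of strictly higher order, the fix does not alter any of your rates.
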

In the Proposition here above, it is possible to see in detail in what terms the contribution of $\tilde{A}_{t_i}$ and of the truncation of $E_{t_i}$ are small. Moreover, an analysis of the centered Brownian term $B_{t_i}$ and its powers is proposed.

Based on these variables, we propose a nonparametric estimation procedure for the function $ \sigma^2(\cdot)$ on a {\modar compact} interval A of $\mathbb{R}$. We consider $\mathcal{S}_m$ a linear subspace of $L^2(A)$ such that $\mathcal{S}_m = \text{span} (\varphi_1,\ldots , \varphi_{D_m})$
of dimension $D_m$, where $(\varphi_i)_i$ is an orthonormal basis of $L^2(A)$. We denote
$\tilde{S}_n := \cup_{m \in \cM_n} \mathcal{S}_m$, where $\cM_n$ is a set of indexes for the model collection. The contrast function is defined by
\begin{equation}\label{eq:gamma}
\gamma_{n, M}(t):= \frac{1}{n} \sum_{i = 0}^{n - 1} (t(X_{t_i})- T_{t_i}\varphi_{\Delta_{n,i}^\beta}(\Delta_i X))^2{\modar\one_A(X_{t_i}) }
\end{equation}
with the $T_{t_i}$ given in Equation \eqref{eq: Tipoursigma}.
The associated mean squares contrast estimator is
\begin{equation}
\w{\sigma}^2_m := \arg\min_{t \in \mathcal{S}_m} \gamma_{n, M}(t).
\label{eq: def estimator sigma} 
\end{equation}
We observe that as $\w{\sigma}^2_m$ achieves the minimum, it represents the projection of our estimator on the space $\mathcal{S}_m$. {\modar The indicator function in \eqref{eq:gamma} suppresses the contribution
	of the data falling outside the compact set $A$, on which we estimate the unknown function $\sigma^2$.  However, this indicator function is introduced for convenience and could be removed without affecting the value of the argmin in \eqref{eq: def estimator sigma}, as all elements of $\mathcal{S}_m$ are compactly supported on $A$.}
The approximation spaces $\mathcal{S}_m$ have to satisfy the following properties.

\begin{ass}[Assumptions on the subspaces]\
\label{ass: subspace}

\begin{enumerate}
    \item There exists $\phi_1$ such that, for any $t \in \mathcal{S}_m$, $\left \| t\right \|_\infty^2 \le \phi_1 D_m \left \| t\right \|^2 $.
    \item Nesting condition: $(\mathcal{S}_m)_{m \in \cM_n}$ is a collection of models such that there exists a space denoted by  $\tilde{\mathcal{S}}_n$, belonging to the collection, with $\mathcal{S}_m \subset \tilde{\mathcal{S}}_n$ for all $m \in \cM_n$. We denote by $N_n$ the dimension of $ \tilde{\mathcal{S}}_n$. It implies that, $\forall m \in \cM_n$, $D_m \le N_n$.
    \item  For any positive $d$ there exists $\tilde{\varepsilon} > 0$ such that, for any $\varepsilon < \tilde{\varepsilon}$, $\sum_{m \in \cM_n}  e^{-d D_m^{1 - \varepsilon}} \le \Sigma (d)$, where $\Sigma(d)$ denotes a finite constant depending only on $d$.
\end{enumerate} 
\end{ass}
Several possible collections of spaces are available: we can consider the collection of dyadic regular piecewise polynomial spaces [DP], the trigonometric spaces [T] or the dyadic wavelet-generated spaces [W] (see Sections 2.2 and 2.3 of \cite{CGCR} for details). As we will see, in the numerical part we will consider the trigonometric spaces while all the results gathered in the theory hold true for no matter which space. However, depending on the collection we consider, we need to require a different condition on the dimension of the space, as stated in the assumption below.
\begin{ass}[Assumptions on the dimension]\
\begin{enumerate}
    \item There exists a constant $c > 0$ such that $N_n \le c \frac{\sqrt{n \Delta_n}}{\log n}$ and $\frac{N_n^3}{n} \le 1$ for the collection [T].
    \item  There exists a constant $c > 0$ such that $N_n \le c \frac{n \Delta_n}{\log^2 n}$ for collections [DP] and [W].
\end{enumerate}
\end{ass}

We now introduce the empirical norm 
$$\left \| t \right \|^2_n :=  \frac{1}{n} \sum_{i = 0}^{n - 1} t^2(X_{t_i})\one_A(X_{t_i}).$$

The main result of this section consists in a bound for $ \mathbb{E}[\left \| \w{\sigma}^2_m - \sigma^2 \right \|_n^2 ]$, which is gathered in the following proposition. Its proof can be found in Section \ref{S: proof volatility}.
\begin{proposition}
Suppose that Assumptions 1,2,3,4,5 hold and that $\beta \in (\frac{1}{4}, \frac{1}{2})$. If $\Delta_n \rightarrow 0$, $\log n = o(\sqrt{n \Delta_n})$ for $n \rightarrow\infty$, then the estimator $\w{\sigma}^2_m$ of $\sigma^2$ on A given by Equation \eqref{eq: def estimator sigma} satisfies 
\begin{equation}\label{eq: propsigma}
\E\left[\left \| \w{\sigma}^2_m - \sigma^2 \right \|_n^2 \right] \le  3 \inf_{t \in \mathcal{S}_m} \left \| t - \sigma^2 \right \|_{\pi^X}^2 +  \frac{C_1 \sigma_1^4 D_m}{n} +  C_2 \Delta_n^{4 \beta - 1} + \frac{C_3 \Delta_{n}^{0 \land  (4 \beta -\frac{3}{2})}}{n^2},
\end{equation}
with $C_1$, $C_2$ and $C_3$ positive constants.
\label{prop: volatility}
\end{proposition}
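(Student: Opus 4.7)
The plan is to start from the defining inequality $\gamma_{n,M}(\widehat\sigma^2_m)\le \gamma_{n,M}(t)$, valid for every $t\in\mathcal{S}_m$. Using the decomposition $T_{t_i}\varphi_{\Delta_{n,i}^\beta}(\Delta_i X)=\sigma^2(X_{t_i})+\widetilde A_{t_i}+B_{t_i}+E_{t_i}\varphi_{\Delta_{n,i}^\beta}(\Delta_i X)$ derived just above the proposition, I would expand both contrasts, cancel the common squared ``target'' term and rearrange to obtain
$$\|\widehat\sigma^2_m-\sigma^2\|_n^2\le \|t-\sigma^2\|_n^2+2\,\nu_n(\widehat\sigma^2_m-t),$$
with the empirical linear functional
$$\nu_n(h):=\frac{1}{n}\sum_{i=0}^{n-1}h(X_{t_i})\,R_{t_i}\,\mathds{1}_A(X_{t_i}),\qquad R_{t_i}:=\widetilde A_{t_i}+B_{t_i}+E_{t_i}\varphi_{\Delta_{n,i}^\beta}(\Delta_i X),$$
which I would split into three contributions $\nu_n^{\widetilde A}$, $\nu_n^{B}$, $\nu_n^{E}$ to be treated separately.

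To control $\nu_n$ uniformly on $\mathcal{S}_m$, I would project onto an $L^2(A,\pi^X)$-orthonormal basis $(\psi_j)_{j\le D_m}$ of $\mathcal{S}_m$, so that $\nu_n(h)^2\le \|h\|_{\pi^X}^2\sum_{j} \nu_n(\psi_j)^2$ for every $h\in\mathcal{S}_m$. For the martingale component $\nu_n^B$, the centering $\mathbb{E}[B_{t_i}\mid\mathcal{F}_{t_i}]=0$ from Proposition \ref{lemma: size A B E} kills the cross products, so that $\mathbb{E}[\nu_n^B(\psi_j)^2]\le c\sigma_1^4/n$ by stationarity and the conditional-variance bound; summing in $j$ produces the variance term $C_1\sigma_1^4 D_m/n$. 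For $\nu_n^{\widetilde A}$ and $\nu_n^E$, a direct Cauchy--Schwarz in $i$ gives $\nu_n^\bullet(h)^2\le \|h\|_n^2\cdot\tfrac{1}{n}\sum_i \widetilde A_{t_i}^2$ (resp.\ $E_{t_i}^2\varphi_{\Delta_{n,i}^\beta}(\Delta_i X)$), after which the moment estimates $\mathbb{E}[\widetilde A_{t_i}^2]\le c\Delta_n^{1-\widetilde\varepsilon}$ and $\mathbb{E}[E_{t_i}^2\varphi_{\Delta_{n,i}^\beta}(\Delta_i X)]\le c\Delta_n^{4\beta-1}$ from Proposition \ref{lemma: size A B E} yield the $C_2\Delta_n^{4\beta-1}$ term, the $\widetilde A$ contribution being absorbed because $\beta>1/4$.

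The last step combines everything via Young's inequality $2\nu_n(\widehat\sigma^2_m-t)\le \tfrac{1}{4}\|\widehat\sigma^2_m-t\|_n^2+C\,\bigl(\sup_{h\in\mathcal{S}_m,\|h\|_{\pi^X}=1}\nu_n(h)\bigr)^2$, absorbs a fraction of $\|\widehat\sigma^2_m-\sigma^2\|_n^2$ into the left-hand side, takes expectations using $\mathbb{E}\|t-\sigma^2\|_n^2=\|t-\sigma^2\|_{\pi^X}^2$ (stationarity), and finally optimises over $t\in\mathcal{S}_m$; the constant $3$ in front of the bias term is an artefact of these two operations. The main obstacle I anticipate is the passage from the random empirical norm $\|\cdot\|_n$, naturally produced by the contrast, to the deterministic $\|\cdot\|_{\pi^X}$ appearing in the statement. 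I would handle this on an event $\Omega_n$ of norm equivalence over $\widetilde{\mathcal{S}}_n$, established via a Bernstein-type concentration inequality for the exponentially $\beta$-mixing sequence $(X_{t_i})$ combined with Assumption \ref{ass: subspace}.1 and the dimension restriction $N_n\le c\sqrt{n\Delta_n}/\log n$, which under $\log n=o(\sqrt{n\Delta_n})$ makes $\mathbb{P}(\Omega_n^c)$ polynomially small in $n$. On $\Omega_n^c$ one resorts to a crude moment bound for $\|\widehat\sigma^2_m-\sigma^2\|_n^2$: this is exactly where the higher-moment estimate $\mathbb{E}[E_{t_i}^4\varphi]\le c\Delta_n^{8\beta-3}$ from Proposition \ref{lemma: size A B E} enters and generates the residual term $C_3\Delta_n^{0\wedge(4\beta-3/2)}/n^2$, whose exponent degrades for $\beta<3/8$ because the fourth moment of $E_{t_i}$ deteriorates as $\beta\downarrow 1/4$.
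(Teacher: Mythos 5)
Your proposal is correct and follows the same route as the paper's proof: starting from the defining contrast inequality, isolating the centered term $B_{t_i}$ (controlled through an orthonormal basis of $\mathcal{S}_m$ and the conditional second-moment bound of Proposition~\ref{lemma: size A B E}), bounding the $\widetilde A_{t_i}$ and truncated $E_{t_i}$ contributions by Cauchy--Schwarz, and splitting the risk along the norm-equivalence event $\Omega_n$, with a crude fourth-moment bound on $\Omega_n^c$ via $\mathbb{P}(\Omega_n^c)=O(n^{-4})$ (Lemma~6.4 of \cite{Dion Lemler}). One small correction: the absorption of the $\widetilde A$ contribution into $C_2\Delta_n^{4\beta-1}$ rests on $\beta<\tfrac12$ (so that one can pick $\tilde\varepsilon$ with $1-\tilde\varepsilon\ge 4\beta-1$), not on $\beta>\tfrac14$, which only guarantees that this remainder term vanishes as $\Delta_n\to 0$; also, in your Young step the normalization inside the sup and the norm on the absorbed piece should both be $\|\cdot\|_{\pi^X}$ (or both $\|\cdot\|_n$) on $\Omega_n$, a purely notational fix.
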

%
%

This result measures the accuracy of our estimator $\w\sigma_m^2$ for the empirical norm. The right-hand side of the Equation \eqref{eq: propsigma} is decomposed into different types of error. The first term corresponds to the bias term, which decreases with the dimension $D_m$ of the space of approximation $\mathcal{S}_m$. The second term corresponds to the variance term, i.e., the estimation error, and contrary to the bias, it increases with $D_m$. The third term comes from the discretization error and the controls obtained in Proposition \ref{lemma: size A B E}, taking into account the jumps. Then, the fourth term arise evaluating the norm $\| \w{\sigma}^2_m - \sigma^2\|_n^2 $ when $\|.\|_n$ and $\|.\|_{\pi^X}$ are not equivalent.
This inequality ensures that our estimator $\w\sigma^2_m$ does almost as well as the best approximation of the true function by a function of $S_m$.

Finally, it should be noted that the variance term is the same as for a diffusion without jumps.  Nevertheless, the remaining terms are larger because of the presence of the jumps.

\paragraph{Rates of convergence}
Let us remind that 
$\| t - \sigma^2 \|_{\pi^X}^2 \leq \pi_1\| t - \sigma_{|A}^2\|^2$ according to Lemma \ref{lemma: boundpiX}. Thus let us consider $t=\sigma^2_m$ the projection of $\sigma^2$ on $\mathcal{S}_m$ for the $L^2$-norm which realizes the minimum.
We assume now that the function of interest $\sigma_{|A}^2$ is in a Besov space $\mathcal{B}^{\alpha}_{2, \infty}$ with regularity $\alpha >0$ 
(see \emph{e.g.} \cite{devore} for a proper definition). Then it comes that
$\|\sigma^2_m-\sigma_{|A}^2\|^2 \leq C(\alpha) D_m^{-2\alpha}$. 
Finally, choosing $D_{m_{\text{opt}}}= n^{1/(2\alpha+1)}$, and $\Delta_n= n^{-\gamma}$ with $0<\gamma<1$, we obtain the rates of convergences given in Table \ref{tab:ratessig}. This table allows to compare the actual rates with the one obtained when $a \equiv 1$ and the process is a simple diffusion process. 

Since $\beta \in (1/4, 1/2)$, the best choice for $\beta$ is to choose it close to $1/4$. In this case, the estimator reaches the classical nonparametric convergence rate for high-frequency observations
($n\Delta_n^{(1+2\alpha)/2\alpha}= O(1)$). Otherwise, most of the time the remainder term will be predominant in the risk.
This result is analogous to the jump-diffusion case studied in \cite{Schmisser}.

\begin{table}
    \centering
    \begin{tabular}{c|c|c}
    & Hawkes-diffusion & Diffusion \\
    \hline
$0< \gamma \leq \frac{2\alpha}{2(2\alpha+1)}\leq 1/2$ & $\Delta_n^{2\beta-1/2}$ & $\Delta_n$\\
       $\frac{2\alpha}{2(2\alpha+1)}\leq \gamma \leq \left(\frac{2\alpha}{(2\alpha+1)(4\beta-1)}\right)\land 1 $  &  $\Delta_n^{2\beta-1/2}$ & $n^{-\alpha/(2\alpha+1)}$\\
        $ \left(\frac{2\alpha}{(2\alpha+1)(4\beta-1)}\right)\land 1\leq \gamma < 1$ &  $n^{-\alpha/(2\alpha+1)} $ & $n^{-\alpha/(2\alpha+1)} $
    \end{tabular}
    \caption{Rates of convergence for $\hat\sigma^2_{m}$}
    \label{tab:ratessig}
\end{table}

We observe that, after having replaced the optimal choice for $D_m$, which corresponds to $D_{m_{\text{opt}}}= n^{1/(2\alpha+1)}$, the conditions gathered in Assumption 5 become $\gamma \le \frac{2 \alpha -1}{1 + 2 \alpha}$ for collection [T] and $\gamma \le \frac{2 \alpha}{1 + 2 \alpha}$ for collections [DP] and [W]. \\
It is important also to remark that in order to have negligible reminder terms, we want $\gamma$ to be such that $\gamma \ge \frac{2\alpha}{(2\alpha+1)(4\beta-1)}$. As $\beta \in (\frac{1}{4}, \frac{1}{2})$, the best possible case is to have $\beta$ in the neighbourhood of $1/2$, so that the right hand side of the inequality here above becomes $\frac{2 \alpha}{1 + 2 \alpha}$. It means that, considering the collections [DP] and [W] are respected at the same time up to have a discretization step $\Delta_n = (\frac{1}{n})^{\frac{2 \alpha}{1 + 2 \alpha}}$ while considering the collection [T] the two conditions can not be respected at the same time and the only possibility is to take $\alpha$ big. Indeed, if the function of interest is in a Besov space $\mathcal{B}^{\infty}_{2, \infty}$, then the condition in Assumption 5 is respected and the reminder terms are negligible for $\gamma = 1$, which means $\Delta_n = 1/n$. \\
This implies that the term $\Delta_n^{4 \beta-1}$ always dominates the others. As it is easy to see comparing it with the third point of Proposition \ref{lemma: size A B E}, it comes from the contribution of the jumps. However, the bound obtained in Proposition \ref{lemma: size A B E} on the filtered jumps is optimal. Hence, in order to have negligible reminder terms, the idea is to try to lighten the condition on $N_n$ gathered in Assumption 5, rather than trying to improve the bound on the contribution of the jumps.

\subsection{Adaption procedure}{\label{S: adaptive volatility}}
We want to define a criterion in order to select automatically the best dimension $D_m$ (and so the best model) in the sense of the empirical risk. This procedure should be adaptive, meaning independent of $\sigma^2$ and dependent only on the observations. The final chosen model minimizes the following criterion:

\begin{equation}\label{eq: m1}
\w{m}_\sigma := \arg\min_{m \in \cM_n} \{\gamma_{n, M} (\w{\sigma}^2_m) + \pen_\sigma(m)\},
\end{equation}
with $\pen_\sigma(\cdot)$ the increasing function on $D_m$ given by
\begin{equation}\label{eq:pensig}
\pen_\sigma(m) := \kappa_1 \frac{D_m}{n}
\end{equation}
where $\kappa_1>0$ is a constant which has to be calibrated. 
Next theorem is proven in Section \ref{S: proof volatility}.
\begin{theorem}
Suppose that Assumptions 1,2,3,4 hold and that $\beta \in (\frac{1}{4}, \frac{1}{2})$. If $\Delta_n \rightarrow 0$ and $\log n = o(\sqrt{n \Delta_n})$ for $n \rightarrow\infty$, then the estimator $\w{\sigma}^2_{\w{m}_\sigma }$ of $\sigma^2$ on A given by equations \eqref{eq: def estimator sigma} and \eqref{eq: m1} satisfies 
$$\mathbb{E}\left[\left \| \w{\sigma}^2_{\w{m}_\sigma} - \sigma^2 \right \|_n^2\right] \le C_1 \inf_{m \in \cM_n} \left \{
 \inf_{t \in \mathcal{S}_m}\|t-\sigma^2\|^2_{\pi^X} 
+ \text{\rm pen}_\sigma(m) 
\right \} + C_2 \Delta_n^{4 \beta - 1} + \frac{C_3 \Delta_n^{ 4 \beta -\frac{3}{2}}}{n^2 } + \frac{C_4}{n }$$
where $C_1>1$ is a numerical constant and $C_2, C_3, C_4$ are positive constants depending on $\Delta_n, \sigma_1$ in particular.
\label{th: vol adaptive}
\end{theorem}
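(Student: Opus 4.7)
\textbf{Proof plan for Theorem \ref{th: vol adaptive}.}

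The plan is to follow a standard penalized contrast / model-selection route, adapted to the fact that the noise carries three distinct pieces ($B_{t_i}$, $\tilde A_{t_i}$ and $E_{t_i}\varphi$) identified in Proposition~\ref{lemma: size A B E}. I start from the defining inequality of $\w m_\sigma$: for every $m\in\cM_n$ and every $t\in\mathcal{S}_m$,
$$
\gamma_{n,M}(\w\sigma^2_{\w m_\sigma})+\pen_\sigma(\w m_\sigma)\le \gamma_{n,M}(t)+\pen_\sigma(m).
$$
Using $T_{t_i}\varphi_{\Delta_{n,i}^\beta}(\Delta_iX)=\sigma^2(X_{t_i})+\tilde A_{t_i}+B_{t_i}+E_{t_i}\varphi_{\Delta_{n,i}^\beta}(\Delta_iX)$ and expanding both contrasts, I get the identity $\gamma_{n,M}(u)-\gamma_{n,M}(v)=\|u-\sigma^2\|_n^2-\|v-\sigma^2\|_n^2-2\nu_n(u-v)-2\rho_n(u-v)$, where $\nu_n(h):=\tfrac1n\sum_i B_{t_i} h(X_{t_i})\one_A(X_{t_i})$ is the (conditionally) centered Brownian empirical process and $\rho_n(h):=\tfrac1n\sum_i(\tilde A_{t_i}+E_{t_i}\varphi_{\Delta_{n,i}^\beta}(\Delta_iX))h(X_{t_i})\one_A(X_{t_i})$ is the discretization/jump bias process. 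Taking $t=\sigma^2_m$ the $L^2(A)$-projection of $\sigma^2$ onto $\mathcal{S}_m$ yields
$$
\|\w\sigma^2_{\w m_\sigma}-\sigma^2\|_n^2\le \|\sigma^2_m-\sigma^2\|_n^2+2\nu_n(\w\sigma^2_{\w m_\sigma}-\sigma^2_m)+2\rho_n(\w\sigma^2_{\w m_\sigma}-\sigma^2_m)+\pen_\sigma(m)-\pen_\sigma(\w m_\sigma).
$$

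Next I localize on the set where $\|\cdot\|_n$ and $\|\cdot\|_{\pi^X}$ are equivalent on $\tilde{\mathcal{S}}_n$ (a standard event $\Omega_n$ whose complement has probability bounded by $c/n^k$ thanks to Assumption~5 on the dimensions and the Bernstein inequality for $\beta$-mixing processes; see the norm-equivalence lemma used in \cite{CGCR,Schmisser}). On $\Omega_n$, since $\w\sigma^2_{\w m_\sigma}-\sigma^2_m$ lies in $\mathcal{S}_m+\mathcal{S}_{\w m_\sigma}$, I bound the two cross terms by $2|\nu_n(h)|\le \tfrac18\|h\|_{\pi^X}^2+8 \sup_{u\in B_{m,m'}}\nu_n(u)^2$ with $B_{m,m'}=\{u\in \mathcal{S}_m+\mathcal{S}_{m'}:\|u\|_{\pi^X}\le1\}$, and similarly $2|\rho_n(h)|\le \tfrac18\|h\|_{\pi^X}^2+8\sup_{u\in B_{m,m'}}\rho_n(u)^2$. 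The contribution $\tfrac14\|\w\sigma^2_{\w m_\sigma}-\sigma^2_m\|_{\pi^X}^2$ is reabsorbed by the left-hand side via $\|\w\sigma^2_{\w m_\sigma}-\sigma^2_m\|_{\pi^X}^2\le 2(\|\w\sigma^2_{\w m_\sigma}-\sigma^2\|_{\pi^X}^2+\|\sigma^2-\sigma^2_m\|_{\pi^X}^2)$ and the norm equivalence on $\Omega_n$.

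The heart of the argument is the concentration of $\sup_{u\in B_{m,m'}}\nu_n(u)^2$. I introduce a function $p(m,m')=\kappa_0(D_m+D_{m'})/n$ and apply Talagrand's inequality (in the Klein--Rio form used in \cite{Schmisser,Dion Lemler}) to the conditionally centered random variables $B_{t_i}h(X_{t_i})\one_A(X_{t_i})$ via a $\beta$-mixing block decomposition of Viennet type; the bounds $\E[B_{t_i}^2|\mathcal{F}_{t_i}]\le c\sigma_1^4$, $\E[B_{t_i}^4]\le c$ of Proposition~\ref{lemma: size A B E}, together with Assumption~4.1 and the mixing estimate $\beta_X(t)\le Ke^{-\gamma t}$ of Section~2.3, supply the variance, supremum and weak-variance inputs of Talagrand. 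This yields
$$
\E\Bigl[\Bigl(\sup_{u\in B_{m,m'}}\nu_n(u)^2-p(m,m')\Bigr)_+\Bigr]\le \frac{c}{n}\,e^{-c'D_{m'}^{1-\varepsilon}},
$$
which after summation over $m'\in\cM_n$ (finite by Assumption~4.3) produces an $O(1/n)$ contribution. Choosing $\kappa_1$ large enough so that $8p(m,m')\le \pen_\sigma(m)+\pen_\sigma(\w m_\sigma)$ cancels the penalty difference.

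The non-centered process $\rho_n$ is handled by a direct $L^2$ bound: Cauchy--Schwarz and Proposition~\ref{lemma: size A B E} give $\E[\sup_{u\in B_{m,m'}}\rho_n(u)^2]\le c D_{\tilde{\mathcal{S}}_n}\bigl(\Delta_n^{1-\tilde\varepsilon}+\Delta_n^{4\beta-1}\bigr)$ (using Assumption~4.1 to pass from $\|u\|_\infty$ to $\sqrt{D}\|u\|$), which under Assumption~5 on $N_n$ is of the required order $\Delta_n^{4\beta-1}+\Delta_n^{4\beta-3/2}/n^2$. Finally, I take expectation, separate the integration on $\Omega_n$ and $\Omega_n^c$ (where Cauchy--Schwarz plus $\P(\Omega_n^c)\le c/n^k$ and moment bounds on $\sigma^2$ and $\w\sigma^2_{\w m_\sigma}$ from Proposition~\ref{prop: Lyapounov} control the remainder), and replace $\|\sigma^2_m-\sigma^2\|_n^2$ on the right-hand side by $\|\sigma^2_m-\sigma^2\|_{\pi^X}^2$ via the same equivalence argument, then minimize over $m\in\cM_n$ and $\sigma^2_m\in\mathcal{S}_m$. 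The main obstacle will be the Talagrand step: verifying the three moment/variance inputs in a way that survives the Hawkes-driven, non-independent increment structure — this is where the conditional centering of $B_{t_i}$ given $\mathcal{F}_{t_i}$ combined with the exponential $\beta$-mixing of Section~2.3 is essential.
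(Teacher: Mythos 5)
Your overall blueprint (penalized-contrast inequality, norm-equivalence set $\Omega_n$, Talagrand for the centered part, dimension summability from Assumption 4.3, and a separate rough bound on $\Omega_n^c$) matches the paper's, but there is one genuine error in how you treat the bias/jump cross term, and it breaks the rate.

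You bound the non-centered cross term through $2|\rho_n(h)| \le \tfrac18\|h\|_{\pi^X}^2 + 8\sup_{u\in\mathcal{B}_{m,m'}}\rho_n(u)^2$, exactly as for $\nu_n$, and then estimate $\E\bigl[\sup_{u\in\mathcal{B}_{m,m'}}\rho_n(u)^2\bigr]$ by going through $\|u\|_\infty\le \sqrt{\phi_1 D}\,\|u\|$ (Assumption 4.1). Because $\rho_n$ is not centered, there is no cancellation across indices and this inevitably yields a factor $D=D_m+D_{m'}$, and in the adaptive setting $D_{\w m}$ can be as large as $N_n$. So your bound is of order $N_n\bigl(\Delta_n^{1-\tilde\varepsilon}+\Delta_n^{4\beta-1}\bigr)$, not $\Delta_n^{4\beta-1}$; with $N_n\asymp \sqrt{n\Delta_n}/\log n$ (Assumption 5 for [T]) this is dramatically larger than the stated $C_2\Delta_n^{4\beta-1}$. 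Assumption 5 constrains $N_n$ so that the \emph{Talagrand} residual and the norm-equivalence event behave, but it does not make $N_n\Delta_n^{4\beta-1}\lesssim\Delta_n^{4\beta-1}$.

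The fix is the one the paper uses: do not pass the non-centered part through a supremum over the unit ball at all. Instead, on the original cross term $\tfrac{2}{n}\sum_i(\tilde A_{t_i}+E_{t_i}\varphi_{\Delta_{n,i}^\beta}(\Delta_iX))\,(\w\sigma^2_{\w m}-\sigma^2_m)(X_{t_i})$ apply Young's inequality termwise \emph{inside the sum}, yielding $\tfrac{d}{n}\sum_i(\tilde A_{t_i}+E_{t_i}\varphi)^2+\tfrac1d\|\w\sigma^2_{\w m}-\sigma^2_m\|_n^2$. The first piece has expectation $\le c\Delta_n^{1-\tilde\varepsilon}+c\Delta_n^{4\beta-1}$ by Proposition~\ref{lemma: size A B E}, with \emph{no} dimension factor, and the second is absorbed exactly as you describe. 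Only the genuinely centered part $\nu_n$ should be localized over $\mathcal{B}_{m,m'}$ and handled by Talagrand. A secondary remark: for the Talagrand step the paper does not rebuild Berbee/Viennet coupling (that machinery is reserved for Theorem~\ref{th: estim both adaptive}, where the jump terms $C_{t_i},E_{t_i}$ enter the centered process); for $\sigma^2$ it invokes Lemma~7 of \cite{Schmisser noisy} as a black box, having only to check the $L^p$ moments of $B_{t_i}$ via BDG. Your from-scratch coupling route would also work here, it is just more than what is needed.
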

%
%
This inequality ensures that the final estimator $\w\sigma_{\w{m}_\sigma}^2$ realizes the best compromise between the bias term and the penalty term, which is of the same order as the variance term.
Indeed, it achieves the rates given in Table \ref{tab:ratessig} automatically, without the knowledge of the regularity of the function $\sigma^2$.

The convergence of this adaptive estimator is studied in Section \ref{sec:numerical}.

\section{Estimation procedure for both coefficients}{\label{S: volatility + jumps}}
\label{ref:both}
In addition to the volatility estimation, our goal is to propose a procedure to recover in a non-parametric way the jump coefficient $a$. The idea is to study the sum between the volatility and the jump coefficient and to recover consequently a way to approach the estimation of $a$ (see Section \ref{S: estim a} below). However, what turns out naturally is the volatility plus the product between the jump coefficient and the jump intensity, which leads to some difficulties as we will see in the sequel. To overcome such difficulties, we must bring ourselves to consider the conditional expectation of the intensity of the jumps with respect to $X_{t_i}$. 
In this way, we analyze the squared increments of the process $X$ differently to highlight the role of the conditional expectation. In the following, we use for the decomposition of the squared increments, the same notation as before: we denote the small bias term as $A_{t_i}$, the Brownian contribution as $B_{t_i}$ and the jump contribution as $E_{t_i}$, even if the forms of such terms are no longer the same as in Section \ref{S: volatility}. In particular, $A_{t_i}$ and $E_{t_i}$ are no longer the same as before, and their new definition can be found below, while the Brownian contribution $B_{t_i}$ remains exactly the same. To these, as previously anticipated, a term $C_{t_i}$ deriving from the conditional expectation of the intensity is added. \\

Besides, as in the previous section, we show that $A_{t_i}$ is small and $B_{t_i}$ is centered. Moreover, in this case, we also need the jump part to be centered. Therefore, we consider the compensated measure $d\tilde{N}_t$ instead of $dN_t$, relocating the difference in the drift. 

Let us rewrite the process of interest as:
\begin{equation}
\begin{cases}
d\lambda_t^{(j)} = - \alpha (\lambda_t^{(j)} - \zeta_t) dt +  \sum_{i = 1}^M c_{j,i} dN^{(i)}_t \\
dX_t = (b(X_t)+ a(X_{t^-}) \sum_{i = 1}^M \lambda_t^{(i)}) dt + \sigma(X_t) dW_t + a(X_{t^-}) \sum_{i = 1}^M  d\tilde{N}^{(i)}_t.
\end{cases}
\label{eq: model jump centered}
\end{equation}
We set now
\begin{equation}\label{eq: newJ}
J_{t_i} := \int_{t_i}^{t_{i + 1}} a(X_{s^-}) \sum_{i = 1}^M  d\tilde{N}^{(i)}_s.
\end{equation}
The increments of the process $X$ are such that
\begin{equation}\label{eq: introduction notation jump centered}
X_{t_{i + 1}} - X_{t_i} 
= \int_{t_i}^{t_{i + 1}} \left(b(X_s)+  a(X_{s^-}) \sum_{j = 1}^M \lambda^{(j)}_s\right) ds + Z_{t_i} + J_{t_i}
\end{equation}
where $J$ is given in Equation \eqref{eq: newJ} and $Z$ has not changed and is given in Equation \eqref{eq: ZJ}.
%
Let us define this time:
\begin{eqnarray}\label{eq: Ati}    
A_{t_i} &:=&\frac{1}{\Delta_n} \left(\int_{t_i}^{t_{i + 1}} (b(X_s)+  a(X_{s^-}) \sum_{j = 1}^M \lambda^{(j)}_s) ds\right)^2 + \frac{1}{\Delta_n} \int_{t_i}^{t_{i + 1}} (\sigma^2(X_s) - \sigma^2(X_{t_i})) ds \nonumber\\
&& + \frac{2}{\Delta_n}(Z_{t_i} + J_{t_i})\left(\int_{t_i}^{t_{i + 1}} (b(X_s) - b(X_{t_i})) +  (a(X_{s^-}) \sum_{j = 1}^M \lambda^{(j)}_s - a(X_{t_i}) \sum_{j = 1}^M \lambda^{(j)}_{t_i}) ds \right)\nonumber\\
&& + \frac{1}{\Delta_n} \int_{t_i}^{t_{i + 1}} (a^2(X_s) - a^2(X_{t_i})) \sum_{j = 1}^M \lambda^{(j)}_s ds  + \frac{a^2(X_{t_i})}{\Delta_n} \int_{t_i}^{t_{i + 1}}\sum_{j = 1}^M (\lambda^{(j)}_s - \lambda_{t_i}^{(j)}) ds \nonumber\\
 &&+ 2 \left(b(X_{t_i})+a(X_{t_i})\sum_{j = 1}^M \lambda^{(j)}_{t_i}\right) Z_{t_i}  + 2 \left(b(X_{t_i})+a(X_{t_i}) \sum_{j = 1}^M \lambda^{(j)}_{t_i}\right) J_{t_i},
\end{eqnarray}
\begin{equation}\label{eq: Eti}  
E_{t_i} :=  \frac{2}{\Delta_n} Z_{t_i} J_{t_i} + \frac{1}{\Delta_n} \left(J_{t_i}^2 - \int_{t_i}^{t_{i + 1}} a^2(X_s) \sum_{j = 1}^M \lambda^{(j)}_s ds\right).
\end{equation}
The term $A_{t_i}$ is small, whereas $B_{t_i}$ (which is the same as in the previous section) and $E_{t_i}$ are centered. 
%
%
Moreover, let us introduce the quantity
\begin{equation*}
\sum_{j = 1}^M \mathbb{E}[\lambda^{(j)}_{t_i} | X_{t_i} ]= \sum_{j = 1}^M \frac{\int_{\mathbb{R}^M} z_j \pi(X_{t_i}, z_1,\ldots , z_M) dz_1,\ldots , dz_M}{\int_{\mathbb{R}^M}\pi(X_{t_i}, z_1,\ldots , z_M) dz_1,\ldots , dz_M},
\end{equation*}
where $\pi$ is the invariant density of the process $(X, \lambda)$, whose existence has been discussed in Section \ref{S: ergodicity}; and \begin{equation}
C_{t_i} :=  a^2(X_{t_i})\sum_{j = 1}^M ( \lambda^{(j)}_{t_i} - \mathbb{E}[\lambda^{(j)}_{t_i} | X_{t_i} ]).
\label{eq: def C}
\end{equation}
It comes the following decomposition:
\begin{equation}\label{eq: Tipourg}
T_{t_i} = \frac{1}{\Delta_n} (X_{t_{i + 1}} - X_{t_i})^2= \sigma^2(X_{t_i}) + a^2(X_{t_i}) \sum_{j = 1}^M \mathbb{E}[\lambda^{(j)}_{t_i} | X_{t_i} ] + A_{t_i} + B_{t_i} + C_{t_i} + E_{t_i}.
\end{equation}
In the last decomposition of the squared increments, we have isolated the sum of the volatility plus the jump coefficient times the conditional expectation of the intensity with respect to $X_{t_i}$, which is an object on which we can finally use the same approach as before. Thus, as previously, the other terms need to be evaluated. The term $A_{t_i}$ is small and $B_{t_i}$ and $E_{t_i}$ are centered. Moreover, the just added term $C_{t_i}$ is clearly centered, by construction, if conditioned with respect to the random variable $X_{t_i}$ and, as we will see in the sequel, it is enough to get our main results. Here it is important to remark that the natural choice would have been to estimate directly $\sigma^2(X_{t_i}) + a^2(X_{t_i}) \sum_{j = 1}^M \lambda_{t_i}$, rather than $\sigma^2(X_{t_i}) + a^2(X_{t_i}) \sum_{j = 1}^M \mathbb{E}[\lambda^{(j)}_{t_i} | X_{t_i} ]$. The problem is that $\lambda_{t_i}$ has its own randomness  and is not observed and so the method used before for the estimation of the volatility coefficient can not work anymore. However, replacing $\lambda_{t_i}^{{ (j)}}$ with $\mathbb{E}[\lambda^{(j)}_{t_i} | X_{t_i} ]$, our goal turns out being the estimation of $g$ where
$g(X_{t_i}) : = \sigma^2(X_{t_i}) + a^2(X_{t_i}) \sum_{j = 1}^M \mathbb{E}[\lambda^{(j)}_{t_i} | X_{t_i} ],$  is now a function of the observation $X_{t_i}$ and so its non-parametric estimation can be accomplished using the same method as in previous section (see details below).  \\
As explained above Assumption \ref{ass: statio}, the Foster-Lyapunov condition in the exponential frames implies the existence of bounded moments for $\lambda$ and so we also get $\mathbb{E}[\lambda^{(j)}_{t_i} | X_{t_i} ] < \infty$, for any $j \in \left \{1,\ldots , M \right \}$.
\\

The properties here above listed are stated in Proposition \ref{prop: size A B C E} below, whose proof can be found in the appendix.

\begin{proposition}
Suppose that Assumptions 1,2,3 hold. Then, 
\begin{enumerate}
\item $\forall \tilde{\varepsilon} > 0$, $\mathbb{E}[A_{t_i}^2] \le c \Delta_{n,i}^{1 - \tilde{\varepsilon}},  \qquad \mathbb{E}[A_{t_i}^4] \le c \Delta_{n,i}^{1 - \tilde{\varepsilon}};$
\item $\mathbb{E}[B_{t_i}|\mathcal{F}_{t_i}] = 0, \qquad \mathbb{E}[B^2_{t_i}|\mathcal{F}_{t_i}] \le c \sigma_1^4, \qquad \mathbb{E}[B_{t_i}^4] \le c; $
\item $ \mathbb{E}[E_{t_i}|\mathcal{F}_{t_i}] = 0, \qquad \mathbb{E}[E^2_{t_i}|\mathcal{F}_{t_i}] \le \frac{c a_1^4}{\Delta_{n,i}} \sum_{j = 1}^M \lambda^{(j)}_{t_i}, \qquad \mathbb{E}[E_{t_i}^4] \le \frac{c}{\Delta_{n,i}^3};$
\item $\mathbb{E}[C_{t_i}|X_{t_i}] = 0, \qquad \mathbb{E}[C^2_{t_i}] \le c, \qquad \mathbb{E}[C_{t_i}^4] \le c.$
\end{enumerate}
\label{prop: size A B C E}
\end{proposition}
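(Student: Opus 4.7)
The overall approach is to treat each of the four items separately, relying on a common toolbox: the global Lipschitz continuity and pointwise bounds on $b,\sigma,a$ from Assumption \ref{ass: X}, the bounded moments of every order for $X$ and $\lambda$ provided by Proposition \ref{prop: Lyapounov}, the Itô isometry for both Brownian and compensated Hawkes integrals, the Burkholder--Davis--Gundy inequality, and the crucial orthogonality $\langle W, \tilde N^{(j)}\rangle \equiv 0$ stemming from the independence of $W$ and $N$. Standard flow estimates $\mathbb{E}[|X_s-X_{t_i}|^p]\le c\Delta_{n,i}$ for any $p\ge 1$ (for $s\in[t_i,t_{i+1}]$) will also be used repeatedly, and follow from the SDE, the linear growth of the coefficients and the boundedness of the moments of $\lambda$.

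For item 1 ($A_{t_i}$), the plan is to split the right-hand side of \eqref{eq: Ati} into its seven summands and bound each in $L^2$ and $L^4$. The summands involving only drift-like increments, $\int_{t_i}^{t_{i+1}}(\cdot)\,ds$, each contribute at most $c\Delta_{n,i}^2$ by Cauchy--Schwarz, the Lipschitz bounds, and the moment control on $\lambda$. The mixed terms containing $Z_{t_i}+J_{t_i}$ multiplied by a coefficient-difference integral are handled by Cauchy--Schwarz, using $\mathbb{E}[Z_{t_i}^2]\le c\sigma_1^2\Delta_{n,i}$, $\mathbb{E}[J_{t_i}^2]\le c\Delta_{n,i}$, and the Lipschitz-plus-moment estimate $\mathbb{E}[(b(X_s)-b(X_{t_i}))^2]\le c\Delta_{n,i}$; analogously for the $a\sum\lambda$ piece, where the $\lambda^{(j)}_s-\lambda^{(j)}_{t_i}$ increment is controlled via its own SDE. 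Finally, the martingale pieces $b(X_{t_i})Z_{t_i}$ and $(a\sum\lambda)(X_{t_i})J_{t_i}$ contribute $c\Delta_{n,i}$ in $L^2$. The small loss $\Delta_{n,i}^{1-\tilde\varepsilon}$ instead of $\Delta_{n,i}$ originates from applying Hölder with an arbitrarily large exponent in order to absorb the (not necessarily bounded) random factors $\lambda^{(j)}_{t_i}$; Proposition \ref{prop: Lyapounov} gives us all moments of $\lambda$, and the trade-off in the Hölder exponents produces the $\tilde\varepsilon$.

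Item 2 ($B_{t_i}$) is literally the quantity already treated in Proposition \ref{lemma: size A B E}(2) (its definition has not changed), so we simply invoke that result. For item 4 ($C_{t_i}$), the centering $\mathbb{E}[C_{t_i}\mid X_{t_i}]=0$ is automatic by definition; the $L^2$ and $L^4$ bounds follow from $|a(x)|\le a_1$, the tower property, the triangle inequality, and the boundedness of moments of $\lambda^{(j)}$ of any order. For item 3 ($E_{t_i}$), the centering $\mathbb{E}[E_{t_i}\mid\mathcal{F}_{t_i}]=0$ is a Doob--Meyer/Itô-isometry statement: $\mathbb{E}[J_{t_i}^2\mid\mathcal{F}_{t_i}]=\mathbb{E}\bigl[\int_{t_i}^{t_{i+1}}a^2(X_{s^-})\sum_j\lambda^{(j)}_s\,ds\mid\mathcal{F}_{t_i}\bigr]$ (using that $\tilde N^{(j)}$ have predictable quadratic variations $\int\lambda^{(j)}_s ds$ and no simultaneous jumps), while $\mathbb{E}[Z_{t_i}J_{t_i}\mid\mathcal{F}_{t_i}]=0$ by orthogonality of $W$ and the $\tilde N^{(j)}$'s.

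The main obstacle is the conditional variance bound in item 3, namely $\mathbb{E}[E_{t_i}^2\mid\mathcal{F}_{t_i}]\le (ca_1^4/\Delta_{n,i})\sum_j\lambda^{(j)}_{t_i}$. Expanding the square, the $(Z_{t_i}J_{t_i})^2$ piece is bounded by $\sigma_1^2 a_1^2 \mathbb{E}[\int_{t_i}^{t_{i+1}}\sum_j\lambda^{(j)}_s\,ds\mid\mathcal{F}_{t_i}]$ via Itô isometry for mixed Brownian/jump integrals, and the $(J_{t_i}^2-\int a^2\sum\lambda\,ds)^2$ piece by the BDG inequality applied to the purely discontinuous martingale $t\mapsto J^2_t-\int a^2\sum\lambda$. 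In both cases one must control $\int_{t_i}^{t_{i+1}}\mathbb{E}[\lambda^{(j)}_s\mid\mathcal{F}_{t_i}]\,ds$ in terms of $\lambda^{(j)}_{t_i}$; this is achieved by writing the mild form of the linear SDE for $\lambda^{(j)}$ in \eqref{eq: model jump centered}, which yields $\mathbb{E}[\lambda^{(j)}_s\mid\mathcal{F}_{t_i}]\le \lambda^{(j)}_{t_i}+C\Delta_{n,i}$ for $s\in[t_i,t_{i+1}]$. Plugging this in and collecting constants delivers the claimed bound; the $L^4$ estimate $\mathbb{E}[E_{t_i}^4]\le c/\Delta_{n,i}^3$ is obtained analogously using BDG of order $4$ and Proposition \ref{prop: Lyapounov} to take an unconditional expectation of $\lambda^{(j)}_{t_i}$.
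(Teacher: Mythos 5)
Your proposal follows essentially the same route as the paper's proof: decompose each of $A_{t_i},B_{t_i},C_{t_i},E_{t_i}$ into its elementary pieces, apply the moment bounds from Proposition \ref{prop: Lyapounov} and the increment estimates (Lemma \ref{lemma: incrementi}), use Burkholder--Davis--Gundy/Kunita for the stochastic integrals, and trade a H\"older exponent for the $\tilde\varepsilon$ loss whenever a factor $\lambda$ needs to be absorbed. Items 1, 2 and 4 are handled identically to the paper, and your use of $\mathbb{E}[\lambda_s^{(j)}\mid\mathcal F_{t_i}]\le\lambda^{(j)}_{t_i}+C\Delta_{n,i}$ is precisely point~4 of Lemma~\ref{lemma: incrementi}.

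The one place where you diverge from the paper is the cross term $\frac{4}{\Delta_n^2}\mathbb{E}_i[Z_{t_i}^2J_{t_i}^2]$ in item~3. You propose to expand $Z_tJ_t$ as a martingale (using $[Z,J]\equiv0$ since $W\perp N$) and apply the It\^o isometry for the resulting mixed quadratic variation; this is workable but requires you to also control conditional moments $\mathbb{E}_i[\lambda_s^2]$ along the way, and the claim that the result is bounded by $\sigma_1^2a_1^2\,\mathbb{E}_i[\int\sum_j\lambda^{(j)}_s\,ds]$ is stated a bit loosely. The paper instead simply uses a conditional H\"older inequality with $p$ large and $q$ close to $1$ to obtain $\frac{c}{\Delta_{n,i}^2}\mathbb{E}_i[Z_{t_i}^2J_{t_i}^2]\le c\Delta_{n,i}^{-\varepsilon}$, which is crude but strictly smaller than the target $\frac{ca_1^4}{\Delta_{n,i}}\sum_j\lambda^{(j)}_{t_i}$ (the latter is $\ge c/\Delta_{n,i}$ since the intensity is bounded away from zero), and can therefore be absorbed without further ado. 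Your more structural argument would give a sharper estimate but is more delicate; the paper's shortcut is cleaner and you may wish to adopt it to avoid having to make the ``It\^o isometry for mixed Brownian/jump integrals'' step rigorous.
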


{\modch From Proposition \ref{prop: size A B C E} one can see in detail how small the bias term $A_{t_i}$ is. Moreover, it sheds light on the fact that the Brownian term and the jump term are centered with respect to the filtration $(\mathcal{F}_t)$ while $C$ is centered with respect to the $\sigma$-algebra generated by the process $X$. }
\subsection{Non-adaptive estimator}

Based on variables we have just introduced, we propose a nonparametric estimation procedure for the function
\begin{equation}\label{eq:defg}
g(x) := \sigma^2(x) + a^2(x) f(x)
\end{equation}
with 
\begin{equation}\label{eq:f}
f(x)=\frac{\sum_{j = 1}^M \int_{\mathbb{R}^M}  z_j \pi(x, z_1,\ldots , z_M) dz_1,\ldots , dz_M}{\int_{\mathbb{R}^M}\pi(x, z_1,\ldots , z_M) dz_1,\ldots , dz_M} 
\end{equation}
on a closed interval A. One can see that the estimation of $g$ is a natural way to approach the problem of the estimation of the jump coefficient. The same idea can be found for example in \cite{Schmisser}, where a L\'evy-driven stochastic differential equation is considered. The reason why in the above mentioned work the density does not play any role is that it is assumed to be one. \\
We consider $S_m$ the linear subspace of $L^2(A)$ defined in the previous section for $m \in \cM_n$ and satisfying Assumption \ref{ass: subspace}.
The contrast function is defined almost as before, since this time we no longer need to truncate the contribution of the jumps. It is, for $t \in \tilde{S}_n$,
\begin{equation}\gamma_{n, M}(t):= \frac{1}{n} \sum_{i = 0}^{n - 1} (t(X_{t_i})- T_{t_i})^2\one_A(X_{t_i}) 
\label{E:contrast pour g}
\end{equation}
and the $T_{t_i}$ are given in Equation \eqref{eq: Tipourg} this time. 
The associated mean squares contrast estimator is
\begin{equation}
\w{g}_m := \arg\min_{t \in S_m} \gamma_{n, M}(t).
\label{eq: def estimator g} 
\end{equation}

We want to bound the empirical risk $\mathbb{E}[\left \| \w{g}_m - g \right \|_n^2 ]$ on the compact $A$. It is the object of the next result, whose proof can be found in Section \ref{S: proof both}.
\begin{proposition}
Suppose that Assumptions 1,2,3,4,5 hold. If $\Delta_n \rightarrow 0$ and $\log n = o(\sqrt{n \Delta_n})$, then the estimator $\w{g}_m$ of $g$ on $A$ satisfies, for any $0<\tilde{\varepsilon},  \varepsilon < 1$,
\begin{equation}\label{eq: propboth}
\mathbb{E}\left[\left \| \w{g}_m - g \right \|_n^2 \right] \le  3 \inf_{t \in \mathcal{S}_m} \left \| t - g \right \|_{\pi^X}^2  +  \frac{C_1 (\sigma_1^4+a_1^4 + 1) D_m^{ 1 + 2 \varepsilon}}{n \Delta_n} +  C_2 \Delta_n^{1 - \tilde{\varepsilon}} + \frac{C_3}{ n^2 \Delta_{n}^\frac{3}{2}},
\end{equation}
with $C_1$, $C_2$ and $C_3$ positive constants.
\label{prop: estim both}
\end{proposition}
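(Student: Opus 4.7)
The plan is to mirror the argument used for Proposition \ref{prop: volatility}, but with the decomposition \eqref{eq: Tipourg} in place of \eqref{eq: Tipoursigma}. First, I would exploit the definition of $\widehat{g}_m$: for every $t\in\mathcal{S}_m$, $\gamma_{n,M}(\widehat{g}_m)\le \gamma_{n,M}(t)$. Writing $T_{t_i}=g(X_{t_i})+A_{t_i}+B_{t_i}+C_{t_i}+E_{t_i}$ and choosing $t=g_m$, the $L^2(\pi^X)$-projection of $g$ onto $\mathcal{S}_m$, one obtains, after expanding the squares,
$$
\|\widehat g_m-g\|_n^2 \le \|g_m-g\|_n^2 + 2\,\nu_n(\widehat g_m-g_m),\qquad
\nu_n(u):=\tfrac{1}{n}\sum_{i=0}^{n-1}(A_{t_i}+B_{t_i}+C_{t_i}+E_{t_i})\,u(X_{t_i})\mathbf{1}_A(X_{t_i}).
$$
Splitting $\nu_n=\nu_n^A+\nu_n^B+\nu_n^C+\nu_n^E$ and applying $2xy\le \tfrac14 x^2+4y^2$ would absorb part of $\|\widehat g_m-g_m\|_n^2$ on the left-hand side, leaving a control of the form $\|\widehat g_m-g\|_n^2\le 3\|g_m-g\|_n^2+C\sum_\star \sup_{u\in\mathcal{B}_m}\nu_n^\star(u)^2$, where $\mathcal{B}_m$ is the unit ball of $(\mathcal{S}_m,\|\cdot\|_{\pi^X})$.

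I would then bound each of the four suprema separately, using the orthonormal expansion $\sup_{u\in\mathcal{B}_m}\nu_n^\star(u)^2\le \pi_0^{-1}\sum_{\lambda=1}^{D_m}\nu_n^\star(\varphi_\lambda)^2$ and taking expectations. For $\nu_n^A$, Cauchy--Schwarz together with $\mathbb{E}[A_{t_i}^2]\le c\Delta_{n,i}^{1-\tilde\varepsilon}$ (Proposition \ref{prop: size A B C E}, item 1) yields a contribution of order $\Delta_n^{1-\tilde\varepsilon}$. For $\nu_n^B$, since $(B_{t_i}\varphi_\lambda(X_{t_i})\mathbf{1}_A(X_{t_i}))_i$ is a martingale-difference sequence with conditional variance $\le c\sigma_1^4$, summation produces a term of order $\sigma_1^4 D_m/n$. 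The key case is $\nu_n^E$: the sequence is again a martingale-difference relative to $(\mathcal{F}_{t_i})$, but $\mathbb{E}[E_{t_i}^2|\mathcal{F}_{t_i}]\lesssim a_1^4\Delta_{n,i}^{-1}\sum_j \lambda_{t_i}^{(j)}$, so, using the stationarity of $(X,\lambda)$ and the boundedness of all moments of $\lambda$, the resulting variance is of order $a_1^4 D_m/(n\Delta_n)$. Finally $\nu_n^C$ is centred \emph{given $X_{t_i}$}: for fixed $\varphi_\lambda$, $\mathbb{E}[C_{t_i}\varphi_\lambda(X_{t_i})\mathbf{1}_A(X_{t_i})]=0$ and distinct-index covariances are controlled by the exponential $\beta$-mixing of $Z$, producing a contribution of order $D_m/n$ (absorbed in the variance term).

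To promote this into a bound with the $\|\cdot\|_{\pi^X}$-norm on the right-hand side, I would work on an event $\Omega_n$ on which the empirical and $\pi^X$-norms are equivalent on $\widetilde{\mathcal{S}}_n$, exactly as in the proof of Proposition \ref{prop: volatility}; the complementary event contributes the remainder $C_3/(n^2\Delta_n^{3/2})$. To upgrade $\|g_m-g\|_n^2$ to $\|g_m-g\|_{\pi^X}^2$ and to allow for a non-bounded function $g$, a truncation-of-$g$ argument (as in \cite{CGCR}) introduces the mild loss $D_m^{2\varepsilon}$ appearing in the statement. Combining everything gives the four claimed terms: the bias $3\inf_{t\in\mathcal{S}_m}\|t-g\|_{\pi^X}^2$, the variance $C_1(\sigma_1^4+a_1^4+1)D_m^{1+2\varepsilon}/(n\Delta_n)$ (dominated by the $E$-contribution), the discretization error $C_2\Delta_n^{1-\tilde\varepsilon}$ from $\nu_n^A$, and the residual $C_3/(n^2\Delta_n^{3/2})$ from the complement of $\Omega_n$.

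The principal difficulty will be the $\nu_n^E$ term: unlike the truncated situation of Proposition \ref{prop: volatility}, here the $E_{t_i}$ have conditional variance that blows up as $\Delta_{n,i}\to 0$, which is precisely what forces $n\Delta_n$ (rather than $n$) in the denominator of the variance term and which distinguishes this bound from its volatility counterpart. The handling of $\nu_n^C$, which is only conditionally centred given $X_{t_i}$ and therefore requires a covariance bound via the exponential $\beta$-mixing of $Z$ rather than a direct martingale argument, is the second non-routine point.
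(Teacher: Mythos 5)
Your overall skeleton is the one the paper uses (contrast minimization, the decomposition $T_{t_i}=g(X_{t_i})+A_{t_i}+B_{t_i}+C_{t_i}+E_{t_i}$, martingale-type bounds for $B$ and $E$, mixing for $C$, the norm-equivalence event $\Omega_n$ and a rough bound on $\Omega_n^c$), but the two steps you yourself single out as non-routine are where your accounting goes wrong. First, the $C$-contribution: you claim the $\beta$-mixing covariance bound yields a term of order $D_m/n$. With exponential mixing in \emph{continuous time} and sampling step $\Delta_n$, pairs with $|t_i-t_j|\lesssim 1$ are essentially uncorrelated only through the trivial bound, and there are of order $n/\Delta_n$ such pairs; summing the covariances therefore gives a variance of order $1/(n\Delta_n)$, not $1/n$ --- this is exactly the content of Lemma \ref{lemma: variance Cti}, which gives $\mathrm{Var}\bigl(\tfrac1n\sum_i C_{t_i}\tilde\psi_l(X_{t_i})\bigr)\le c D_m^{2\varepsilon}/(n\Delta_n)$. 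Moreover the covariance inequality for $\alpha$/$\beta$-mixing requires $L^{p}$ norms with $p>2$ of $C_{t_i}\tilde\psi_l(X_{t_i})$, and $\|\tilde\psi_l(X)\|_{2+\varepsilon}$ is only controlled via $\|\tilde\psi_l\|_\infty^{\varepsilon}\lesssim D_m^{\varepsilon}$ (Assumption \ref{ass: subspace}); you cannot run the argument with $L^2$ norms alone. Second, your explanation of the $D_m^{2\varepsilon}$ factor is incorrect: the paper never truncates $g$, and no argument "as in \cite{CGCR}" of that kind is used. The $\varepsilon$-loss comes precisely from the unboundedness of the intensity: in the $E$-term one must bound $\mathbb{E}[\tilde\psi_l^2(X_{t_i})\lambda^{(j)}_{t_i}]$, which cannot be factorized by "stationarity and bounded moments of $\lambda$" alone (boundedness of $f$ on $A$ is not established); the paper applies H\"older with exponent $1+\epsilon$ and pays $\|\tilde\psi_l\|_\infty^{2\epsilon}\lesssim D_m^{2\epsilon}$, and the same mechanism appears in Lemma \ref{lemma: variance Cti}. (Your claimed $a_1^4D_m/(n\Delta_n)$ for the $E$-term could in fact be salvaged by summing over $l$ first and using the pointwise bound $\sum_l\tilde\psi_l^2(x)\le cD_m$, but you do not say this, and as written the step is unjustified.) The remaining ingredients (the $A$-term of order $\Delta_n^{1-\tilde\varepsilon}$ via Proposition \ref{prop: size A B C E}, the $B$-term of order $\sigma_1^4D_m/n$, the treatment of $\Omega_n$ and the bound $c/(n^2\Delta_n^{3/2})$ on $\Omega_n^c$ using fourth moments and $\mathbb{P}(\Omega_n^c)\le c/n^4$) match the paper's proof.
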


As in the previous section, this inequality measures the performance of our estimator $\w{g}_m$ for the empirical norm and the comments given after Proposition \ref{prop: volatility} hold. The main difference between the proof of Proposition \ref{prop: volatility} and the proof of Proposition \ref{prop: estim both} is that, in the first case, we deal with the jumps by introducing the indicator function $\varphi$. In this way, the jump part is small and some rough estimations are enough to get rid of them (see point 3 of Proposition \ref{lemma: size A B E}). From Proposition \ref{prop: size A B C E} we can see that for the estimation of both coefficients, instead, the jump contribution (gathered in $C_{t_i}$ and $E_{t_i}$) is no longer small. However, $C_{t_i}$ and $E_{t_i}$ are both centered (even if with respect to different $\sigma$ algebras) and we can therefore apply on them the same reasoning as we did for $B_{t_i}$, which consists in a more detailed analysis. Hence, proving Proposition \ref{prop: estim both} is more challenging than proving Proposition \ref{prop: volatility}. Evidence of this is for example the fact that, to estimate $g$, a bound on the variance of $C_{t_i}$ relying on mixing properties is required (see Lemma \ref{lemma: variance Cti}).

Finally, let us compare this result with the bound \eqref{eq: propsigma} obtained for the estimator $\w{\sigma}_m^2$. The main difference is that, up to a term $D_m^{2 \varepsilon}$ for $\varepsilon$ arbitrarily small, the second term is of order $D_m/(n\Delta)$ here, instead of $D_m/n$ as it was previously. Consequently, in practice, the risks will depend mainly on $n\Delta$ for the estimation of $g$ and $n$ for the estimation of $\sigma^2$. \\
 It is worth remarking that the reason why  this extra term $D_m^{2 \varepsilon}$ appears relies on the use of the $\beta$-mixing property of the process (see Lemma \ref{lemma: variance Cti}). However, in the intensity is of the jump process is constant (Poisson process) or in the case where the process satisfies some stronger mixing properties (such as the $\rho$-mixing, for example for diffusion processes, see \cite{GCJC2000}), it is possible to improve the result gathered in Proposition \ref{prop: estim both} and to lose the $D_m^{2 \varepsilon}$.

\paragraph{Rates of convergence.}
Assume now that $g_{|A} \in \mathcal{B}_{\alpha, \infty}^2$ with $\alpha\geq 1$, then, taking $t=g^2_m$ (the projection of $g$ on $\mathcal{S}_m$) produces
$\|g_m-g_{|A}\|^2 \leq C(\alpha) D_m^{-2\alpha}$. 
Choosing $D_{m_{\text{opt}}}= (n\Delta)^{1/(2\alpha+  2 \varepsilon + 1)}$, if 
$n \Delta^{2-\tilde{\varepsilon}} \rightarrow 0$ leads to
$$\mathbb{E}\left[\left \| \w{g}_{m_{\text{opt}}} - g \right \|_n^2 \right] \leq (n\Delta)^{-2\alpha/(2\alpha+  2 \varepsilon + 1)}.$$
We obtain,  up to an $\varepsilon$ arbitrarily small, the same rate of convergence for the regularity $\alpha$ as \cite{Schmisser} for the estimation of $\sigma^2+a^2$ in the case of a jump diffusion with a L\'evy process instead of the Hawkes process. 

In practice the regularity of $g$ is unknown and thus it is necessary to choose the best model in a data driven way. This it the subject of the next paragraph. 

\subsection{Adaption procedure}{\label{s: adap g}}
Also for the estimation of $g$ we define a criterion in order to select the best dimension $D_m$ in the sense of the empirical risk. This procedure should be adaptive, meaning independent of $g$ and dependent only on the observations.  The final chosen model minimizes the following criterion:
\begin{equation}\label{eq: m2}
\w{m}_g := \arg\min_{m \in \cM_n} \{\gamma_{n, M} (\w{g}_m) + \pen_g(m)\},
\end{equation}
with $\pen_g(\cdot)$ the increasing function on $D_m$ given by
\begin{equation}\label{eq:peng}
\pen_g(m) := \kappa_2 \frac{D_m^{ 1 + 2 \varepsilon}}{n \Delta_n},
\end{equation}

where $\kappa_2$ is a constant which has to be calibrated and $\varepsilon$ is arbitrarily small. 

To establish an oracle-type inequality for the adaptive estimator $\w{g}_{\w{m}}$, the following further assumption on the discretizion step is essential.
\begin{ass}{\label{ass: step}}
There exists $\epsilon > 0$ such that 
$n^\epsilon log(n) = o (\sqrt{n \Delta})$ for $n \rightarrow \infty$.
\end{ass}
One can be interested in the reason why this condition, stronger than \eqref{eq: cond delta} we previously required, is needed. Note that \eqref{eq: cond delta} is also the condition required in the discretization scheme proposed in \cite{Dion Lemler} and used in \cite{CGCR} for the nonparametric estimation of coefficients for diffusions. As already said, the proof of the adaptive procedure involves the application of Talagrand inequality. To apply Talagrand, we need to get independent, bounded random variables through Berbee's coupling method and truncation. Intuitively the point is that, in general, such variables are built starting from the Brownian part only (in particular from $B_{t_i}$, as in \eqref{eq: def B}). For the adaptive estimation $g$, instead, also the jumps are involved (Talagrand variables depend on $B_{t_i} + C_{t_i} + E_{t_i}$, with $C_{t_i}$ and $E_{t_i}$ as in \eqref{eq: Eti} and \eqref{eq: def C}, respectively). Then, an extra term $n^{\epsilon}$ appears naturally looking for a bound for the jump part (see Lemma \ref{lemma: P omega B complementare} and its proof), which results in the final stronger condition gathered in Assumption \ref{ass: step}.

We analyse the quantity $\mathbb{E}[\left \| \w{g}_{\w{m}} - g \right \|_n^2 ]$ in the following theorem, whose proof is relegated in Section \ref{S: proof both}.
\begin{theorem}
Suppose that Assumptions 1,2,3,4,5,6 hold. If $\Delta_n \rightarrow 0$, then the estimator $\w{g}_{\w{m}_g}$ of $g$ on $A$ satisfies, for any $1<\tilde{\varepsilon} < 0$,
$$\mathbb{E}\left[\left \| \w{g}_{\w{m}_g} - g \right \|_n^2\right] \le  C_1 \inf_{m \in \cM_n} \left \{\inf_{t\in\mathcal{S}_m} \left \| t - g \right \|_{\pi^X}^2 + \text{\rm pen}_g(m)  \right \} + C_2 \Delta_n^{1 - \tilde{\varepsilon}} + \frac{C_3}{n^2 \Delta_n^\frac{3}{2}} + \frac{C_4}{n \Delta_n}$$
where $C_1>1$ is a numerical constants and $C_2, C_3, C_4$ are positive constants depending on $\Delta_n, a_1, \sigma_1$ in particular.
\label{th: estim both adaptive}
\end{theorem}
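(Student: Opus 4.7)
The plan is to follow the standard model-selection scheme for penalized least-squares contrasts, adapted to the jump-diffusion decomposition \eqref{eq: Tipourg} of $T_{t_i}$ and to the structure of the noise terms $A_{t_i}, B_{t_i}, C_{t_i}, E_{t_i}$ whose moment properties are collected in Proposition \ref{prop: size A B C E}. Compared with the proof of Theorem \ref{th: vol adaptive}, two new difficulties appear: the centered jump contribution $E_{t_i}$ is no longer truncated and has conditional variance of order $\Delta_{n,i}^{-1}$, and the term $C_{t_i}$ is only centered with respect to $\sigma(X_{t_i})$, not with respect to the full $\mathcal{F}_{t_i}$.

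First I would start from the defining inequality
$$\gamma_{n,M}(\w{g}_{\w{m}_g}) + \pen_g(\w{m}_g) \le \gamma_{n,M}(g_m) + \pen_g(m),$$
valid for any $m \in \cM_n$ and any $g_m \in \mathcal{S}_m$ (taken as the $L^2(\pi^X)$-orthogonal projection of $g$ onto $\mathcal{S}_m$). Inserting \eqref{eq: Tipourg} and expanding the squared contrast yields, after rearrangement,
$$\|\w{g}_{\w{m}_g} - g\|_n^2 \le \|g_m - g\|_n^2 + \pen_g(m) - \pen_g(\w{m}_g) + 2\nu_n(\w{g}_{\w{m}_g} - g_m) + 2 R_n(\w{g}_{\w{m}_g} - g_m),$$
with $\nu_n(t) := \frac{1}{n}\sum_i t(X_{t_i})(B_{t_i}+C_{t_i}+E_{t_i})\one_A(X_{t_i})$ and $R_n(t) := \frac{1}{n}\sum_i t(X_{t_i}) A_{t_i} \one_A(X_{t_i})$. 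Cauchy--Schwarz combined with Proposition \ref{prop: size A B C E} item 1 absorbs $R_n$ into the $\Delta_n^{1-\tilde{\varepsilon}}$ remainder after the usual Young inequality $2xy \le \theta^{-1} x^2 + \theta y^2$.

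Applying the same Young trick to the $\nu_n$ term and normalizing reduces the analysis to bounding $\mathbb{E}[(\sup_{t \in B_{m,m'}} \nu_n(t)^2 - p(m,m'))_+]$ for $B_{m,m'} := \{t \in \mathcal{S}_m + \mathcal{S}_{m'} : \|t\|_{\pi^X} \le 1\}$, with $p(m,m')$ calibrated so that it is dominated by $\pen_g(m) + \pen_g(m')$. This is the core step and the main obstacle. To invoke Talagrand's concentration inequality, I would apply Berb\'ee's coupling lemma on alternating blocks of size $q_n$: the exponential $\beta$-mixing $\beta_X(t) \le K e^{-\gamma t}$ allows to replace the sample by independent block copies at a cost $O((n\Delta_n/q_n) e^{-\gamma q_n})$. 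The heavy tails of the jump contribution $E_{t_i}$ (its $L^p$-moments blow up as $\Delta_{n,i}^{-p/2+1}$) force an additional truncation of the per-block sum at a polynomial level $L_n$, and it is precisely the simultaneous constraint of keeping $L_n$ below the Talagrand scale and the truncation bias negligible that forces the reinforced discretization condition of Assumption \ref{ass: step}; cf.\ Lemma \ref{lemma: P omega B complementare}. The variance input into Talagrand is then computed via Proposition \ref{prop: size A B C E}: $B_{t_i}$ and $E_{t_i}$ are $(\mathcal{F}_{t_i})$-martingale increments and readily contribute $O((\sigma_1^4 + a_1^4) D_{m \vee m'}/(n\Delta_n))$, whereas $C_{t_i}$, being only $\sigma(X_{t_i})$-centered, requires a covariance expansion relying on the exponential $\beta$-mixing (this is Lemma \ref{lemma: variance Cti}), and it is exactly this expansion that produces the extra $D_{m \vee m'}^{2\varepsilon}$ factor appearing in the shape of $\pen_g$ in \eqref{eq:peng}.

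To conclude, I would choose $\kappa_2$ large enough so that $p(m,m') \le \pen_g(m) + \pen_g(m')$, sum the residual exponential terms over $m' \in \cM_n$ using Assumption \ref{ass: subspace} item 3, and transfer from the empirical norm to $\|\cdot\|_{\pi^X}$ on the event $\Omega_n := \{|\,\|t\|_n^2 - \|t\|_{\pi^X}^2| \le \tfrac{1}{2}\|t\|_{\pi^X}^2 \text{ for all } t \in \tilde{\mathcal{S}}_n\}$. A Bernstein-type bound controls $\mathbb{P}(\Omega_n^c)$ and, combined with a crude $L^4$ bound on $\w{g}_{\w{m}_g}$, yields the $C_3/(n^2 \Delta_n^{3/2})$ remainder, while the residuals from the Berb\'ee coupling and from the summation of the Talagrand tails provide the $C_4/(n\Delta_n)$ term. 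The genuinely delicate step is Step 3: reconciling the $L^p$-unbounded jump contribution $E_{t_i}$ with Talagrand's boundedness requirement while respecting Berb\'ee's mixing-rate budget, which is the single reason one must pass from \eqref{eq: cond delta} to the stronger Assumption \ref{ass: step}.
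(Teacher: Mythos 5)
Your proposal follows essentially the same route as the paper: start from $\gamma_{n,M}(\w g_{\w m_g})+\pen_g(\w m_g)\le\gamma_{n,M}(g_m)+\pen_g(m)$, isolate the process $\nu_n(t)=\frac1n\sum_i(B_{t_i}+C_{t_i}+E_{t_i})t(X_{t_i})$, replace the dependent sample by independent block copies via Berbée's coupling, truncate the block sums on an event $\Omega_B$ (controlled by Lemma \ref{lemma: P omega B complementare}), apply Talagrand on the truncated versions, and split between $\Omega_n$ and $\Omega_n^c$, with Assumption \ref{ass: step} entering exactly where you say it does. The structure, the key lemmas and the source of each remainder term match the paper's proof.

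One small misattribution is worth flagging. You trace the extra $D^{2\varepsilon}$ factor in $\pen_g$ solely to the mixing-based covariance expansion for $C_{t_i}$ (Lemma \ref{lemma: variance Cti}). In fact the factor also arises from the conditionally centered $E_{t_i}$: since $\mathbb{E}[E_{t_i}^2\mid\mathcal{F}_{t_i}]\lesssim \Delta_{n,i}^{-1}\sum_j\lambda^{(j)}_{t_i}$ depends on the unobserved $\lambda_{t_i}$, the Hölder step used to decouple $\tilde\psi_l^2(X_{t_i})$ from $\lvert\lambda_{t_i}\rvert$ in equation \eqref{eq: nu 1} already costs $D_m^{2\varepsilon}$ before $C_{t_i}$ even appears; and in the Talagrand step the variance input $v$ is formally furnished by Lemma \ref{lemma: variance U00}, which bundles all three of $B,C,E$ over a block. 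Neither point affects the soundness or the conclusion, but the $D^{2\varepsilon}$ is genuinely driven by the Hawkes intensity (through both $E$ and $C$), not by $C$ alone.
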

This result guarantees that our final data-driven estimator $\w g_{\w m}$ realizes automatically the best compromise between the bias term and the penalty term, and thus reaches the same rate obtained when the regularity of the true function $g$ is known. Note that here since it is more difficult to estimate $g$ because we have to deal with the conditional expectation of the intensity $f$, the last two error terms are larger than the ones obtained in Theorem \ref{th: vol adaptive} for the estimation of $\sigma^2$.
 
\section{A strategy to approach the jump coefficient}{\label{S: estim a}}
\label{sec:a}
The challenge is to get an estimator of the coefficient $a(\cdot)$.
Let us first remind the reader of the notation $f(x) := \sum_{j = 1}^M \mathbb{E}[\lambda_{t_i}^{(j)}|X_{t_i} = x]$ (see Equation \eqref{eq:f}) and 
$$
g(x) = \sigma^2(x) + a^2(x) f(x).
$$
Thus, a natural idea is to replace $f$ in the previous equation by an estimator, and then, to study an estimator of $a(\cdot)$ of the form $\displaystyle \frac{\w{g}(x)-\w{\sigma}^2(x)}{\w{f}(x)}$.
This is not a simple issue and let us discuss the estimation of $f$ later in the section. 
Assuming that an estimator of $f$ is known and denoted $\w{f}_h$ where $h>0$ denotes a tunning parameter.

Then, 
we also assume that $f>f_0$ on $A$.
We then define: 
$$\w{a}_z^2:=\frac{\w{g}_{m_2}(x)-\w{\sigma}^2_{m_1}(x)}{\w{f}_h(x)}\one_{\w{f}_h(x) > f_0/2} $$
with $z= (m_1, m_2, h)$. 
Let us study this estimator, for the empirical norm. 
Due to the disjoint support of the two terms and together with Cauchy-Schwarz inequality, we obtain
\begin{eqnarray*}
\|\w{a}_z^2-a^2\|_n^2 &=&\left\| \left(
\frac{(\w{g}_{m_2}-g)}{\w{f}_h}
+ \frac{(\sigma^2-\w{\sigma}^2_{m_1})}{\w{f}_h}+ \frac{(g-\sigma^2)}{f}\frac{f-\w{f}_h}{\w{f}_h}
\right)\one_{\w{f}_h> f_0/2}  \right\|_n^2 + \left\|\frac{g-\sigma^2}{f} \one_{\w{f}_h < f_0/2} \right\|_n^2\\
&\leq&  \frac{12}{f_0^2}
\| \w{g}_{m_2}-g\|_n^2 + \frac{12}{f_0^2}
 \|\sigma^2-\w{\sigma}^2_{m_1}\|_n^2+ 3 \left\|a^2\left(\frac{f-\w{f}_h}{\w{f}_h}\right)
\one_{\w{f}_h> f_0/2}\right\|_n^2 \\
&&+ \frac{1}{n}\sum_{i=0}^{n-1} a^4(X_{t_i}) \one_{\w{f}_h(X_{t_i}) < f_0/2}.
\end{eqnarray*}
Besides, if $\w{f}_h \leq f_0/2$ then $|\w{f}_h-f| > f_0/2$ and as $a^2(\cdot)<a_1^2$ finally:
\begin{eqnarray*}
\E[\|\w{a}_z^2-a^2\|_n^2] 
&\leq&  \frac{12}{f_0^2}
\E[\| \w{g}_{m_2}-g\|_n^2] + \frac{12}{f_0^2}
 \E[\|\sigma^2-\w{\sigma}^2_{m_1}\|_n^2]+ 
 \frac{12 a_1^4}{f_0^2} \E\left[\left\|{f-\w{f}_h}\right\|_n^2\right] \\
 &&+ \frac{a_1^4}{n}\sum_{i=0}^{n-1}  \mathbb{P}(|\w{f}_h(X_{t_i})-f(X_{t_i})|>f_0/2).
\end{eqnarray*}
And by Markov's inequality, we obtain: 
\begin{eqnarray}
\label{eq:OIa}
\E[\|\w{a}_z^2-a^2\|_n^2] 
&\leq&  \frac{12}{f_0^2}\left(
\E[\| \w{g}_{m_2}-g\|_n^2] + 
 \E[\|\w{\sigma}^2_{m_1}-\sigma^2\|_n^2]+ 
2 a_1^4 \E\left[\left\|{f-\w{f}_h}\right\|_n^2\right]\right) .
\end{eqnarray}

This equation teaches us that the empirical risk of the estimator $\w{a}_z$ is upper bounded by the sum of the three empirical risks of the estimators of the functions $g,\sigma^2, f$. The first two are controlled in Theorem \ref{th: vol adaptive} and \ref{th: estim both adaptive}. The last one is more classic. The Nadaraya-Watson estimator can be studied with one or two bandwidth parameters.

Finally, the triplet of parameters $z=(m_1,m_2,h)$ must be chosen in a collection. A first way to do it is to use the model selection proposed in the paper for $ \w{\sigma}^2_{m_1},\w{g}_{m_2}$ and then select $h$ through cross-validation for example, obtaining finally $\w{z}=(\w{m}_\sigma, \w{m}_g, \w{h})$. Another possible way would consist in defining a new selection procedure for the triplet, for example a Goldenshluger-Lepski type, as it is proposed in \cite{comtemarie}.
Nevertheless, the authors mention that it seems numerically less performing than to use the one-bandwidth leave-one-out cross validation method in the Nadaraya-watson estimator context.

Then, we choose to study the first methodology in the next Section, because it is directly implementable using the built adaptive estimators of $\sigma^2$ and $g$.
Besides, the risk bound obtained on $\w{a}_z$ in Equation \eqref{eq:OIa} suggests that the better the three functions $\sigma^2, g, f$ are estimated, the better the estimation of $a$ will be.

\begin{remark}
Let us note here that $f$ can be lower bounded by construction. Indeed, its definition jointly with the fact that $\lambda_{t_i}^{(j)} > \zeta_j$ because of the positiveness of $h_{i, j}$, provides us the wanted lower bound. For $\w{a}_z^2$ to be an estimator, $f_0$ must be known or estimated.
\end{remark}

\paragraph{Estimation of $f$} For sake of simplicity let us assume that $M=1$.
   We have that $f(x_{t_k})=\E[\lambda_{t_k}|X_{t_k}=x_{t_k}]$ for all $k$. 
Thus, $f$ depends on the conditional intensity $\lambda$; thus, the estimator of $f$ will also depend on the estimator of $\lambda$. In addition, to estimate $\lambda$, we need more data: we already observe the process $X$ on discrete times, but we also need to observe the jump times (which are not assumed to be known in the above).

Let us assume that we have at our disposal in addition to the $(X_{t_i})_i$'s the sequence $T_j$'s of jump times. 
Now, as the Hawkes process is assumed to have exponential kernel, this estimation can simply be done using likelihood contrast estimator for example, and we denote $\w{\lambda}_{t}$ the estimator of the intensity process at time $t$ (which do not depend on $X$).
Then, function $f$ can be estimated through a Nadaraya-Watson type estimator defined as
       $$\w{f}_h^{NW}(x)=\sum_{k=1}^n \frac{K_h(x-X_{t_k})}{\sum_{i=1}^n K_h(x-X_{t_i})} \w{\lambda}_{t_k}.$$
 The parameter, $h$ can be chosen using cross-validation for simplicity. 
Under strong assumptions, the risk of $\w{f}_{h}^{NW}$ is bounded by the risk of the numerator and by the risk of the denominator. Nevertheless here, to get a bound for the risk of the estimator we need to get a bound for the numerator which does not seem to be a direct computation. This is not solved in the present paper and will be the object of further considerations.

\section{Numerical experiments}{\label{S: numerical}}
\label{sec:numerical}

In this section, we present our numerical study on synthetic data. 
\subsection{Simulated data}
%
We simulate the Hawkes process $N$ with $M=1$ for simplicity, and here we denote $(T_k)_k$ the sequence of jump times. 
In fact, the multidimensional structure of the Hawkes process allows to consider a lot of kinds of data, but what is impacting the dynamic of $X$ is the cumulative Hawkes process, thus in that sense we do not lose generality taking $M=1$.
In this case, the intensity process is written as
$$\displaystyle \lambda_t= \xi
 +(\lambda_0 -\xi) e^{-\alpha t}
+ \sum_{T_k < t} c e^{- \alpha (t-T_k)}.$$
The initial conditions $X_0,\lambda_0$ should be simulated according to the invariant distribution (and $\lambda_0$ should be larger than $\xi>0$). This measure of probability is not explicit. Thus we choose: $\lambda_0=\xi $ and $X_0=2$ in the examples. Also, the exogenous intensities $\xi$ is chosen equal to $0.5$, the coefficient $c$ is equal to $0.4$ and $\alpha=5$.
    

Then we simulate $(X_\Delta, \dots X_{(n+1)\Delta})$ from an Euler scheme with a constant time step $\Delta_i=\Delta$. Because of the additional jump term (when $a \neq 0$), it is not possible to use classical more sophisticated scheme to the best of our knowledge. 
A simulation algorithm is also detailed in \cite{DLL} Section 2.3.

To challenge the proposed methodology, we investigate different kinds of models. In this section, we present the results for four models, which are the following

\begin{enumerate}[label=(\alph*)]
\item $b(x)= -4x$, $\sigma(x)=1$, $a(x)=\sqrt{2+0.5 \sin(x)}$,
\item $b(x)= -2x + \sin(x)$, $\sigma(x)=\sqrt{(3+x^2)/(1+x^2)}$,  $a(x)=1$, 
\item $b(x)= -2x$, $\sigma(x)=\sqrt{1+x^2}$, $a(x)=1$,
\item $b(x)= -2x$, $\sigma(x)=\sqrt{1+x^2}$, $a(x)=x\one_{[-5,5]}+5\one_{(-\infty,-5)}-5\one_{(5,+\infty)}$. 
\end{enumerate}

The drift is chosen linear to satisfy the assumptions and as it is not of interest to study the estimation of $b$ here, keeping  simple drift coefficient, let us focus on the differences observed due to the coefficients $\sigma$ and $a$.
For example, in models c) and d), $\sigma$ does not satisfy Assumption \ref{ass: X}.
%
Let us now detail the numerical estimation strategy. 

\subsection{Computation of nonparametric estimators}
It is important to remind the reader that the estimation procedures are only based on the observations $(X_{k\Delta})_{k=0, \ldots, n} $.
Indeed, the estimators $\w{\sigma}^2_{\w{m}_\sigma}$ and $\w{g}_{\w{m}_g}$ of $\sigma^2$ and $g$ respectively defined by \eqref{eq: def estimator sigma} and \eqref{eq: def estimator g}, are based on the statistics:
$$T_{k\Delta}= \frac{(X_{(k+1)\Delta}-X_{k\Delta})^2}{\Delta},~ k= 0, \ldots, n-1.$$
\paragraph{Estimation of $\sigma^2$.}
To compute $\w{\sigma}^2_m$ we use a version of the truncated quadratic variation through a function $\varphi$
that vanishes when the increments of the data are too large compared to the standard increments of a continuous diffusion process. Precisely, we choose
\begin{equation}\label{eq:Tphi}
T^{\varphi}_{k\Delta}:=  T_{k\Delta} \times\varphi\left(\frac{X_{(k+1)\Delta}-X_{k\Delta}}{ \Delta^\beta} \right)  
;\quad  \varphi(x) =\begin{cases} & 1 \quad |x| <1 \\ & e^{1/3+ 1/(|x|^2-4)}  \\ & 0 \quad |x| \geq 2 \end{cases}.
\end{equation}
This choice for the smooth function $\varphi$ is discussed in \cite{Unbiased}.

\paragraph{Estimation of $g$.}
As far as the estimation of $g:= \sigma^2+ a^2 \times f$ is concerned, we do not know the true conditional expectations $f(x_{t_k})=\E[\lambda_{t_k}|X_{t_k}=x_{t_k}]$ for all $k$. Thus we compare the estimations of $g$ to the approximate function 
$
\tilde{g}(x)=\sigma^2(x)+a^2(x)\times \w{f}^{NW}_{\w{h}(x)}$
where the function $f(x)=\dfrac{\int z\pi(x,z) dz}{\pi_X(x)}$, which corresponds to $\E[\lambda|X=x]$, is estimated with the classical Nadaraya-Watson estimator $\w{f}^{NW}_{{h}}(x)$, where $h$ is the bandwidth parameter. 
To do so, we use the R-package
\texttt{ksmooth}. Then, $\w{h}$ is chosen through a cross-validation leave-one-out procedure.


\paragraph{Choice of the subspaces of $L^2(A)$}
The spaces $\mathcal{S}_m$ are generated by the trigonometric basis. The maximal dimension $N_n$ is chosen equal to $20$ for this study. The theoretical dimension $\lfloor\sqrt{n\Delta}/n^\varepsilon\log(n)\rfloor$ is often too small in practice since we have to consider higher dimension to estimate non-regular functions.

In the theoretical part, the estimation is done on a fixed compact interval $A$. Here it is slightly different. We consider for each model the random data range as the estimation interval. This is more adapted to a real-life data set situation.









\subsection{Details on the calibration of the constants}

Let us remind the reader that the two penalty functions $\pen_\sigma$ are given in Equation \eqref{eq:pensig} and $\pen_g$ given in Equation \eqref{eq:peng}. 
We consider here the limit scenario where $\varepsilon=0$ and the penalties are both linear in the dimension.
They depend on constants named $\kappa_1, \kappa_2$. These constants need to be chosen once for all for each estimator in order to compute the final adaptive estimators $\w{\sigma}^2_{\w{m}_{\textcolor{blue}{\sigma}}}$ and $\w{g}_{\w{m}_{\textcolor{blue}{g}}}$. We explain now how these choices are made.

\paragraph{Choice for the universal constants.}

In order to choose the universal constants $\kappa_1$ and $\kappa_2$
we investigate models varying $b, a, \sigma^2$ (different from those used to validate the procedure later on) for $n \in \{100, 1000, 10000\}$ and $\Delta \in \{0.1, 0.01\}$. We compute Monte-Carlo estimators of the risks $ \E[ \|\w{\sigma}^2_{\w{m}_{\textcolor{blue}{\sigma}}}-\sigma^2 \|_n^2]$ and $ \E[ \|\w{g}^2_{\w{m}_{\textcolor{blue}{g}}}-\tilde{g} \|_n^2]$. We choose to do $N_{\text{rep}}=1000$ repetitions to estimate this expectation by the average: 
 $$
 \frac{1}{N_{\text{rep}}}\sum_{k=1}^{N_{\text{rep}}} \|\w{\sigma}^{2,(k)}_{\w{m}_{\textcolor{blue}{\sigma}}}-\sigma^2 \|_n^2 \quad \text{and} \quad \frac{1}{N_{\text{rep}}}\sum_{k=1}^{N_{\text{rep}}} \|\w{g}^{(k)}_{\w{m}_{\textcolor{blue}{\sigma}}}-\tilde{g} \|_n^2.$$
Finally, comparing the risks as functions of $\kappa_1, \kappa_2$ leads to select values making a good compromise overall experiences.
Applying this procedure, we finally choose $\kappa_1=100$ and $\kappa_2=100$.



\paragraph{Choice for the threshold $\beta$.}

The parameter $\beta$ appears in Equation \eqref{eq:Tphi}. This parameter helps the algorithm to decide if the process has jumped or not. The theoretical range of values is $(1/4, 1/2)$. We choose to work with $\beta=1/4+ 0.01$.

\paragraph{Choice for the bandwidth $h$.}
The bandwidth $h$ in the Nadaraya-Watson estimator of the conditional expectation is chosen through a leave-one-out cross-validation procedure.
Since the true conditional expectation is unknown, we focus on the estimation of $\widetilde{g}$, which depends on this estimator anyway. Indeed it is the estimation procedure of $g$ that is evaluated. 
Other choices for the best bandwidth exist as the  Goldenshluger and Lepski method \cite{GL} or a Penalized Comparison to Overfitting \cite{PCO}. 

\subsection{Results: estimation of the empirical risk}

As for the calibration phase, we compute Monte-Carlo estimators of the empirical risks. We choose to do $N_{\text{rep}}=1000$ repetitions to estimate this expectation by the average on the simulations.
In the risk tables \ref{tab:errorsig} and \ref{tab:errorg}, we present for the three models and different values of $(\Delta, n)$: the average of the estimated risk over $1000$ simulations (MISE) and the standard deviation in the brackets. 

Also, we print the result for the \textit{oracle} function in both cases. Indeed, as on simulations we know functions $\sigma^2, \tilde{g}$, we can compute the estimator in the collection $\cM_n= \{1, \ldots, N_n\}$ which minimises in $m$ the errors $\|\w{\sigma}^2_{{m}}- \sigma^2\|_n^2$ and $\|\w{g}_{{m}}- \tilde{g}\|_n^2$. Let us denote the oracle estimators 
$\w{\sigma}^2_{m^*}$ and $\w{g}_{m^*}$ respectively. 
These are not true estimators as they are not available in practice. Nevertheless, it is the benchmark. 
The goal of this numerical study is thus to see how close the risk results of $\w{\sigma}^2_{\w{m}_{\textcolor{blue}{\sigma}}},~ \w{g}^2_{\w{m}_{\textcolor{blue}{g}}}$ are to the risks of 
these two \textit{oracle} functions.

Let us detail the result for each estimator. 

\paragraph{Estimation of $\sigma^2$.}

\begin{figure}
\centering
\includegraphics[width=13cm, height=7cm]{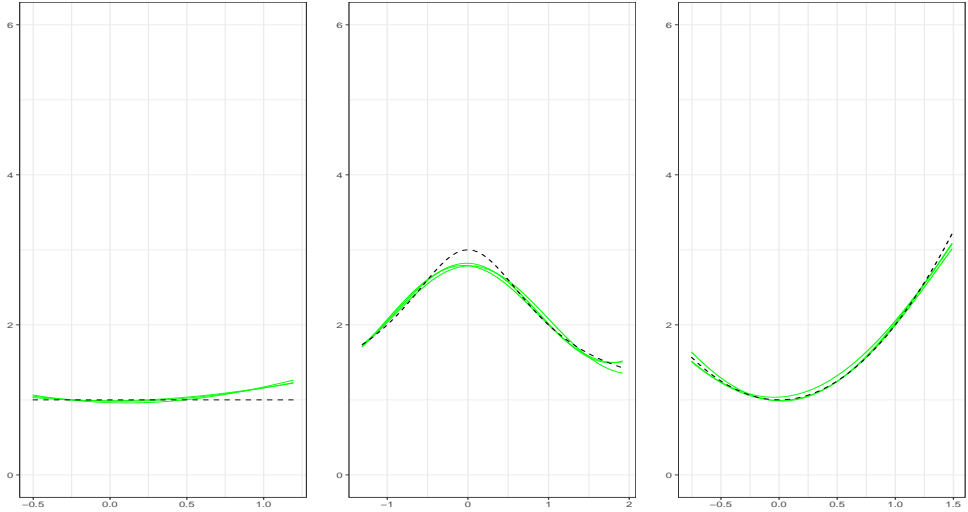}
\caption{Models (a),(b),(c) with $n=10000$, $\Delta=0.01$. Three final estimators are plain green (plain line), true $\sigma^2$ plain black (dotted line)}
\label{fig:explsigma}
\end{figure}

Figure \ref{fig:explsigma} shows for models (a),(b),(c), three estimators $\w{\sigma}^2_{\w{m}_{\textcolor{blue}{\sigma}}}$ in green (light grey) and the true function $\sigma^2$ in black (dotted line). We can appreciate here the good reconstruction of the function $\sigma^2$ by our estimator.

Table \ref{tab:errorsig} sums up the results of the estimator $\w{\sigma}^2_{\w{m}_{\textcolor{blue}{\sigma}}}$ for the different models and different parameter choices. We present also the results for the \textit{oracle} estimator $\w{\sigma}^2_{m^*}$ as it has been said previously.

The estimations of the MISE and the standard deviation are really close to the oracle ones. As it has been shown in the theoretical part, we can notice that the MISE decreases when $n$ increases. Besides, as the variance term is proportional to $1/n$ when $n$ is fixed and large enough, we can see the clear influence of $\Delta$ from $0.1$ to $0.01$, the MISEs are divided at least by $10$. Model (c) seems to be the more challenging for the procedure.

\begin{table}
\centering
{\small
{\setlength{\tabcolsep}{4pt}
\begin{tabular}{c||cccccc}
\hline
$\Delta,n$ &  \multicolumn{2}{|c|}{ $\Delta=0.1 $  $n=1000$} & \multicolumn{2}{|c|}{ $\Delta=0.1  $  $n=10000 $}  & \multicolumn{2}{|c}{ $\Delta=0.01  $  $n= 10000$}\\
\hline
Model &  $\w{\sigma}_{\w{m}_{\textcolor{blue}{\sigma}}}$ & $\w{\sigma}_{{m}^*}$ &  $\w{\sigma}_{\w{m}_{\textcolor{blue}{\sigma}}}$ & $\w{\sigma}_{{m}^*}$ &$\w{\sigma}_{\w{m}_{\textcolor{blue}{\sigma}}}$ & $\w{\sigma}_{{m}^*}$\\
\hline
\hline
(a) & 0.410 (0.280) & 0.361 (0.285) &  0.385 (0.122)& 0.278 ( 0.088) & 0.015 (0.028) & 0.010 (0.023)  \\
\hline
(b) & 0.187 (1.678) &0.107 (0.989) & 0.046 (1.162) & 0.027 (1.014) & 0.005 (0.015) & 0.005 (0.008) \\
\hline
(c) & 1.201 (0.216) & 0.798 (0.208) & 0.452 (0.062)&  0.366 (0.042) & 0.015 (0.012) & 0.008 (0.007)\\
\hline
 \end{tabular}}}
 \caption{Estimation on a compact interval. Average and standard deviation of the estimated risks $\|\w{\sigma}^2_{\w{m}_{\textcolor{blue}{\sigma}}} - \sigma^2\|^2_{n}$ and $\|\w{\sigma}^2_{{m}^*} - \sigma^2\|^2_{n}$ computed over $1000$ repetitions.}
 \label{tab:errorsig}
 \end{table}

 \paragraph{Estimation of $\tilde{g}$.}

Figure \ref{fig:expg} shows for each of the three models (a),(b),(c), three estimators $\w{g}_{\w{m}_{\textcolor{blue}{g}}}$ of $\tilde g$ in green (light gray) and function $\tilde{g}$ in black (dotted line).
The beams of the three realizations of the estimator are satisfying.

\begin{figure}
\centering
\includegraphics[width=13cm, height=7cm]{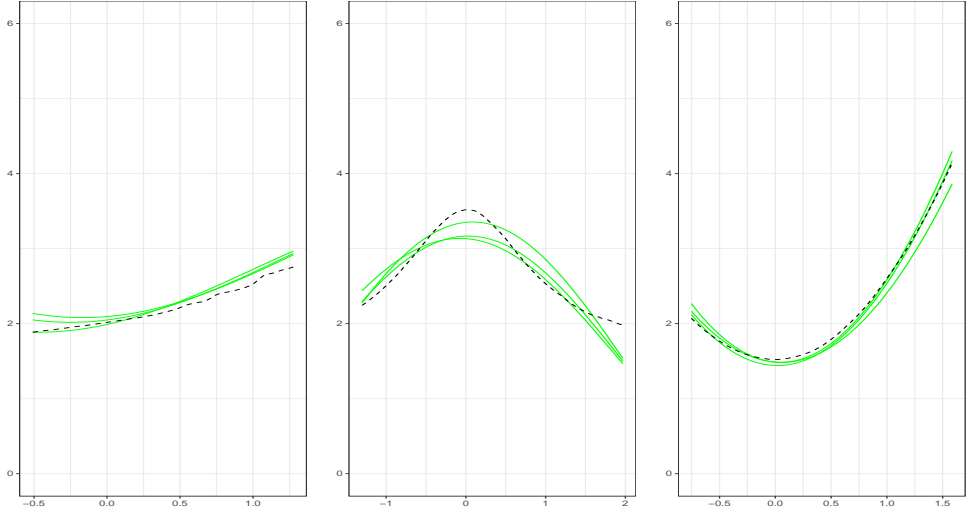}
\caption{Models (a),(b),(c) with $n=10000$, $\Delta=0.01$. Three final estimators of $\tilde g$ are plain green (plain line) and $\tilde g$ plain black (dotted line).}
\label{fig:expg}
\end{figure}

We observe that the procedure has difficulties in Model (a), and we confirm that impression in Table \ref{tab:errorg} below with the estimation of the risk. 
But for the two other models, the estimators seem closer to the true function. The estimation appears to work better in Model (c) than in Model (b), and this is also corroborated by the estimation of the risk given in Table \ref{tab:errorg}. 

Table \ref{tab:errorg} gives the Mean Integrated Squared Errors (MISEs) of the estimator $\w g_{\w{m}_{\textcolor{blue}{g}}}$ obtained from our procedure  and of the \textit{oracle} estimator $\w g_{m^*}$, which is the best one in the collection for the three different models with different values of $\Delta$ and $n$. 

As expected, we observe that the MISEs are smaller when $n$ increases and $\Delta$ decreases. The different Models (a), (b), (c) gives relatively good results even if, as already said, it seems a little bit more difficult to estimate correctly $g$ in Model (a), probably because the volatility $\sigma^2$ is constant in this case. For the two other models, the estimators seem to be better. Compared with the results on the estimation of $\sigma^2$, the variance is proportional to $1/(n\Delta)$, and thus, the risks are greater in general.

\begin{table}
\centering
{\setlength{\tabcolsep}{4pt}
\begin{tabular}{c||c|c|c|c|c|c}
\hline
$\Delta,n$ &  \multicolumn{2}{|c|}{ $\Delta=0.1 $  $n=1000$} &  \multicolumn{2}{|c|}{ $\Delta=0.1  $  $n=10000 $}  & \multicolumn{2}{|c}{ $\Delta=0.01  $  $n= 10000$}\\
\hline
Model &  $\w{g}_{\w{m}_{\textcolor{blue}{g}}}$ & $\w{g}_{{m}^*}$ & $\w{g}_{\w{m}_{\textcolor{blue}{g}}}$ & $\w{g}_{{m}^*}$& $\w{g}_{\w{m}_{\textcolor{blue}{g}}}$ & $\w{g}_{{m}^*}$\\
\hline
\hline
(a)& 1.363 (0.715) & 0.895 (0.606) & 0.948 (0.193)&0.735 (0.195)&0.129 (0.141)& 0.109 (0.120)\\
\hline
(b)& 0.915 (0.520) & 0.474 (0.393) &0.313 (0.174)&0.198 (0.079)&0.240 (0.100)&0.098 (0.072)\\
\hline
(c)& 0.707 (0.964) & 0.311 (0.320) &0.236 (0.202)&0.099 (0.056)&0.073 (0.130)&0.035 (0.035)\\
\hline
 \end{tabular}}
 \caption{Estimation on a compact interval. Average and standard deviation of the estimated risks $\|\w{g}_{\w{m}_{\textcolor{blue}{g}}} - \tilde{g}\|^2_{n}$ and $\|\w{g}_{{m}^*} - \tilde{g}\|^2_{n}$ computed over $1000$ repetitions.}
 \label{tab:errorg}
 \end{table}

%

\subsection{Estimation of $a^2$}

As explained in Section \ref{S: estim a}, the challenge is to get an approximation of the coefficient $a$ from the two previous estimators. A main numerical issue is that, according to the theoretical and numerical results, the best setting for the estimation of $\sigma^2$ and $g$ are not the same. Indeed, the smallest $\Delta$ is, the best the estimation of $\sigma^2$ is, as only large $n$ is important, and on the contrary, $n\Delta$ needs to be large to estimate $g$ properly. 

To overcome this difficulty, we choose a thin discretization of the trajectories of $X$. 
We simulate here discrete path of the process $X$ at first with $\Delta= 10^{-3},~ n= 10^5$. 
Then, we first compute $\w{g}_{\w{m}_{\textcolor{blue}{g}}}$ the estimator of $\tilde g$ on all the observations. Secondly, we compute $\w{\sigma}^2_{\w{m}_{\textcolor{blue}{\sigma}}}$ the estimator of $\sigma^2$ from a subsample of the discretized observations (one over ten observations thus $\Delta= 0.01, ~n=10000$).

We finally compute 
the estimator 
$$
\w{a}^2(x)=\dfrac{\w{g}_{\w{m}_{\textcolor{blue}{g}}}(x)-\w\sigma_{\w{m}_{\textcolor{blue}{\sigma}}}^2(x)}{\w{f}^{NW}_{\w{h}}(x)}.
$$
This procedure is presented in Section \ref{S: estim a}. 
We have plugged-in $\w{a}^2$ the final estimators of $\sigma^2, g$.

We present on Figure \ref{fig:expla} the results obtained on model (d) in which neither $\sigma^2$ nor $a$ are constant. Indeed, for the three other models, our procedure has difficulties estimating properly $g$, $\sigma^2$ and $a^2$, when one of the diffusion jump process parameters is constant. 
We see that the final estimator $\w a_{\w z}^2$ is not so far from the true function $a^2$ even if there are some fluctuations around the true function. This is understandable because we add the errors coming from the estimations of $\sigma^2$ and $g$ as we can see on Inequality (\ref{eq:OIa}). Moreover, it should not be forgotten that we do not know exactly $g$ and that we already make an error by estimating $\widetilde g$ instead of g, this error is then reflected in the estimate of $a^2$.

\begin{figure}
\centering
\includegraphics[width=13cm,height=7cm]{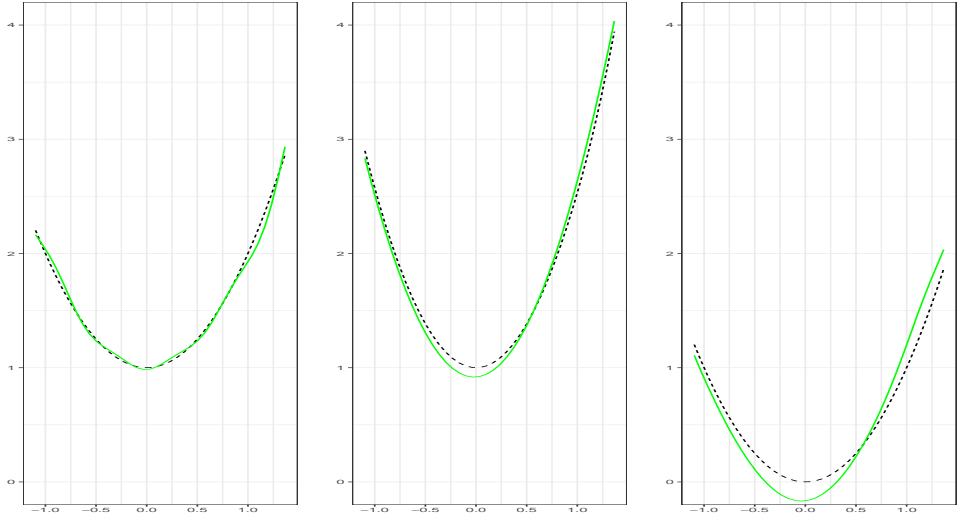}
\caption{Model (d). Final estimators $\w{g}_{\w{m}_2}$, $\w{\sigma}^2_{\w{m}_1}$ and $\w{a}$ are plain {\modch green (plain line)}, and true parameters $\tilde g$, $\sigma^2$ and $a^2$ in plain {\modch black (dotted line)} from left to right respectively.}
\label{fig:expla}
\end{figure}

\section{Discussion}
\label{sec:discussion}

This paper investigates the jump-diffusion model with jumps driven by a Hawkes process. 
This model is interesting to complete the collection of jump-diffusion models and consider dependency in the jump process. The dynamic of the trajectories obtained from this model is impacted by the Hawkes process, which acts independently of the diffusion process. 

This work focuses on the estimation of the unknown coefficients $\sigma^2$ and $a$. We propose a classical adaptive estimator of $\sigma^2$ based on the truncated increments of the observed discrete trajectory. This allows estimating the diffusion coefficient when no jump is detected. 

Then, we estimate the sum $g:=\sigma^2+ a^2 \times f$. Indeed, it is this function and not $\sigma^2+a^2$ that can be estimated. The multiplicative term $f$ is the sum of the conditional expectations of the jump process. 
This function can be estimated separately through a Nadaraya-Watson estimator. The proposed estimator of $g$ is built using all increments of the quadratic variation this time.

Furthermore, a main issue is to reach the jump coefficient $a$ from the two first estimators $\w{\sigma}^2_{\w{m}_\sigma}$ and $\w{g}_{\w{m}_g}$ for which the theoretical and numerical results are convincing.
The last section of this article answered this question partially. It is simple to build an estimator of $a$ from the two previous ones and the estimator of the unknown conditional intensity function $f$. 

Nevertheless, this is possible only if the jumps of the Hawkes process are observed, which is the case of the simulation study. Then, when real-life data arises, the jump times of the counting process must be known to be able to reach $a$ with our methodology. Otherwise, the issue remains an open question.


Then, the proposed estimator $\w{a}_z$, with $z=(m_1,m_2,h)$, is a quotient of estimators and the denominator must be lower bounded to ensure the proper definition of the estimator. This could be theoretically and numerically carefully studied and be the object for further works.

Finally, our analysis sheds light on the importance to further investigate the conditional intensity function $f$, dependent on the invariant density $\pi$. A future perspective would be to propose a kernel estimator for the invariant density $\pi$ and to study its behavior and its asymptotic properties deeply, following the same approach as in \cite{Strauch} and \cite{Chapitre 4}. A projection method is instead considered in \cite{Schmisser_Krell} to estimate the invariant density associated with a piecewise deterministic Markov process. Consequently, it will be possible to discuss the properties of the related estimator of $f$.

From the nonparametric estimation point of view, it should be interesting to extend the present estimation work to estimation on the real line instead of on a compact interval. 
\cite{CGC2020} brings a solution to deal with the estimation of the drift function on the all real line from repeated observations. The procedure may be extended to the present framework in future works.


\section*{Acknowledgements}

The authors particularly grateful for the constructive comments and suggestions for improvements made by the referees of the journal.

\section{Proofs}
\label{sec:proofs}
This section is devoted to the proofs of the results stated in Sections \ref{S: volatility} and \ref{S: volatility + jumps}. \\
One may observe that, concerning non-adaptive estimators, the proof of both Propositions \ref{prop: volatility} and \ref{prop: estim both} relies on the same scheme.  It consists in introducing the set $\Omega_n$ as in \eqref{eq: omega}, on which the norms $\left \| \cdot \right \|_{\pi^X}$ and $\left \| \cdot \right \|_{n}$ are equivalent, and to bound the risk on $\Omega_n$ and $\Omega_n^c$, respectively. On $\Omega_n^c$, a rough bound on the quantities we are considering is enough, as the probability of $\Omega_n^c$ is very small (see \eqref{eq: proba omega n c}). Hence, the idea to bound the risk on $\Omega_n^c$ in Proposition \ref{prop: volatility} and \ref{prop: estim both} is basically the same.
On $\Omega_n$, instead, there are main differences. Indeed, in Proposition \ref{prop: volatility}, it is enough to upper bound roughly both the bias and the jump terms, (to deal more in detail only with the Brownian part), while in Proposition \ref{prop: estim both} a in-depth study is required for $B_{t_i}$, $C_{t_i}$ and $E_{t_i}$. 
Such difference between the proofs for the estimation of $\sigma$ and $g$ is more highlighted in the analysis of the adaptive procedure. The proof of both Theorems \ref{th: vol adaptive} and \ref{th: estim both adaptive}, indeed, heavily relies on Talagrand inequality and, as for the non-adaptive procedure, for the estimation of $\sigma$ what really matters is the contribution of $B_{t_i}$, while for the estimation of $g$ also $C_{t_i}$ and $E_{t_i}$ are involved. It implies that, for the proof of Theorem \ref{th: estim both adaptive}, we are using Berbee's coupling method to get independent variables and truncation to make them bounded, starting from some variables in which also the jumps contribute; which is challenging.

\subsection{Proof of volatility estimation}{\label{S: proof volatility}}
Here we prove all the results stated in Section \ref{S: volatility}. We start proving Proposition \ref{prop: volatility}.
\subsubsection{Proof of Proposition \ref{prop: volatility}}
\begin{proof}
We want to obtain an upper bound for the empirical risk $\mathbb{E}[\left \| \w{\sigma}^2_m - \sigma^2 \right \|_n^2]$. First of all we remark that, if $t$ is a deterministic function, then it is $\mathbb{E}[\left \| t \right \|^2_n] = \left \| t \right \|^2_{\pi^X}$. \\
\\
By the definition of $T_{t_i}$ we have that 
\begin{eqnarray*}
\gamma_{n, M}(t)&:=& \frac{1}{n} \sum_{i = 0}^{n - 1} \left(t(X_{t_i})- T_{t_i}\varphi_{\Delta_{n,i}^\beta}(\Delta_i X)\right)^2{\modar \one_A(X_{t_i}) }\\
&=& \frac{1}{n} \sum_{i = 0}^{n - 1} \left(t(X_{t_i})- \sigma^2(X_{t_i}) - (\tilde{A}_{t_i} + B_{t_i} + E_{t_i}\varphi_{\Delta_{n,i}^\beta}(\Delta_i X) )\right)^2{\modar \one_A(X_{t_i}) }\\
&=& \left \| t - \sigma^2 \right \|_n^2 + \frac{1}{n} \sum_{i = 0}^{n - 1} (\tilde{A}_{t_i} + B_{t_i} + E_{t_i}\varphi_{\Delta_{n,i}^\beta}(\Delta_i X))^2{\modar \one_A(X_{t_i}) }\\
&&- \frac{2}{n} \sum_{i = 0}^{n - 1} \left(\tilde{A}_{t_i} + B_{t_i} + E_{t_i}\varphi_{\Delta_{n,i}^\beta}(\Delta_i X)\right)\left(t(X_{t_i})- \sigma^2(X_{t_i})\right){\modar \one_A(X_{t_i}). }
\end{eqnarray*}
As $\w{\sigma}^2_m$ minimizes $\gamma_{n, M}(t)$, for any $\sigma^2_m \in \mathcal{S}_m$ it is $\gamma_{n, M}(\w{\sigma}_m^2) \le \gamma_{n, M}(\sigma_m^2)$ and therefore
$$\left \| \w{\sigma}^2_m - \sigma^2 \right \|_n^2 \le \left \| \sigma_m^2 - \sigma^2 \right \|_n^2 + \frac{2}{n} \sum_{i = 0}^{n - 1} (\tilde{A}_{t_i} + B_{t_i} + E_{t_i}\varphi_{\Delta_{n,i}^\beta}(\Delta_i X))(\w{\sigma}^2_m(X_{t_i})- \sigma^2_m(X_{t_i})){\modar ,}$$
{\modar where in the last sum we can remove the indicator since $\w{\sigma}_m$ and $\sigma_m$ are compactly supported on $A$.}
Let us denote the contrast function
\begin{equation}
\nu_n (t) := \frac{1}{n} \sum_{i = 0}^{n - 1} B_{t_i} t (X_{t_i}).
\label{eq: def nu}
\end{equation} 
In the sequel, we will repeatedly use that, for $d>0$, it is $2 x y \leq x^{2} / d+d y^{2}$. It follows
\begin{eqnarray*}
\left \| \w{\sigma}^2_m - \sigma^2 \right \|_n^2 &\leq & \left \| \sigma^2_m - \sigma^2 \right \|_n^2 + \frac{d}{n} \sum_{i = 0}^{n - 1} \left(\tilde{A}_{t_i} +  E_{t_i}\varphi_{\Delta_{n,i}^\beta}(\Delta_i X) \right)^2 + \frac{1}{d} \left \| \w{\sigma}^2_m - \sigma^2_m \right \|_n^2   \nonumber\\
&&+ 2 \nu_n( {\sigma}^2_m - \w{\sigma}^2_m ).
\end{eqnarray*}
The linearity of the function $\nu_n$ in $t$ implies that 
$$
\abs{
2\nu_n (\w{\sigma}^2_m- {\sigma}^2_m)} = 
 2\|\w{\sigma}^2_m- {\sigma}^2_m \|_{\pi^X}
 \abs{\nu_n((\w{\sigma}^2_m- {\sigma}^2_m)/\|\w{\sigma}^2_m- {\sigma}^2_m\|_{\pi^X})} \leq 
2\|\w{\sigma}^2_m- {\sigma}^2_m \|_{\pi^X}\sup_{t \in \mathcal{B}_m} 
\abs{\nu_n(t)},
 $$
then, using again that $2 x y \le \frac{x^2}{d} + d y^2$, we obtain the upper bound
$$2{\abs{\nu_n (\w{\sigma}^2_m- {\sigma}^2_m) }}\leq 
\frac{1}d\|\w{\sigma}^2_m- {\sigma}^2_m \|^2_{\pi^X} + d \sup_{t \in \mathcal{B}_m} \nu^2_n(t)
 $$
 where $\mathcal{B}_m = \left \{ t \in S_m : \left \| t \right \|_{\pi^X}^2 \le 1\right \}$.
Finally, using Cauchy-Schwarz's inequality leads to
\begin{eqnarray}
\label{eq: prima di definire omega n}
\left \| \w{\sigma}^2_m - \sigma^2 \right \|_n^2 &\leq & \left \| \sigma^2_m - \sigma^2 \right \|_n^2 + \frac{2d}{n} \sum_{i = 0}^{n - 1} \tilde{A}_{t_i}^2 +  \frac{2d}{n} \sum_{i = 0}^{n - 1} E_{t_i}^2\varphi^2_{\Delta_{n,i}^\beta}(\Delta_i X)  + \frac{1}{d} \left \| \w{\sigma}^2_m - \sigma^2_m \right \|_n^2   \nonumber\\
&&
+ d \sup_{\mathcal{B}_m} \nu_n^2(t) + \frac{1}{d} \left \| \w{\sigma}^2_m - \sigma^2_m \right \|_{\pi^X}^2.
\end{eqnarray}
Let us set 
\begin{equation}\label{eq: omega}
\Omega_n := \left \{ \omega, \forall t \in \tilde{S}_n \backslash \{0 \}, \left|\frac{\left \| t \right \|_{n}^2}{\left \| t \right \|_{\pi^X}^2} - 1\right| \le \frac{1}{2} \right \}, 
\end{equation}
on which the norms $\left \| \cdot \right \|_{\pi^X}$ and $\left \| \cdot \right \|_{n}$ are equivalent. We now act differently to bound the risk on $\Omega_n$ and $\Omega_n^c$. 

\paragraph{Bound of the risk on $\Omega_n$.}
On $\Omega_n$, it is
$$\left \| \w{\sigma}^2_m - \sigma^2_m \right \|_{\pi^X}^2 \le 2 \left \| \w{\sigma}^2_m - \sigma^2_m \right \|_{n}^2 \le 4 \left \| \w{\sigma}^2_m - \sigma^2 \right \|_{n}^2 + 4 \left \| \sigma^2 - \sigma^2_m \right \|_{n}^2,$$
where in the last estimation we have used triangular inequality. In the same way we get
$$ \left \| \w{\sigma}^2_m - \sigma^2_m \right \|_{n}^2 \le 2 \left \| \w{\sigma}^2_m - \sigma^2 \right \|_{n}^2 + 2 \left \| \sigma^2 - \sigma^2_m \right \|_{n}^2.$$
Replacing them in \eqref{eq: prima di definire omega n} we obtain
\begin{eqnarray*}
\left \| \w{\sigma}^2_m - \sigma^2 \right \|_n^2 &\le & \left \| \sigma_m^2 - \sigma^2 \right \|_n^2 + \frac{2d}{n} \sum_{i = 0}^{n - 1} \tilde{A}_{t_i}^2 + \frac{2d}{n} \sum_{i = 0}^{n - 1} (E_{t_i}\varphi_{\Delta_{n,i}^\beta}(\Delta_i X))^2 +   d \sup_{t \in \mathcal{B}_m} \nu_n^2(t)\\
&&+ \frac{6}{d} \left \| \w{\sigma}^2_m - \sigma^2 \right \|_{n}^2 + \frac{6}{d} \left \| \sigma^2 - \sigma^2_m \right \|_{n}^2.
\end{eqnarray*}

We need $d$ to be more than $6$. {We take the optimal choice for $d$, which corresponds to $d= 12$, obtaining}
\begin{equation}
\left \| \w{\sigma}^2_m - \sigma^2 \right \|_n^2 \le {3} \left \| \sigma^2_m - \sigma^2 \right \|_n^2 + \frac{{ 48} }{n} \sum_{i = 0}^{n - 1} \tilde{A}_{t_i}^2 + \frac{{48}}{n} \sum_{i = 0}^{n - 1} (E_{t_i}\varphi_{\Delta_{n,i}^\beta}(\Delta_i X))^2 + { 24} \sup_{t \in \mathcal{B}_m} \nu_n^2(t).
\label{eq: d replaced}
\end{equation}
We denote as $(\psi_l)_l$ an orthonormal basis of $S_m$ for the $L^2_{\pi^X}$ norm (thus $\int_{\mathbb{R}} \psi^2_l (x) \pi^X (x) dx = 1$). Each $t \in \mathcal{B}_m$ can be written 
$$t = \sum_{l = 1}^{D_m} \alpha_l \psi_l, \qquad \mbox{with } \sum_{l = 1}^{D_m} \alpha_l^2 \le 1.$$
Then 
\begin{equation}
\sup_{t\in \mathcal{B}_m} \nu_n^2(t) = \sup_{\sum_{l = 1}^{D_m} \alpha_l^2 \le 1} \nu_n^2\left(\sum_{l = 1}^{D_m} \alpha_l \psi_l \right) \le \sup_{\sum_{l = 1}^{D_m} \alpha_l^2 \le 1} \left(\sum_{l = 1}^{D_m} \alpha_l^2\right)\left(\sum_{l = 1}^{D_m} \nu_n^2 (\psi_l) \right) = \sum_{l = 1}^{D_m} \nu_n^2 (\psi_l).
\label{eq: calcolo nu}
\end{equation}
To study the risk we need to evaluate the expected value. From \eqref{eq: d replaced}, \eqref{eq: calcolo nu} and using the first and the third points of Proposition \ref{lemma: size A B E}, we get
\begin{equation}
\E\left[\left \| \w{\sigma}^2_m - \sigma^2 \right \|_n^2 \one_{\Omega_n}\right] \le { 3} \mathbb{E}\left[ \left \| \sigma^2_m - \sigma^2 \right \|_n^2 \right] + c \Delta_n^{1 - \tilde{\varepsilon}} + c \Delta_n^{4 \beta - 1} + { 24} \sum_{l = 1}^{D_m} \mathbb{E}[\nu_n^2 (\psi_l)].
\label{eq: risk Omega unfinished}
\end{equation}
By the definition \eqref{eq: def nu} of $\nu_n$ it is
$$\nu_n (\psi_l) = \frac{1}{n} \sum_{i = 0}^{n - 1}B_{t_i} \psi_l (X_{t_i}).$$
As $B_{t_i}$ is conditionally centered, using the second point of Proposition \ref{lemma: size A B E}, it is
$$\sum_{l = 1}^{D_m} \mathbb{E}[\nu_{n}^2 (\psi_l)] \le \frac{c}{n^2} \sum_{i = 0}^{n - 1} \sum_{l = 1}^{D_m} \mathbb{E}[\psi^2_l(X_{t_i}) \mathbb{E}[B^2_{t_i}|\mathcal{F}_{t_i}]] \le \frac{c}{n^2} \sum_{i = 0}^{n - 1} \sum_{l = 1}^{D_m} \sigma_1^4 \mathbb{E}[\psi^2_l(X_{t_i})] \le \frac{c \sigma_1^4 D_m}{n}.$$
 Replacing the inequality here above in \eqref{eq: risk Omega unfinished} it yields
 $$\mathbb{E}\left[\left \| \w{\sigma}^2_m - \sigma^2 \right \|_n^2 \one_{\Omega_n}\right] \le { 3} \mathbb{E}\left[ \left \| \sigma^2_m - \sigma^2 \right \|_n^2 \right] + c \Delta_n^{4 \beta - 1 } + \frac{c \sigma_1^4 D_m}{n}.$$
 As { for any deterministic $t$ it is $\mathbb{E}[\left \| t \right\|_n] = \left \| t \right\|_{\pi^X}$, it follows}
\begin{equation}
\mathbb{E}[\left \| \w{\sigma}^2_m - \sigma^2 \right \|_n^2 \one_{\Omega_n}] \le { 3} \inf_{t \in \mathcal{S}_m} \left \| t - \sigma^2 \right \|_{\pi^X}^2 + c \Delta_n^{4 \beta - 1 } + \frac{c \sigma_1^4 D_m}{n}.
\label{eq: risk on Omega n}
\end{equation}
\paragraph{Bound of the risk on $\Omega_n^c$.}
The complementary space $\Omega_n^c$ of $\Omega_n$ given in Equation \eqref{eq: omega} is defined as:
$$\Omega_{n}^c  =\left\{\omega \in \Omega,\;\;\exists t^* \in \tilde{S}_{n}\backslash\{0\}, \left| \frac{\|t^*\|^2_{n}}{\|t^*\|_{\pi^X}^2}-1 \right| > 1/2 \right\}.
$$

Let us set $e = (e_{t_0},\ldots , e_{t_{n - 1}})$, where $e_{t_i} := T_{t_i}\varphi_{\Delta_{n,i}^\beta}(\Delta_i X) - \sigma^2 (X_{t_i}) = \tilde{A}_{t_i} + B_{t_i} + E_{t_i}\varphi_{\Delta_{n,i}^\beta}(\Delta_i X)$. Moreover 
$$\Pi_m T \varphi = \Pi_m (T_{t_0}\varphi_{\Delta_{n,0}^\beta}(\Delta_0 X),\ldots , T_{t_{n - 1}}\varphi_{\Delta_{n,n-1}^\beta}(\Delta_{n - 1} X)) = (\w{\sigma}^2_m (X_{t_0}),\ldots , \w{\sigma}^2_m (X_{t_{n - 1}})),$$
where $\Pi_m$ is the Euclidean orthogonal projection over $S_m$. Then, according to the projection definition, 
\begin{eqnarray*}
\left \| \w{\sigma}^2_m - \sigma^2 \right \|_n^2 &=& \left \| \Pi_m T \varphi - \sigma^2 \right \|_n^2 = \left \| \Pi_m T \varphi - \Pi_m \sigma^2 \right \|_n^2 + \left \| \Pi_m \sigma^2 - \sigma^2 \right \|_n^2 \\
&&\le \left \| T \varphi - \sigma^2 \right \|_n^2 + \left \| \sigma^2 \right \|_n^2 = \left \| e \right \|_n^2 + \left \| \sigma^2 \right \|_n^2.
\end{eqnarray*}
Therefore, from Cauchy -Schwarz inequality and the boundless of $\sigma^2(x)$,
\begin{eqnarray*}
\E\left[\left \| \w{\sigma}^2_m - \sigma^2 \right \|_n^2 \one_{\Omega_n^c}\right] &\le & \mathbb{E}\left[\left \| e \right \|_n^2 \one_{\Omega_n^c}\right] + \mathbb{E}\left[\left \| \sigma^2 \right \|_n^2 \one_{\Omega_n^c}\right] = \frac{1}{n } \sum_{i = 0}^{n - 1} \mathbb{E}[e_{t_i}^2 \one_{\Omega_n^c}  ] + \frac{1}{n } \sum_{i = 0}^{n - 1} \mathbb{E}[\sigma^4(X_{t_i}) \one_{\Omega_n^c} ] \\
&\leq &  \frac{1}{n } \sum_{i = 0}^{n - 1} \mathbb{E}[e_{t_i}^4]^\frac{1}{2} \mathbb{P}(\Omega_n^c)^\frac{1}{2} + \sigma_1^4 \mathbb{P}(\Omega_n^c).
\end{eqnarray*}
From Lemma 6.4 in \cite{Dion Lemler}, {if $\frac{n \Delta_n}{(\log n)^2} \rightarrow \infty$ and $N_n \le \frac{n \Delta_n}{(\log n)^2}$ for [DP] and [W] and $N_n^2 \le \frac{n \Delta_n}{(\log n)^2}$ for the collection [T], }then
\begin{equation}
\mathbb{P}(\Omega_n^c) \le \frac{c_0}{n^4}.
\label{eq: proba omega n c}    
\end{equation}
In the hypothesis of our proposition we have requested that $ \log n = o(\sqrt{n \Delta_n})$. As for $n$ going to $\infty$ we have $ \frac{(\log n)^2}{ n \Delta_n} < \frac{ \log n}{\sqrt{n \Delta_n}} \rightarrow 0$, the first condition in Lemma 6.4 in \cite{Dion Lemler} hold true. Regarding the bound on {$N_n$, we have required Assumption 5} and so we can apply the here above mentioned lemma, which yields \eqref{eq: proba omega n c}. \\
We are left to evaluate $\mathbb{E}[e_{t_i}^4]$. From Proposition \ref{lemma: size A B E} it follows
$$\mathbb{E}\left[e_{t_i}^4\right] \le \mathbb{E}\left[\tilde{A}_{t_i}^4 + B_{t_i}^4+ E_{t_i}^4 \varphi^4_{\Delta_{n,i}^\beta}(\Delta_i X) \right] \le c \Delta_n^{1 - \tilde{\varepsilon}} + c + c \Delta_{n}^{8 \beta -3} \le c \Delta_{n}^{0 \land 8 \beta -3} .$$
Putting the pieces together it yields
\begin{equation}
\E\left[\left \| \w{\sigma}^2_m - \sigma^2 \right \|_n^2 \one_{\Omega_n^c}\right] \le  \frac{c \Delta_{n}^{0 \land 4 \beta -\frac{3}{2}}}{n^2} + \frac{c}{n^4} \le \frac{c \Delta_{n}^{0 \land 4 \beta -\frac{3}{2}}}{n^2}.
\label{eq: risk on Omega n c}
\end{equation}
From \eqref{eq: risk on Omega n} and \eqref{eq: risk on Omega n c} it follows
\begin{equation*}
\E\left[\left \| \w{\sigma}^2_m - \sigma^2 \right \|_n^2 \right] \le {6} \inf_{t \in \mathcal{S}_m} \left \| t - \sigma^2 \right \|_{\pi^X}^2 +  \frac{C_1 \sigma_1^4 D_m}{n} +  C_2 \Delta_n^{4 \beta - 1} + \frac{C_3 \Delta_{n}^{0 \land 4 \beta -\frac{3}{2}}}{n^2}.
\end{equation*}
\end{proof}


\subsubsection{Proof of Theorem \ref{th: vol adaptive}}
\begin{proof}
{For simplicity in notation we denote $\w{m}_\sigma= \w{m}$ in the proof.}
We analyse the quantity $\mathbb{E}[\left \| \w{\sigma}^2_{\w{m}} - \sigma^2 \right \|_n^2 ]$, acting again in different way depending on whether or not we are on $\Omega_n$. On $\Omega_n^c$ the proof can be led as before, getting
\begin{equation}
\mathbb{E}\left[\left \| \w{\sigma}^2_{\w{m}} - \sigma^2 \right \|_n^2 \one_{\Omega_n^c}\right] \le \frac{c \Delta_{n}^{0 \land 4 \beta - \frac{3}{2}}}{ n^2 }.
\label{eq: adapt risk omega n c}
\end{equation}
Now we investigate what happens on $\Omega_n$. By the definition of $\w{m}$ it is
$$\gamma_{n, M} (\w{\sigma}_{\w{m}}) + \pen (\w{m}) \le \gamma_{n, M} (\w{\sigma}_m) + \pen (m)\le \gamma_{n, M} ({\sigma}_m) + \pen (m)$$
and so, acting as before \eqref{eq: d replaced}, we get
\begin{eqnarray}\label{eq: on Omega, intermedio}
\mathbb{E}\left[\left \| {\sigma}^2_{\w{m}} - \sigma^2 \right \|_n^2 \one_{\Omega_n}\right] \le { 3}  \mathbb{E}[\left \| {\sigma}^2_{m} - \sigma^2 \right \|_n^2] + \frac{{ 48}}{n} \sum_{i = 0}^{n - 1} \mathbb{E}[\tilde{A}_{t_i}^2] + \frac{{ 48}}{n} \sum_{i = 0}^{n - 1} \E[(E_{t_i} \varphi_{\Delta_{n,i}^\beta}(\Delta_i X))^2]  \nonumber\\
 + { 24} \, \E\left[\sup_{t \in \mathcal{B}_{m, \w{m}}}\nu^2_{n} (t)\right] + { 12} \pen(m) - {12} \mathbb{E}[\pen(\w{m})],
 \end{eqnarray}
where $\nu_n$ has been defined in \eqref{eq: def nu} and 
$$\mathcal{B}_{m, m'} := \left \{ h \in S_m + S_{m'} : \left \| h \right \|_{\pi^X} \le 1 \right \}.$$
We want to control the term $\mathbb{E}[\sup_{t \in \mathcal{B}_{m, \w{m}}}(\nu_{n} (t))^2]$ and, to do that, we introduce the function $p(m, m')$ which is such that 
\begin{equation}
p(m, m') = \frac{1}{{ 24}}(\pen (m) + \pen (m')).
\label{eq: pmm}
\end{equation}
It is  
$$\mathbb{E}\left[\sup_{t \in \mathcal{B}_{m, \w{m}}}\nu_{n} (t)^2\right] \le  \mathbb{E}\left[ p(m, \w{m})\right] +  \sum_{m' \in \cM_n} \mathbb{E}\left[\left(\sup_{t \in \mathcal{B}_{m, m'}}(\nu_{n} (t))^2- p(m, m')\right)_+\right] .$$
In order to bound the second term in the right hand side here above we want to use Lemma 7 in \cite{Schmisser noisy}. We can remark that, for any $p \ge 2$, $\mathbb{E}[|B_{t_i}|^p] \le \frac{c}{\Delta_n^p} \mathbb{E}[Z_{t_i}^{2 p}] + c \sigma_1^{2 p}$. According to Proposition 4.2 in Barlow and Yor \cite{Barlow_Yor}  there exists a constant $c$ such that, for any $p> 0$, 
$$\E\left[Z_{t_i}
^{2p}\right] \le { c} \Delta_n^p \sigma_1^{2p}. $$
It follows
$$\E[|B_{t_i}|^p] \le { c} \sigma_1^{2p} .$$
By Lemma 7 in \cite{Schmisser noisy} there exists a constant $k$ such that, for any $m, m' \in \cM_n$, 
\begin{equation}
\E\left[\left(\sup_{t \in \mathcal{B}_{m, m'}}\nu^2_{n} (t)- k c \sigma_1 p(m, m')\right)_+\right] \le c \frac{e^{-(D_m + D_{m'})}}{n}.
\label{eq: resultat Schmisser adapt}
\end{equation}

We have said, in the definition of the penalization function $\pen_\sigma$ given in Subsection \ref{S: adaptive volatility}, that the constant $\kappa_1$ has to be calibrated. In particular, we need it to be such that $\frac{\kappa_1}{{ 24}} \geq  k c \sigma_1$, where $\sigma_1$ is the upper bound for the volatility provided in the second point of Assumption \ref{ass: X} and $k$ and $c$ are as in Lemma 7 of \cite{Schmisser noisy}.
We underline that Lemma 7 in \cite{Schmisser noisy} has been proved for a noisy diffusion. However, the same reasoning applies for a jump diffusion (see the proof of Theorem 13 in \cite{Schmisser}) and for our framework as well, as it is based on a projection argument and on algebraic computations which still hold true. \\
{We remark that Assumption (iii) of Section 2.2 of \cite{Schmisser noisy}, on the cardinality of the support of the basis, holds true only for the collections [DP] and [W]. However, Lemma 7 of \cite{Schmisser noisy} still holds true for the collection [T], up to add the condition $\frac{N_n^3}{n} \le 1$ as in the first point of Assumption 5. Indeed Lemma 8 of \cite{Schmisser noisy} (on which the proof of Lemma 7 relies) does not change considering the collection [T]. Then $\bar{r}_{m,m'}$, as introduced in the proof of Lemma 7 in \cite{Schmisser noisy}, is now bounded by an extra $D= \max(D_m, D_m')$, which implies an extra $D$ in the definition of both $\eta_0$ and $\eta_k$. In particular now we have, using the notation in Lemma 7 of \cite{Schmisser noisy}, $\eta_k := 2^{-k}(\sqrt{c_3 x_k} + c_4 D x_k)$. It follows, after having replaced $x_k$,
$$\eta^2 = (\sum_{k = 0}^{\infty} \eta_k)^2 \le c \gamma^2 (\frac{D}{n} + \frac{\tau}{n} + \frac{D^4}{n^2} + \frac{D^2 \tau^2}{n^2}).$$
Now, $\frac{D^4}{n^2} \le \frac{D}{n}$ as we have assumed $\frac{N_n^3}{n} \le 1$ in Assumption 5. Then, following again the proof of Lemma 7 in \cite{Schmisser noisy} but substituting the variable $\tau$ with $y$ such that $\tau = c \gamma^2 (\frac{y}{n} + D^2 \frac{y^2}{n^2})$, we get
\begin{align*}
E & = C \gamma^2 e^{- D}(\frac{1}{n} \int_0^{\infty} e^{-y} dy + \frac{2}{n^2} \int_0^\infty D^2 y e^{-y} dy ) \\
& \le c \frac{\gamma^2}{n} e^{-D} (1 + \frac{D^2}{n}).
\end{align*}
However, Assumption 5 implies $\frac{D^2}{n} \le 1$ and so we get that the extra part due to the choice of the collection [T] is negligible. We recover $E \le c \frac{\gamma^2}{n} e^{-D}$, as in Lemma 7 of \cite{Schmisser noisy} and as we wanted.} \\
From \eqref{eq: resultat Schmisser adapt} and the fourth point of Assumption \ref{ass: subspace} we get
$$ \sum_{m' \in \cM_n} \E\left[\left(\sup_{t \in \mathcal{B}_{m, m'}}\nu_{n}^2(t)- p(m, m')\right)_+\right] \le \frac{c}{n  } \sum_{m' \in \cM_n} e^{-(D_m + D_{m'})} \le \frac{c}{n  }.  $$
It provides us, using also \eqref{eq: risk on Omega n c} and Proposition \ref{lemma: size A B E}, 

\begin{eqnarray*}
\mathbb{E}\left[\left \| \w{\sigma}^2_{\w{m}} - \sigma^2 \right \|_n^2\right] &\le& { 3 \,} \mathbb{E}\left[\left \| {\sigma}^2_{m} - \sigma^2 \right \|_n^2\right] + c \Delta_n^{4 \beta - 1} + \frac{c }{n^4 } + c \pen (m)+ \frac{c \Delta_n^{0 \land (4 \beta - \frac{3}{2})}}{n^2} + \frac{c}{n  } \\
&\le & C_1 \inf_{m \in \cM_n} \left \{ \inf_{t \in \mathcal{S}_m} \| t - \sigma^2 \|_{\pi^X}^2 + \pen(m)  \right\} 
+ C_2 \Delta_n^{4 \beta - 1} + \frac{C_3 \Delta_n^{ 4 \beta -\frac{3}{2}}}{n^2 } + \frac{C_4}{n }.
\end{eqnarray*}

\end{proof}

\subsection{Proof of results on estimation of $g$}{\label{S: proof both}}
In this section we prove the results stated in Section \ref{S: volatility + jumps}.
\subsubsection{Proof of Proposition \ref{prop: estim both}}
\begin{proof}
The proof follows the same scheme than the proof of Proposition \ref{prop: volatility}.
We want to upper bound the empirical risk $\mathbb{E}[\left \|\w{g}_m - g \right \|^2_n]$.
By the definition of $T_{t_i}$ we have that 
$$\gamma_{n, M}(t):= \frac{1}{n} \sum_{i = 0}^{n - 1} (t(X_{t_i})- T_{t_i})^2{\modar \one_A(X_{t_i}) } = \frac{1}{n} \sum_{i = 0}^{n - 1} (t(X_{t_i})- g(X_{t_i}) - (A_{t_i} + B_{t_i} + C_{t_i} + E_{t_i} ))^2{\modar \one_A(X_{t_i}) }$$
\begin{eqnarray*}
\gamma_{n, M}(t)&=& \left \| t - g \right \|_n^2 + \frac{1}{n} \sum_{i = 0}^{n - 1} (A_{t_i} + B_{t_i} + C_{t_i} + E_{t_i})^2 {\modar \one_A(X_{t_i}) }\\
&&- \frac{2}{n} \sum_{i = 0}^{n - 1} (A_{t_i} + B_{t_i} + C_{t_i} + E_{t_i})(t(X_{t_i})- g(X_{t_i}))
{\modar \one_A(X_{t_i}). }
\end{eqnarray*}
As $\w{g}_m$ minimizes $\gamma_{n, M}(t)$, for any $g_m \in \mathcal{S}_m$ it is $\gamma_{n, M}(\w{g}_m) \le \gamma_{n, M}(g_m)$ and therefore
$$\left \| \w{g}_m - g \right \|_n^2 \le \left \| g_m - g \right \|_n^2 + \frac{2}{n} \sum_{i = 0}^{n - 1} (A_{t_i} + B_{t_i} + C_{t_i} + E_{t_i})(\w{g}_m(X_{t_i})- g_m(X_{t_i})).$$
Using Cauchy-Schwarz inequality and the fact that, for $d > 0$, $2 x y \le \frac{x^2}{d} + d y^2$, we get
\begin{eqnarray}
\left \| \w{g}_m - g \right \|_n^2 & \le &\left \| g_m - g \right \|_n^2 + \frac{2d}{n} \sum_{i = 0}^{n - 1} A_{t_i}^2 + \frac{1}{d} \left \| \w{g}_m - g_m \right \|_n^2 +  2 d \sup_{\mathcal{B}_m} \nu_{n,1}^2(t) \nonumber \\
&&+  \frac{1}{d} \left \| \w{g}_m - g_m \right \|_{\pi^X}^2 +2 d \sup_{\mathcal{B}_m} \nu_{n, 2}^2(t) ,
\label{eq: prima di definire omega n 2}
\end{eqnarray}
where $\mathcal{B}_m = \left \{ t \in S_m : \left \| t \right \|_{\pi^X}^2 \le 1\right \}$ and
\begin{equation}
\nu_{n,1} (t) := \frac{1}{n} \sum_{i = 0}^{n - 1} ( B_{t_i} + E_{t_i})t (X_{t_i})
, \quad \nu_{n,2} (t) := \frac{1}{n} \sum_{i = 0}^{n - 1} C_{t_i} t (X_{t_i}).
\label{eq: def nu 2}
\end{equation}
We still denote
$\Omega_n$ the space
on which the norms $\left \| \cdot \right \|_{\pi^X}$ and $\left \| \cdot \right \|_{n}$ are equivalent given by Equation \eqref{eq: omega}. We now act differently to bound the risk on $\Omega_n$ and $\Omega_n^c$. 

\paragraph{Bound of the risk on $\Omega_n$.}
On $\Omega_n$, it is
$$\left \| \w{g}_m - g_m \right \|_{\pi^X}^2 \le 2 \left \| \w{g}_m - g_m \right \|_{n}^2 \le 4 \left \| \w{g}_m - g \right \|_{n}^2 + 4 \left \| g - g_m \right \|_{n}^2,$$
where in the last estimation we have used triangular inequality. Replacing it in \eqref{eq: prima di definire omega n 2} we get
\begin{eqnarray*}
\left\| \w{g}_m - g \right\|_n^2 &\le &  \left\| g_m - g \right\|_n^2 + \frac{2 d}{n} \sum_{i = 0}^{n - 1} A_{t_i}^2  +  2d \sup_{\mathcal{B}_m} \nu_{n,1}^2(t) + 2d \sup_{\mathcal{B}_m} \nu_{n, 2}^2(t) \\
&&+ \frac{6}{d} \left \| \w{g}_m - g \right \|_{n}^2 + \frac{6}{d} \left \| g - g_m \right \|_{n}^2.
\end{eqnarray*}
{ As before, we take $d=12$. It yields}
\begin{equation}
\left \| \w{g}_m - g \right \|_n^2 \le { 3} \left \| g_m - g \right \|_n^2 + \frac{{ 48}}{n} \sum_{i = 0}^{n - 1} A_{t_i}^2 + { 48} \sup_{t \in \mathcal{B}_m} \nu_{n,1}^2(t) + { 48} \sup_{t \in \mathcal{B}_m} \nu_{n, 2}^2(t).
\label{eq: d replaced 2}
\end{equation}
{We now need introduce a orthonormal basis of $S_m$. Hence, we consider $(\tilde{\psi}_k)_k$, an orthonormal basis of $S_m$ for the $L^2_{\pi^X}$ norm, as before.}
 Each $t \in \mathcal{B}_m$ can be written 
$$t = \sum_{l = 1}^{D_m} \alpha_l \tilde{\psi}_l, \qquad \mbox{with } \sum_{l = 1}^{D_m} \alpha_l^2 \le 1.$$

Then, for $j = 1$ and $j=2$,
\begin{align}
&\sup_{t\in \mathcal{B}_m} \nu_{n,j}^2(t) = \sup_{\sum_{l = 1}^{D_m} \alpha_l^2  \le 1} \nu_{n,j}^2\left(\sum_{l = 1}^{D_m} \alpha_l  \tilde{\psi}_l\right) \\
& \le \sup_{\sum_{l = 1}^{D_m} \alpha_l^2  \le 1} \left(\sum_{l = 1}^{D_m} \alpha_l^2  \right)\left(\sum_{l = 1}^{D_m} \nu_{n,j}^2 (\tilde{\psi}_l) \right) = \sum_{l = 1}^{D_m} \nu_{n,j}^2 (\tilde{\psi}_l),
\label{eq: calcolo nu 2}
\end{align}
{where we have also used Cauchy-Schwartz inequality.}
To study the risk we need to evaluate the expected value. From \eqref{eq: d replaced 2}, \eqref{eq: calcolo nu 2} and using the first point of Proposition \ref{prop: size A B C E}, we get
\begin{equation}
\mathbb{E}\left[\left \| \w{g}_m - g \right \|_n^2 \one_{\Omega_n}\right] \le { 3} \mathbb{E}\left[ \left \| g_m - g \right \|_n^2 \right] + c \Delta_n^{1 - \tilde\varepsilon} + { 48} \sum_{l = 1}^{D_m} \mathbb{E}[\nu_{n,1}^2 (\tilde{\psi}_l)] + {48} \sum_{l = 1}^{D_m} \mathbb{E}\left[\nu_{n,2}^2 (\tilde{\psi}_l)\right].
\label{eq: risk Omega unfinished 2}
\end{equation}
By the definition \eqref{eq: def nu 2} of $\nu_{n,1}$ and the points 2 and 3 of Proposition \ref{prop: size A B C E}, it is
\begin{eqnarray*}
\sum_{l = 1}^{D_m} \mathbb{E}[\nu_{n,1}^2 (\tilde{\psi}_l)] &\le&  \frac{c}{n^2} \sum_{i = 0}^{n - 1} \sum_{l = 1}^{D_m} \mathbb{E}\left[\tilde{\psi}^2_l(X_{t_i}) \mathbb{E}[B^2_{t_i} + E^2_{t_i}|\mathcal{F}_{t_i}]\right]  \\
&\leq &
 \frac{c}{n^2} \sum_{i = 0}^{n - 1} \sum_{l = 1}^{D_m} \mathbb{E}\left[\tilde{\psi}^2_l(X_{t_i})(c \sigma_1^4 + \frac{c a_1^4}{\Delta_{n,i}} \sum_{j = 1}^M |\lambda^{(j)}_{t_i}|)\right] 
 \end{eqnarray*}
We observe that the first term in the right hand side here above is
$$\frac{c \sigma_1^4}{n^2} \sum_{i = 0}^{n - 1} \sum_{l = 1}^{D_m} \mathbb{E}[\tilde{\psi}^2_l(X_{t_i}] \le \frac{c D_m}{n}.$$
Regarding the second term, we remark that, { as $\left \| \tilde{\psi}_l \right \|_\infty \le  D_m$ and its norm $2$ is bounded by 1, it is
\begin{eqnarray*}
\mathbb{E}\left[\tilde{\psi}^2_l(X_{t_i})  \frac{c a_1^4}{\Delta_{n,i}} \sum_{j = 1}^M |\lambda^{(j)}_{t_i}|\right] &\le & \frac{c a_1^4}{\Delta_{n,i}} \sum_{j = 1}^M \mathbb{E}[ \tilde{\psi}^{2p}_l(X_{t_i})]^\frac{1}{p} \mathbb{E}[ |\lambda_{t_i}^{(j)}|^q]^\frac{1}{q} \\
&\le & \frac{c a_1^4}{\Delta_{n,i}} \sum_{j = 1}^M D_m^{2 \epsilon} \mathbb{E}[ \tilde{\psi}^{2}_l(X_{t_i})]^\frac{1}{2} \mathbb{E}[|\lambda^{(j)}_{t_i}|^{\frac{1 + \epsilon}{\epsilon}}]^{\frac{\epsilon}{1 + \epsilon}} \\
&\le & \frac{c a_1^4}{\Delta_{n,i}} D_m^{2 \epsilon},
\end{eqnarray*}
where we have used Holder inequality with $p = 1 + \epsilon$, for $\epsilon > 0$ arbitrarily small, and the boundedness of the moments of $\lambda$.} It follows
$$\frac{c}{n^2} \sum_{i = 0}^{n - 1} \sum_{l = 1}^{D_m} \mathbb{E}[\tilde{\psi}^2_l(X_{t_i}) \frac{c a_1^4}{\Delta_{n,i}} \sum_{j = 1}^M |\lambda^{(j)}_{t_i}|] \le \frac{c D_m^{1 + 2 \epsilon} a_1^4}{n \Delta_{n,i}}.$$
Hence, 
\begin{equation}
\sum_{l = 1}^{D_m} \mathbb{E}\left[\nu_{n,1}^2 (\tilde{\psi}_l)\right] \le \frac{c(\sigma_1^4 + a_1^4) D_m^{ 1 + 2 \epsilon}}{n \Delta_{n,i}}.
\label{eq: nu 1}
\end{equation}
In order to evaluate $\mathbb{E}[\nu_{n,2}^2 (\tilde{\psi}_l)]$, the following lemma will be useful:
\begin{lemma}
Suppose that A1-A3 hold true. Then, {for any $\varepsilon > 0$ arbitrarily small, }
$$\text{\rm Var} \left(\frac{1}{n} \sum_{i = 0}^{n - 1} C_{t_i} 
{ \tilde{\psi}_l(X_{t_i})} \right) \le \frac{c {D_m^{2 \varepsilon}}}{n \Delta_n}.$$
\label{lemma: variance Cti}
\end{lemma}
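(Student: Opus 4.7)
The plan is to set $U_i := C_{t_i}\tilde\psi_l(X_{t_i})$ and to split the variance as
$$\mathrm{Var}\!\left(\tfrac{1}{n}\sum_{i=0}^{n-1}U_i\right) \;=\; \tfrac{1}{n^2}\sum_{i=0}^{n-1}\mathrm{Var}(U_i)\;+\;\tfrac{2}{n^2}\!\!\sum_{0\le i<k\le n-1}\!\!\mathrm{Cov}(U_i,U_k),$$
and then treat diagonal and off-diagonal terms separately. The off-diagonal sum will be the one that generates the $1/\Delta_n$ factor via exponential $\beta$-mixing, while the loss $D_m^{2\varepsilon}$ will come from having to go slightly beyond $L^2$ in order to apply a covariance inequality.

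For the diagonal term, I would condition on $X_{t_i}$: since $\mathbb{E}[C_{t_i}\mid X_{t_i}]=0$, we have
$\mathbb{E}[U_i^2]=\mathbb{E}[\tilde\psi_l^2(X_{t_i})\,V(X_{t_i})]$ where $V(x):=a^4(x)\,\mathrm{Var}(\sum_j\lambda_{t_i}^{(j)}\mid X_{t_i}=x)$. Because $|a|\le a_1$ and $\lambda$ has bounded moments of every order by Proposition~\ref{prop: Lyapounov}, $V$ lies in every $L^q(\pi^X)$. Applying Hölder with conjugate exponents $p,q$ and using $\|\tilde\psi_l\|_\infty^2\le\phi_1 D_m$ together with $\mathbb{E}[\tilde\psi_l^2]\le \pi_1$, I obtain $\mathbb{E}[\tilde\psi_l^{2p}]^{1/p}\le c D_m^{1-1/p}$, and choosing $p=1+\varepsilon$ gives $\mathbb{E}[U_i^2]\le c\,D_m^{\varepsilon}$, hence $\frac{1}{n^2}\sum_i\mathrm{Var}(U_i)\le c D_m^{\varepsilon}/n$.

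For the off-diagonal terms I would invoke Davydov's covariance inequality: since $(X_t,\lambda_t)$ is exponentially $\beta$-mixing (Section~\ref{S: ergodicity}) and $U_i,U_k$ are measurable with respect to the corresponding $\sigma$-algebras, one has, for any $r>2$,
$$|\mathrm{Cov}(U_i,U_k)|\;\le\;c\,\beta_Z(t_k-t_i)^{1-2/r}\,\|U_i\|_r\,\|U_k\|_r.$$
The key calculation is to bound $\|U_i\|_r$ for $r$ just above $2$. Using Hölder with exponents $p,q$ satisfying $1/p+1/q=1$ one gets
$\|U_i\|_r^r\le \mathbb{E}[|C_{t_i}|^{rp}]^{1/p}\,\mathbb{E}[|\tilde\psi_l|^{rq}]^{1/q}$. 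Taking $q=1+\eta$ and $r=2+\delta$ with $\eta,\delta>0$ small, the $C$-factor stays bounded (all moments of $\lambda$ are finite), while $\mathbb{E}[|\tilde\psi_l|^{rq}]^{1/q}\le c\,D_m^{r/2-1/q}$, yielding $\|U_i\|_r\le c\,D_m^{(\delta+2\eta)/(4+O(\delta,\eta))}\le c\,D_m^{\varepsilon}$ for any prescribed $\varepsilon>0$. Hence
$|\mathrm{Cov}(U_i,U_k)|\le c\,D_m^{2\varepsilon}\,e^{-\gamma'(t_k-t_i)}$ for some $\gamma'>0$, and summing, using $t_k-t_i\ge c_1\Delta_{\min}(k-i)$ and the geometric series bound $\sum_{m\ge 1}e^{-\gamma' c_1 \Delta_{\min} m}\le c/\Delta_n$, gives
$\frac{2}{n^2}\sum_{i<k}|\mathrm{Cov}(U_i,U_k)|\le c\,D_m^{2\varepsilon}/(n\Delta_n)$. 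Combining both contributions and using $D_m^\varepsilon/n\le D_m^{2\varepsilon}/(n\Delta_n)$ for $\Delta_n\le 1$ yields the claimed bound.

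The main obstacle I anticipate is the trade-off in the Davydov bound: a naive application with $r=\infty$ or with crude $L^r$ estimates produces a loss of order $D_m$ rather than $D_m^{2\varepsilon}$. Getting down to $D_m^{2\varepsilon}$ requires taking $r$ as close to $2$ as possible while still retaining a positive power of $\beta_Z$; the exponential mixing rate is what allows us to afford $\beta^{1-2/r}$ with $r\downarrow 2$ and still sum to something of order $1/\Delta_n$. Once this careful tuning (together with the all-moments property of $\lambda$ coming from Proposition~\ref{prop: Lyapounov}) is in place, the rest is routine.
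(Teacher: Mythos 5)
Your proposal is correct and follows essentially the same route as the paper's proof: both expand the variance as a double sum of covariances, control each covariance via the $\alpha$-mixing/Davydov (Doukhan) inequality with exponents just above $2$, use the interpolation bound $\|\tilde\psi_l\|_{2+\delta}\le c\,D_m^{\varepsilon}$ (valid since $\|\tilde\psi_l\|_\infty\le c\sqrt{D_m}$ and $\|\tilde\psi_l\|_2\le 1$) to absorb the loss into $D_m^{2\varepsilon}$, and then sum the exponentially decaying mixing coefficients as a geometric series to produce the $1/\Delta_n$ factor. The paper does not separate the diagonal and off-diagonal contributions, and it handles the non-uniform mesh with a slightly more elaborate re-indexing of the observation times, but these are cosmetic differences with no impact on the argument's substance.
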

The proof of Lemma \ref{lemma: variance Cti} is in the appendix.
Lemma \ref{lemma: variance Cti} yields
\begin{equation}\label{eq:controlnu2Dm}
\sum_{l = 1}^{D_m} \mathbb{E}\left[\nu_{n,2}^2 (\tilde{\psi}_l)\right] \le \frac{c { D_m^{1 + 2 \varepsilon}}}{n \Delta_n}.
\end{equation}
Replacing the inequality here above and \eqref{eq: nu 1} in \eqref{eq: risk Omega unfinished 2} we get, using also that $\Delta_{n,i} \ge c \Delta_{min}$ and the fact that there exist $c_1$ and $c_2$ for which $c_1 \le \frac{\Delta_n}{\Delta_{min}} \le c_2 $,
$$\mathbb{E}\left[\left \| \w{g}_m - g \right \|_n^2 \one_{\Omega_n}\right] \le { 3} \mathbb{E}\left[ \left \| g_m - g \right \|_n^2 \right] + c \Delta_n^{1 - \tilde\varepsilon} + \frac{c (\sigma_1^4 + a_1^4 + 1) { D_m^{1 + 2 \varepsilon}}}{n \Delta_n}.$$
As the choice $g_m \in \mathcal{S}_m$ is arbitrary, we obtain
\begin{equation}
\mathbb{E}\left[\left \| \w{g}_m - g \right \|_n^2 \one_{\Omega_n}\right] \le { 3} \inf_{t \in \mathcal{S}_m} \left \| t - g \right \|_{\pi^X}^2 + c \Delta_n^{1 - \tilde\varepsilon} + \frac{c (\sigma_1^4 + a_1^4 + 1) {D_m^{1 + 2 \varepsilon}}}{n \Delta_n}.
\label{eq: risk on Omega n 2}
\end{equation}
\paragraph{Bound of the risk on $\Omega_n^c$.} 
Let us set $e = (e_{t_0},\ldots , e_{t_{n - 1}})$, where $e_{t_i} := T_{t_i} - g (X_{t_i}) = A_{t_i} + B_{t_i} + C_{t_i} + E_{t_i}$. Moreover 
$$\Pi_m T = \Pi_m (T_{t_0},\ldots , T_{t_{n - 1}}) = (\w{g}_m (X_{t_0}),\ldots , \w{g}_m (X_{t_{n - 1}})),$$
where $\Pi_m$ is the Euclidean orthogonal projection over $S_m$. Then, according to the projection definition, 
\begin{eqnarray*}
\left \| \w{g}_m - g \right \|_n^2 &=& \left \| \Pi_m T - g \right \|_n^2 = \left \| \Pi_m T - \Pi_m g \right \|_n^2 + \left \| \Pi_m g - g \right \|_n^2 \\
&\leq &\left \| T - g \right \|_n^2 + \left \| g \right \|_n^2 = \left \| e \right \|_n^2 + \left \| g \right \|_n^2.
\end{eqnarray*}
Therefore, from Cauchy -Schwarz inequality,
\begin{eqnarray*}\mathbb{E}[\left \| \w{g}_m - g \right \|_n^2 \one_{\Omega_n^c}] &\le& \mathbb{E}[\left \| e \right \|_n^2 \one_{\Omega_n^c}] + \mathbb{E}[\left \| g \right \|_n^2 \one_{\Omega_n^c}] = \frac{1}{n } \sum_{i = 0}^{n - 1} \mathbb{E}[e_{t_i}^2 \one_{\Omega_n^c}  ] + \frac{1}{n } \sum_{i = 0}^{n - 1} \mathbb{E}[g(X_{t_i})^2 \one_{\Omega_n^c} ]\\
&\leq &\frac{1}{n } \sum_{i = 0}^{n - 1} \mathbb{E}[e_{t_i}^4]^\frac{1}{2} \mathbb{P}(\Omega_n^c)^\frac{1}{2} + \frac{1}{n } \sum_{i = 0}^{n - 1} \mathbb{E}[g(X_{t_i})^4]^\frac{1}{2} \mathbb{P}(\Omega_n^c)^\frac{1}{2}
\end{eqnarray*}
Moreover, using the boundedness of both $a$ and $\sigma$ and the fact that $\mathbb{E}[|\lambda_{t_i}|^4] < \infty$, we obtain $\mathbb{E}[g(X_{t_i})^4] < \infty$. We are left to evaluate $\mathbb{E}[e_{t_i}^4]$. From Proposition \ref{prop: size A B C E} it follows
$$\mathbb{E}[e_{t_i}^4] \le \mathbb{E}[A_{t_i}^4 + B_{t_i}^4+ C_{t_i}^4 + E_{t_i}^4 ] \le c \Delta_n^{1 - \tilde\varepsilon} + c + c + \frac{c}{ \Delta_{n_i}^3} \le \frac{c}{ \Delta_{n}^3}.$$
Putting the pieces together it yields
\begin{equation}
\mathbb{E}[\left \| \w{g}_m - g \right \|_n^2 \one_{\Omega_n^c}] \le \frac{c}{ \Delta_{n}^\frac{3}{2}} \frac{1}{n^2} + \frac{c}{n^2} \le \frac{c}{ n^2 \Delta_{n}^\frac{3}{2}}.
\label{eq: risk on Omega n c 2}
\end{equation}
From \eqref{eq: risk on Omega n 2} and \eqref{eq: risk on Omega n c 2} it follows
\begin{equation*}
\mathbb{E}[\left \| \w{g}_m - g \right \|_n^2 ] \le {3 \,} \mathbb{E}[ \left \| g_m - g \right \|_n^2 ] +  \frac{C_1(\sigma_1^4 + a_1^4 + 1) {D_m^{1 + 2 \varepsilon}}}{n \Delta_n} +  C_2 \Delta_n^{1 - \tilde\varepsilon} + \frac{C_3}{ n^2 \Delta_{n}^\frac{3}{2}}.
\end{equation*}

\end{proof}

\subsubsection{Proof of Theorem \ref{th: estim both adaptive}}
\begin{proof}
{For simplicity in notation we denote $\w{m}_g= \w{m}$ in the proof.}\\
We act again in different way depending on whether or not we are on $\Omega_n$. On $\Omega_n^c$ the proof can be led as before, getting
\begin{equation}
\mathbb{E}\left[\left \| \w{g}_{\w{m}} - g \right \|_n^2 \one_{\Omega_n^c}\right] \le \frac{c}{ n^2 \Delta_{n}^\frac{3}{2}}.
\label{eq: adapt risk omega n c 2}
\end{equation}
Now we investigate what happens on $\Omega_n$. In particular, we analyse what happens on $\mathcal{O} \subset \Omega_n$, a set which will be defined later (see \eqref{eq: def O}). 
By the definition of $\w{m}$ we have
$$\gamma_{n, M} (\w{g}_{\w{m}}) + \pen (\w{m}) \le \gamma_{n, M} (\w{g}_m) + \pen (m)  \le \gamma_{n, M} ({g}_m) + \pen (m)$$
and so, acting as to obtain Equation \eqref{eq: d replaced 2}, we get
\begin{eqnarray*}
\mathbb{E}\left[\left \| \w{g}_{\w{m}} - g \right \|_n^2 \one_{\mathcal{O}}\right] &\le & { 3} \mathbb{E}[\left \| g_m - g \right \|_n^2] + \frac{{ 48}}{n} \sum_{i = 0}^{n - 1} \mathbb{E}[A_{t_i}^2] + { 48} \, \mathbb{E}\left[\sup_{t \in \mathcal{B}_{m, \w{m}}}\nu^2_{n} (t) \one_{\mathcal{O}}\right] \\
&&+ {12} \pen(m) - { 12} \mathbb{E}[\pen(\w{m})],
\end{eqnarray*}
where
$$\nu_{n} (t) := \frac{1}{n} \sum_{i = 0}^{n - 1} (B_{t_i} + C_{t_i} + E_{t_i}) t(X_{t_i}),$$ 
and
$$\mathcal{B}_{m, m'} := \left \{ h \in S_m + S_{m'} : \left \| h \right \|_{\pi^X} \le 1 \right \}.$$
In order to control the term $\mathbb{E}[\sup_{t \in \mathcal{B}_{m, \w{m}}}\nu^2_{n} (t) \one_{\mathcal{O}}]$, we introduce the function $p(m, m')$:
\begin{equation*}
p(m, m') \le \frac{1}{{ 48}}(\pen (m) + \pen (m')).
\end{equation*}
It is  
$$\mathbb{E}\Big[\sup_{t \in \mathcal{B}_{m, \w{m}}} \nu^2_{n} (t)\one_{\mathcal{O}}\Big] \le  \mathbb{E}[ p(m, \w{m})] +  \sum_{m' \in \cM_n} \mathbb{E}\left[\left(\sup_{t \in \mathcal{B}_{m, m'}}\nu_{n}^2(t)- p(m, m') \right)_+\one_{\mathcal{O}}\right] .$$
Replacing it in \eqref{eq: on Omega, intermedio} and using the first point of Proposition \ref{prop: size A B C E} we get
\begin{eqnarray}
\E\left[\left\| \w{g}_{\w{m}} - g \right\|_n^2 \one_{\mathcal{O}} \right] &\le & { 3} \mathbb{E}\left[\left \| {g}_{m} - g \right \|_n^2\right] + c \Delta_n^{1 - \tilde\varepsilon} +  { 48} \mathbb{E}[ p(m, \w{m})]+ { 12} \pen (m) \nonumber \\
&& - { 12} \E[\pen(\w{m})] 
 + { 48} \sum_{m' \in \cM_n} \mathbb{E}\left[\left(\sup_{t \in \mathcal{B}_{m, \w{m}}}\nu_{n}^2(t)- p(m, m')\right)_+\one_{\mathcal{O}}\right].
 \label{eq: da sostituire finale}
 \end{eqnarray}
We have introduced the function $p(m, m')$ with the purpose to use Talagrand inequality on the last term in the right hand side of the equation here above.
We recall the following version of the Talagrand inequality, which has been stated in \cite{Schmisser} and proved by Birg\'e and Massart (1998) \cite{BirMas98}
(corollary 2 p.354) and Comte and Merlev\`ede (2002) \cite{ComMer02} (p.222-223).
\begin{lemma}
Let $T_1,\ldots , T_{\modar p}$ be independent random variables with values in some Polish space $\mathcal{X}$ and $v_p : \mathcal{B}_{m,m'} \rightarrow \mathbb{R}$ such that
$$v_p (r) := \frac{1}{p} \sum_{j = 1}^p [r (T_j) - \mathbb{E}[r(T_j)]]. $$
Then,
\begin{equation}
\mathbb{E}\left[\left(\sup_{r \in \mathcal{B}_{m,m'}} |v_p(r)|^2 - 2H^2\right)_+\right] \le c \left(\frac{v}{p} e^{- c \frac{p H^2}{v}} + \frac{M^2}{p^2} e^{- c \frac{p H}{M}}\right),
\label{eq: Talagrand in Klein Rio}
\end{equation}
with $c$ a universal constant and where
$$\sup_{r \in \mathcal{B}_{m,m'}} \left \| r \right \|_\infty \le M, \quad \mathbb{E}[\sup_{r \in \mathcal{B}_{m,m'}}|v_p (r)|] \le H, \quad \sup_{r \in \mathcal{B}_{m,m'}} \frac{1}{p} \sum_{j = 1}^p \text{\rm Var}(r (T_j)) \le v.$$
\label{lemma: Talagrand in Klein Rio}
\end{lemma}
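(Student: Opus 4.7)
The plan is to deduce the stated moment bound from Talagrand's concentration inequality for suprema of empirical processes, of which this lemma is a classical consequence. Under the stated assumptions on $M$, $v$ and $H$, the Bousquet--Klein--Rio two-sided form of Talagrand's inequality, applied to both $r$ and $-r$, gives the basic tail bound
\[
\mathbb{P}\!\left(\sup_{r \in \mathcal{B}_{m,m'}} |v_p(r)| \ge H + x\right) \le \exp\!\left(-\frac{c_1 p x^2}{v}\right) + \exp\!\left(-\frac{c_2 p x}{M}\right)
\]
for every $x > 0$, after separating the subgaussian regime (where $Mx$ is dominated by $v$) from the subexponential regime (where $Mx$ dominates) in the classical Bernstein-type right-hand side. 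This tail bound is centered at the expected supremum $H$.

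Next, I would convert this tail bound into the desired truncated second-moment bound via the layer-cake formula
\[
\mathbb{E}\!\left[\left(\sup_r |v_p(r)|^2 - 2H^2\right)_+\right] = \int_0^\infty \mathbb{P}\!\left(\sup_r |v_p(r)|^2 > 2H^2 + t\right) dt.
\]
The key elementary estimate is that, for every $t \ge 0$, $\sqrt{2H^2 + t} - H \ge c_0 (H + \sqrt{t})$ for a universal $c_0 > 0$; this is seen by writing $\sqrt{2H^2+t} - H = (H^2 + t)/(\sqrt{2H^2+t} + H)$ and splitting into the regimes $t \le H^2$ and $t \ge H^2$. Consequently, the event $\{\sup_r |v_p(r)|^2 > 2H^2 + t\}$ is contained in $\{\sup_r |v_p(r)| \ge H + c_0(H + \sqrt{t})\}$, and applying the tail bound at $x = c_0 (H + \sqrt{t})$ makes the right-hand side factor as a baseline term $\exp(-\tilde c_1 p H^2 / v) + \exp(-\tilde c_2 p H / M)$ times a $t$-dependent factor $\exp(-\tilde c_1 p t / v) + \exp(-\tilde c_2 p \sqrt{t}/M)$.

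Integration in $t$ then produces $\int_0^\infty e^{-\tilde c_1 p t / v}\,dt = v/(\tilde c_1 p)$ and, after the substitution $u = \sqrt{t}$, $\int_0^\infty e^{-\tilde c_2 p \sqrt{t}/M}\,dt = 2\int_0^\infty u\, e^{-\tilde c_2 p u / M}\,du = O(M^2/p^2)$, which combined with the baseline factor gives exactly the claimed form. The main obstacle is invoking the correct version of Talagrand's inequality: one must center at the expected supremum $H$ (as opposed to the median), and one must verify that the mixed $MH$ cross-term that typically appears inside Bousquet's exponential can be absorbed so that the two regimes cleanly separate. Once that is arranged, the remainder of the argument is a straightforward integration over the tail.
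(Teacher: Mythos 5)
The paper does not prove this lemma: it is quoted from the literature, with explicit pointers to Birg\'e and Massart (1998), Corollary 2, p.~354, and to Comte and Merlev\`ede (2002), pp.~222--223, where the derivation (a Bernstein-type tail bound from Talagrand's inequality, followed by integration) is carried out, and to Schmisser (2012) where it is restated. Your sketch follows the same general route as those references --- center a tail bound for $\sup_r|v_p(r)|$ at $H$, then integrate using the layer-cake formula together with the elementary estimate $\sqrt{2H^2+t}-H\ge c_0(H+\sqrt t)$ --- and that part of the argument, including the two integrals $\int_0^\infty e^{-\tilde c_1 pt/v}\,dt=v/(\tilde c_1 p)$ and $\int_0^\infty e^{-\tilde c_2 p\sqrt t/M}\,dt=O(M^2/p^2)$, is correct and matches what those references do.

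The gap is in the tail bound you take as your starting point. The inequality
$\mathbb{P}\bigl(\sup_r|v_p(r)|\ge H+x\bigr)\le e^{-c_1 px^2/v}+e^{-c_2 px/M}$
is \emph{not} what Bousquet or Klein--Rio actually deliver: their variance proxy is $v+cMH$ rather than $v$, so the sub-Gaussian piece reads $e^{-c_1 px^2/(v+MH)}$ and does not split into your two terms. You flag this yourself (``the mixed $MH$ cross-term \dots must be absorbed'') but leave it unresolved, and it is precisely this step that carries the technical content. Carrying the correct proxy through the layer-cake integration produces a \emph{third} contribution of order $(MH/p)\,e^{-c\,pH/M}$, which must then be dominated by the second term using $u\,e^{-au}\le C_a$ with $u=pH/M$; this works but requires that the constant inside the exponential of the third term be strictly larger than the one you keep in the final bound, i.e.\ you must give away a constant. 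Relatedly, the references in fact establish the inequality with threshold $2(1+2\varepsilon)H^2$ and constants depending on $\varepsilon$, and one must then fix a universal $\varepsilon$ to recover the clean $2H^2$ stated here. As written, the proposal identifies the right obstruction but does not close it, so it is an outline rather than a proof.
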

We observe that in Talagrand lemma here above the random variables $T_1$,\ldots , $T_{\modar p}$ are supposed to be independent.
Starting from our variables we can get independent variables through Berbee's coupling method. We recall it below, it is proved by Viennet in Proposition 5.1 of \cite{Viennet} while an analogous statement in continuous time can be found in \cite{Chapitre 4}. 
\begin{lemma}
Let $(M_t)_{t \ge 0}$ be a stationary and exponentially $\beta$ mixing
process observed at discrete times $0 = t_0 \le t_1 \le\ldots \le t_n = T$. Let $p_n$ and $q_n$ be two integers such that $n = 2 p_n q_n$. For
any $j \in \left \{ 0, 1 \right \}$ and $1 \le k \le p_n$ we consider the random variables
$$U_{k,j} :=(M_{t_{(2(k - 1) + j) q_n + 1}},\ldots , M_{t_{(2 k - 1 + j) q_n}}) .$$
There exist random variables $M^*_{t_0},\ldots , M^*_{t_n}$ such that
$$U^*_{k,j} := (M^*_{t_{(2(k - 1) + j) q_n + 1}},\ldots , M^*_{t_{(2 k - 1 + j) q_n}})$$
satisfy the following properties.
\begin{itemize}
    \item For any $j \in \left \{ 0, 1 \right \}$, the random vectors $U^*_{1,j},\ldots , U^*_{p_n,j}$ are independent.
    \item For any $(j, k) \in \left \{ 0, 1 \right \} \times \left \{ 1,\ldots , p_n \right \}$, $U_{k,j}$ and $U^*_{k,j}$ have the same distribution.
    \item For any $(j, k) \in \left \{ 0, 1 \right \} \times \left \{ 1,\ldots , p_n \right \}$, $\mathbb{P}(U_{k,j} \neq U^*_{k,j} ) \le \beta_M(q_n \Delta_{min})$, where $\beta_M$ is the $\beta$-mixing coefficient of the process $(M_t)$. \end{itemize}
\label{lemma: berbee}
\end{lemma}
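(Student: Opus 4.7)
The plan is to reduce the statement to Berbee's classical two--variable coupling lemma and to iterate it within each parity class. Berbee's lemma asserts that if $X$ and $Y$ are random elements with values in Polish spaces, on a probability space carrying a $U(0,1)$ variable independent of $(X,Y)$, then there exists a random element $Y^*$ with the same law as $Y$, independent of $X$, and satisfying $\mathbb{P}(Y\ne Y^*)=\beta(\sigma(X),\sigma(Y))$. I would apply this successively to construct, for each fixed parity $j\in\{0,1\}$, a sequence of independent blocks $U^*_{k,j}$ sharing the required three properties; the two constructions will then be glued because the blocks associated to different parities are supported on disjoint sets of time indices, so the definition of the coupled process $(M^*_{t_i})$ at each $t_i$ does not conflict with itself.

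Fix $j\in\{0,1\}$. The block $U_{k,j}$ is a measurable function of $(M_t)$ on a time interval ending at $t_{(2k-1+j)q_n}$, while $U_{k+1,j}$ starts at index $(2k+j)q_n+1$, so the two index sets are separated by at least $q_n$ steps, i.e., by a time gap of at least $q_n\Delta_{\min}$. Stationarity of $(M_t)$ and the very definition of $\beta_M$ then yield
$$\beta\bigl(\sigma(U_{1,j},\ldots,U_{k,j}),\ \sigma(U_{k+1,j})\bigr)\ \le\ \beta_M(q_n\Delta_{\min}).$$
After enlarging the underlying probability space to carry a countable family of uniform variables independent of $(M_t)$, I would set $U^*_{1,j}:=U_{1,j}$ and iterate: given $U^*_{1,j},\ldots,U^*_{k,j}$ already constructed, mutually independent, each with the law of the corresponding $U_{\cdot,j}$ and coupled to the original blocks with mismatch probability at most $\beta_M(q_n\Delta_{\min})$, apply Berbee's lemma to $X=(U^*_{1,j},\ldots,U^*_{k,j})$ and $Y=U_{k+1,j}$ to produce $U^*_{k+1,j}$ enjoying the same three properties at level $k+1$. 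Carrying out the induction for $k=1,\ldots,p_n-1$ delivers the whole family $U^*_{1,j},\ldots,U^*_{p_n,j}$ for that parity.

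Finally, the coupled process $(M^*_{t_i})$ is defined by reading off, at each $t_i$, the appropriate coordinate from the block $U^*_{k,j}$ to which $t_i$ belongs (and by $M^*_{t_0}:=M_{t_0}$ for the remaining index), after ensuring that the auxiliary uniforms used for $j=0$ and for $j=1$ are taken mutually independent, so that the two parity constructions do not interact. All three listed properties then follow directly. I expect the main subtlety to lie in the careful invocation and iteration of Berbee's lemma itself, namely in verifying at each step that $U^*_{k+1,j}$ can be realized on the enlarged space jointly with the previously constructed variables, while simultaneously preserving the marginal law of $U_{k+1,j}$ and the $\beta$-mismatch bound; the detailed measure-theoretic bookkeeping is precisely what is carried out in Proposition~5.1 of \cite{Viennet}.
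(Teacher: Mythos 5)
Your sketch is, in substance, the argument behind the result the paper invokes: the paper does not prove this lemma itself but quotes it from Proposition 5.1 of \cite{Viennet}, and that proposition is proved precisely by the iterated application of Berbee's two-variable coupling within each parity class, followed by the gluing over the disjoint time indices, exactly as you describe. So there is no divergence of method; you are reconstructing the proof the paper outsources, and your accounting of the time gap (same-parity blocks separated by at least $q_n$ sampling steps, hence by $q_n\Delta_{min}$ in time) and of the role of stationarity is correct.

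One step in your iteration needs tightening. You state the mixing bound for the original blocks, $\beta\bigl(\sigma(U_{1,j},\ldots,U_{k,j}),\sigma(U_{k+1,j})\bigr)\le\beta_M(q_n\Delta_{min})$, but you then apply Berbee's lemma with $X=(U^*_{1,j},\ldots,U^*_{k,j})$; the mismatch probability that application delivers is $\beta\bigl(\sigma(U^*_{1,j},\ldots,U^*_{k,j}),\sigma(U_{k+1,j})\bigr)$, which is not the quantity you bounded. The gap is easily closed: by construction each $U^*_{m,j}$ is a measurable function of $(U_{1,j},\ldots,U_{m,j})$ and of the auxiliary uniforms used up to step $m$, so $\sigma(U^*_{1,j},\ldots,U^*_{k,j})\subset\sigma(U_{1,j},\ldots,U_{k,j})\vee\sigma(\delta_1,\ldots,\delta_k)$, and since the uniforms are independent of the whole original process, augmenting by them does not increase the $\beta$-coefficient; monotonicity in the first argument then gives the required bound $\beta_M(q_n\Delta_{min})$. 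Equivalently (and closer to Viennet's bookkeeping) one applies Berbee at step $k+1$ conditioning on the original past blocks together with the previously used uniforms; independence of the starred family then follows because $(U^*_{1,j},\ldots,U^*_{k,j})$ is measurable with respect to that conditioning $\sigma$-field. Finally, note that no independence between the $j=0$ and $j=1$ constructions is claimed or needed, so your requirement that the two families of uniforms be mutually independent is harmless but superfluous.
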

	We want to apply Berbee's coupling lemma to the random vectors $M_{t_i}=(B_{t_i}, C_{t_i}, E_{t_i}, X_{t_i})$, that we write as a function of $(X_t, \lambda_t)$, which is stationary and exponentially $\beta$- mixing, as discussed in Section \ref{S: ergodicity}. We define the $\sigma$ algebra
	\begin{equation}
		\tilde{\mathcal{F}}_{t_i} := \sigma(X_s, \lambda_s, s\in (t_i, t_{i + 1}]),
		\label{eq: sigma algebra}
	\end{equation}
	completed with the null sets. Because of the exponentially $\beta$-mixing of $(X_t, \lambda_t)$ we know it is
	$$\beta(\tilde{\mathcal{F}}_{t_i}, \tilde{\mathcal{F}}_{t_j}) \le c e^{- \gamma |t_j - t_i|}.$$
	Writing the dynamic of $\lambda=(\lambda^{(1)},\dots,\lambda^{(d)})$ in the matrix form, $d\lambda_t=-\alpha(\lambda_t-\zeta)dt+c dN_t$, and since $c$ is invertible, we can get $dN_t$ as a
	 function of $d \lambda_t$ and $\lambda_t$. Then, using the invertibility of $\sigma$ with the second line of \eqref{eq: model}, we can write $dW_t$ as a function of $dX_t,X_t,d\lambda_t,$ and $\lambda_t$.
	Now, by the definition of $B_{t_i}$, $ C_{t_i}$ and $E_{t_i}$ it follows that $(B_{t_i} , C_{t_i} , E_{t_i})$ is measurable with respect to $\tilde{\mathcal{F}}_{t_i}$. We can therefore use Berbee's coupling  on $M_{t_i}=(B_{t_i} , C_{t_i} , E_{t_i}, X_{t_i})$. Let $q_n$ be the size of the blocks, which we will specify later. As it may happen that $2q_n$ does not divide $n$, we set 
	 $p_n=\lfloor n/(2q_n) \rfloor $ and remove from the definition of the contrast function \eqref{E:contrast pour g} the data corresponding to the indexes $i \in \{2p_n q_n,\dots,n-1\}$.
	This modification avoids dealing with a last block having a different size, and we can apply Berbee's lemma to  $(M_{t_i})_{i=0,\dots,2p_nq_n}$.	
	 It yields to the construction of variables $(U_{k,j}^*)_{ k\in\{1,\dots ,p_n\} ; j=0,1}$ such that $U_{k,j}^*$ has the same law as $U_{k,j}=(B_{t_{(2(k - 1) + j) q_n + l}},
	C_{t_{(2(k - 1) + j) q_n + l}},
	E_{t_{(2(k - 1) + j) q_n + l}},
	X_{t_{(2(k - 1) + j) q_n + l}})_{l \in \{1,\dots,q_n\}} $, and for $j\in\{0,1\}$, the random variables 
	$(U_{k,j}^*)_{1\le k  \le p_n} $ are independent.
	Let us set
	$$\Omega^*:= \left \{ \omega, \forall j, \forall k, U_{k,j} = U_{k,j}^*  \right \},$$
	by Berbee's coupling lemma it comes that 
	$$\mathbb{P}(\Omega^{*,c}) \le 2 p_n \beta_Z (q_n \Delta_{min}) \le c \frac{n}{q_n} e^{- \gamma q_n \Delta_{min}}.$$
It is enough to take $q_n := \lfloor \frac{5}{\gamma \Delta_{min}} \log n \rfloor $ in \eqref{eq: P berbee} to get 
	\begin{equation}
		\mathbb{P}(\Omega^{*,c}) \le \frac{c}{n^4 \log n}.
		\label{eq: P berbee}
	\end{equation}
	
	For $t \in \mathcal{B}_{m, m'}$, and $(j,k) \in \{0,1\}\times \{1,\dots,p_n\}$ we define both
	\begin{equation} \label{E:def_t_Ukl}
		t(U_{k,j}^*)=\frac{1}{q_n} \sum_{l = 1}^{q_n}(B^*_{t_{(2(k - 1) + j) q_n + l}} + C^*_{t_{(2(k - 1) + j) q_n + l}} + E^*_{ t_{(2(k - 1) + j) q_n + l}}) t(X^*_{t_{(2(k - 1 )+ j) q_n + l}}),
	\end{equation}
	and $t(U_{k, j})$ the analogous quantity based on $U_{k,j}$. 
	
	We want to apply Talagrand inequality on $v^*_n(t) := v_{p_n}^{0, *}(t) + v_{p_n}^{1, *}(t)$, where 
	\begin{equation}
	v_p^{0, *}(t) = \frac{1}{p} \sum_{k = 1}^{p} t(U^*_{k, 0}), \qquad v_p^{1, *}(t) = \frac{1}{p} \sum_{k = 1}^{p} t(U^*_{k, 1}).
	   \label{eq: def vj}
	\end{equation}
	With these definitions, we have on the set $\Omega^*$, $v^*_n(t) =\nu_n(t)$ for all $t \in \mathcal{B}_{m,m'}$. 
	Now we want to compute the constants $M$, $v$ and $H$ as defined in Lemma \ref{lemma: Talagrand in Klein Rio}. The random variables $	t(U_{k,j}^*)$  are not bounded, hence, to compute $M$, we introduce the following set
	\begin{equation}
		\Omega_B:= \left \{ \omega : \forall j, \forall k, \forall t \in  \mathcal{B}_{m, m'}, \, |t(U^*_{k,j})| \le \tilde{c} n^{\varepsilon_0} D^\frac{1}{2} \right \},
		\label{eq: Omega B def}
	\end{equation}
	with $D := D_m + D_{m'}$ and some $\varepsilon_0 > 0$. 
	The following lemma is proven in the appendix.
	\begin{lemma}
		Suppose that A1-A3 hold. Then there exists $c > 0$ such that
		$$\mathbb{P}(\Omega_B^c) \le \frac{c}{n^4}.$$
		\label{lemma: P omega B complementare}
	\end{lemma}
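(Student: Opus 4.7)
The plan is to reduce the uniform control on $\mathcal{B}_{m,m'}$ to a finite union over an $L^2_{\pi^X}$-orthonormal basis of $S_m+S_{m'}$ and then to bound each scalar tail by Markov's inequality applied at a large polynomial order. First, I would let $(\tilde\psi_l)_{l=1,\dots,D}$ be such a basis, with $D=D_m+D_{m'}$. Any $t\in\mathcal{B}_{m,m'}$ writes $t=\sum_{l}\alpha_l\tilde\psi_l$ with $\sum_{l}\alpha_l^2\le 1$, so Cauchy--Schwarz yields $|t(U^*_{k,j})|^2\le D\max_{l}|\nu^*_{n,k,j}(\tilde\psi_l)|^2$, where $\nu^*_{n,k,j}(\tilde\psi_l)$ denotes the quantity in \eqref{E:def_t_Ukl} specialized to $t=\tilde\psi_l$. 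Since $U^*_{k,j}$ and $U_{k,j}$ share their marginal law, a union bound over the at most $2 p_n D$ indices $(j,k,l)$ produces
$$
\mathbb{P}(\Omega_B^c)\le 2 p_n D\,\max_{l}\mathbb{P}\bigl(|\nu_{n,1,0}(\tilde\psi_l)|>\tilde c\,n^{\varepsilon_0}\bigr).
$$

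To bound each scalar tail, I would apply Markov at an even integer order $p\ge 2$ and then Jensen together with the stationarity of $(X,\lambda)$:
$$
\mathbb{P}\bigl(|\nu_{n,1,0}(\tilde\psi_l)|>\tilde c\,n^{\varepsilon_0}\bigr)\le (\tilde c\,n^{\varepsilon_0})^{-p}\,\mathbb{E}\bigl[|(B_{t_1}+C_{t_1}+E_{t_1})\tilde\psi_l(X_{t_1})|^p\bigr].
$$
By Assumption \ref{ass: subspace}, applied in the enveloping space $\tilde{\mathcal S}_n$, $\|\tilde\psi_l\|_\infty\le c\,D^{1/2}$. Burkholder--Davis--Gundy for $Z_{t_1}$ together with the boundedness of $\sigma$ yields $\mathbb{E}[|B_{t_1}|^p]\le c_p$, and Proposition \ref{prop: Lyapounov}, through the bounded moments of $\lambda$ of every order, gives $\mathbb{E}[|C_{t_1}|^p]\le c_p$. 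The main obstacle lies in $E_{t_1}$: writing $|J_{t_1}|\lesssim a_1\,K_1+a_1\sum_j\lambda^{(j)}_{t_1}\Delta_{n,1}$, where $K_1$ counts the Hawkes jumps on $(t_1,t_2]$, and using $\mathbb{E}[K_1^q]\le c\,\Delta_{n,1}$ for every $q\ge 1$ (a consequence of the bounded moments of $\lambda$), a direct computation on \eqref{eq: Eti} extends the $p=4$ bound of Proposition \ref{prop: size A B C E} to $\mathbb{E}[|E_{t_1}|^p]\le c\,\Delta_n^{1-p}$. Collecting,
$$
\mathbb{E}\bigl[|(B_{t_1}+C_{t_1}+E_{t_1})\tilde\psi_l(X_{t_1})|^p\bigr]\le c\,D^{p/2}\,\Delta_n^{1-p}.
$$

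Plugging back, and using $p_n\le n$ together with $D\le 2N_n=O(n\Delta_n)$ from Assumption 5, one obtains
$$
\mathbb{P}(\Omega_B^c)\le c\,n^{2}\,\Delta_n\left(\frac{D^{1/2}}{\Delta_n\,n^{\varepsilon_0}}\right)^{\!p}.
$$
Since $D^{1/2}\le c(n\Delta_n)^{1/2}$, the ratio inside the parenthesis is at most $(n/\Delta_n)^{1/2}\,n^{-\varepsilon_0}\le n^{1-\epsilon-\varepsilon_0}$ by Assumption \ref{ass: step} (up to logarithmic factors). Picking $\varepsilon_0$ strictly larger than $1-\epsilon$ and then $p$ large enough drives the right-hand side below $c/n^4$, which is the claim. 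The crucial difficulty is precisely the $\Delta_n^{1-p}$ divergence of the high moments of $E_{t_1}$, which is what forces the extra factor $n^{\varepsilon_0}$ in the definition of $\Omega_B$ and motivates the strengthened step condition of Assumption \ref{ass: step}.
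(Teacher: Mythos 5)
Your reduction via Cauchy--Schwarz and a union bound over the basis $(\tilde\psi_l)$ is workable (it is a minor variant of the paper's, which instead uses $\|t\|_\infty \le cD^{1/2}$ directly and does not union over $l$), and your moment bounds $\E[|B_{t_1}|^p]\le c_p$, $\E[|C_{t_1}|^p]\le c_p$ and $\E[|E_{t_1}|^p]\lesssim \Delta_n^{1-p}$ are essentially correct. The problem lies in the final step. With $\E[|E_{t_1}|^p]\lesssim \Delta_n^{1-p}$, your Markov bound gives
$$
\mathbb{P}(\Omega_B^c)\lesssim n^2\Delta_n^2\left(\frac{(n/\Delta_n)^{1/2}}{n^{\varepsilon_0}}\right)^{p},
$$
and Assumption 6 only controls $(n/\Delta_n)^{1/2}\lesssim n^{1-\epsilon}/\log n$. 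You then conclude by taking $\varepsilon_0>1-\epsilon$ to make the bracket go to zero. But the exponent $\varepsilon_0$ in the definition of $\Omega_B$ is \emph{not free at your disposal}: the same $\varepsilon_0$ enters the Talagrand bound in the proof of Theorem \ref{th: estim both adaptive} through $M=\tilde c n^{\varepsilon_0}D^{1/2}$, and there one needs $n^{\varepsilon_0}\log n=o(\sqrt{n\Delta_n})$, i.e.\ $\varepsilon_0\le\epsilon$. Since $\sqrt{n\Delta_n}\le\sqrt n$, Assumption 6 forces $\epsilon<1/2$, hence $1-\epsilon>\epsilon$, and the two requirements $\varepsilon_0>1-\epsilon$ and $\varepsilon_0\le\epsilon$ are irreconcilable. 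So your proof, taken as written, would break the adaptive-procedure argument for which $\Omega_B$ was introduced.

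The deeper reason the moment approach is insufficient is that the scaling $\E[|E_{t_1}|^p]\sim\Delta_n^{1-p}$ is tight: $J_{t_1}^2/\Delta_n$ equals $0$ with probability $1-O(\Delta_n)$ and is of order $1/\Delta_n$ with probability $O(\Delta_n)$, so no finite $p$ captures the actual tail of $\frac1{q_n}\sum_k |E_{t_k}|$. That tail decays much faster than polynomially because, once the jump sizes are bounded by $a_1$, a large value of $\sum_k J_{t_k}^2$ requires a large \emph{number} of jumps, which is Poisson-concentrated. The paper's proof exploits exactly this: it splits $E$ into $I_1,I_2,I_3$, handles the $Z$-contributions by the sub-Gaussian bound \eqref{eq: study Z}, and for $\frac1{\Delta_n}\sum_k J_{t_k}^2$ introduces the event $A=\{\exists\tilde k: J_{t_{\tilde k}}^2\ge n^{\varepsilon_0/2}\}$. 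On $A$ a single big jump occurs (controlled by $\mathbb{P}(A)\le q_n\Delta_n/n^{r\varepsilon_0/2}$), while on $A^c$ the complement forces at least $cn^{\varepsilon_0/2}\log n$ jumps on $[0,t_{q_n}]$, whose probability is bounded by high moments of the jump counter $\Delta N_q$, a quantity of order $\log n$. This case-split delivers a bound of order $n^{-r\varepsilon_0/2}$ for \emph{every} $r$ and \emph{every} $\varepsilon_0>0$, and hence works for $\varepsilon_0$ arbitrarily small. To repair your proof you would need to replace the uniform moment bound on $E_{t_1}$ by some version of this combinatorial/Poisson argument on the number of jumps.
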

	We introduce bounded version of the random variables $t(U_{k,j}^*)$ by setting for $M>0$,
	\begin{equation*}
		t^{(M)}(U_{k,j}^*)=t(U_{k,j}^*)\vee -M \wedge M.
	\end{equation*}
	With the choice $M:= \tilde{c} n^{\varepsilon_0} D^\frac{1}{2}$, we have on the event $\Omega_B$ that
	$t^{(M)}(U_{k,j}^*)=	t(U_{k,j}^*)$ , $\forall j, \forall k, , \forall t \in  \mathcal{B}_{m, m'}$.
	We set
	\begin{equation}
		\mathcal{O}:= \Omega_n \cap \Omega_B \cap \Omega^*.
		\label{eq: def O}
	\end{equation}
	From \eqref{eq: proba omega n c}, \eqref{eq: P berbee} and Lemma \ref{lemma: P omega B complementare} it follows
	$$\mathbb{P}(\mathcal{O}^c) \le \frac{c}{n^4}.$$
	
	We act on $\mathcal{O}^c$ as we did on $\Omega_n^c$, getting
	\begin{equation}
		\mathbb{E}[\left \| \w{g}_{\w{m}} - g \right \|_n^2 \one_{\mathcal{O}^c}] \le \frac{c}{n^2 \Delta_{n}^\frac{3}{2}}.
		\label{eq: risk O c both}
	\end{equation}

	On the other side, on $\mathcal{O}$ we are really going to use Talagrand's inequality to control 
	\begin{equation}
		\sum_{m' \in \cM_n} \mathbb{E}\left[\left(\sup_{t \in \mathcal{B}_{m, m'}}\nu_{n} (t)^2- p(m, m')\right)_+\one_{\mathcal{O}}\right].  
		\label{E:a controler par Talagrand}
	\end{equation}
	On $\mathcal{O}$, we have $\nu_n(t) = v_{p_n}^{0, *}(t) + v_{p_n}^{1, *}(t)$ and
	\begin{align*}
		v^{j,*}_{p} (t)&=\frac{1}{p}\sum_{k=1}^p t^{(M)}(U^*_{k,j})=
		\frac{1}{p}\sum_{k=1}^p \left(  t^{(M)}(U^*_{k,j}) -\E[t^{(M)}(U^*_{k,j})] \right) + E[t^{(M)}(U^*_{0,0})]
		\\ 
		&= v^{j,*}_{p} (t^{(M)}) + \E[t^{(M)}(U^*_{0,0})],
	\end{align*} 
	{ where $v^{j,*}_{p}$ has been defined in \eqref{eq: def vj}.}

Using $\E[t(U^*_{0,0})]=0$, we deduce 
$$ |\E[t^{(M)}(U^*_{0,0})]| \le \E\left[ |(t-t^{(M)})(U^*_{0,0})| \one_{\Omega_B^c}\right] \le \E\left[ t(U^*_{0,0})^2\right]^{1/2} \P( \Omega_B^c)^{1/2} .$$

We need the following Lemma whose proof is postponed to the Appendix.

\begin{lemma}
		We have $\sup_{t \in \mathcal{B}_{m,m'}}\E\left[ t(U^*_{0,0})^2\right] \le c \frac{D^\delta}{q_n \Delta_n}$, for $\delta$ arbitrarily small and some constant $c$. 
		\label{lemma: variance U00}
	\end{lemma}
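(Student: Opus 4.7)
The plan is to exploit the fact that $U^*_{0,0}$ has the same distribution as $U_{0,0}$ and then bound directly
$$\E[t(U_{0,0})^2] = \frac{1}{q_n^2}\sum_{l,l'=1}^{q_n} \E[Y_l Y_{l'}], \qquad Y_l := (B_{t_l} + C_{t_l} + E_{t_l})\,t(X_{t_l}),$$
by splitting into diagonal and off-diagonal contributions. Throughout I would use Assumption \ref{ass: subspace}(i) together with \eqref{eq:controlpi}, which gives $\|t\|_\infty^2 \le \phi_1 D\|t\|^2 \le cD$ whenever $\|t\|_{\pi^X}\le 1$, so one can freely trade polynomial factors of $D$ for bounded moments via H\"older's inequality.

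For the diagonal terms, I would expand $(B+C+E)^2 \le 3(B^2+C^2+E^2)$ and invoke Proposition \ref{prop: size A B C E}. The $B$-contribution is bounded by a constant via $\E[B_{t_l}^2\mid \F_{t_l}] \le c\sigma_1^4$, the $C$-contribution by $\E[C_{t_l}^2 t^2(X_{t_l})] \le \E[C_{t_l}^{2p}]^{1/p}\E[t^{2q}(X_{t_l})]^{1/q} \le cD^{\delta}$ with $q=1/(1-\delta)$ close to $1$, and the dominant $E$-contribution by
$$\E[E_{t_l}^2 t^2(X_{t_l})] \le \frac{ca_1^4}{\Delta_n}\E\bigl[t^2(X_{t_l})\textstyle\sum_j \lambda_{t_l}^{(j)}\bigr] \le \frac{cD^{\delta}}{\Delta_n},$$
where I again use H\"older with $\|t\|_\infty^{2\delta'}$ and the bounded moments of $\lambda$ coming from Proposition \ref{prop: Lyapounov}. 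Summing over the $q_n$ diagonal terms and dividing by $q_n^2$ produces the target bound $cD^{\delta}/(q_n\Delta_n)$.

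For the off-diagonal terms with $l<l'$, I would first reduce the nine cross-products. Since $(B_{t_l},C_{t_l},E_{t_l},X_{t_l},X_{t_{l'}})$ are all $\F_{t_{l'}}$-measurable and $\E[B_{t_{l'}}|\F_{t_{l'}}] = \E[E_{t_{l'}}|\F_{t_{l'}}] = 0$, the tower property collapses the expression to
$$\E[Y_l Y_{l'}] = \E\bigl[(B_{t_l}+C_{t_l}+E_{t_l})\,t(X_{t_l})\cdot C_{t_{l'}}t(X_{t_{l'}})\bigr],$$
and both factors are then globally centered (using $\E[C|X]=0$ and the conditional centering of $B,E$ given $\F_{t_l}$). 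At this point I would apply Davydov's covariance inequality to the two blocks $\tilde\F_{t_l}$ and $\tilde\F_{t_{l'}}$ with H\"older exponents $p=q=2+\varepsilon$ and $1/r = \varepsilon/(2+\varepsilon)$, using the $L^{2+\varepsilon}$-norm estimates for $B, C, E$ derived by interpolation from Proposition \ref{prop: size A B C E} together with the exponential decay $\beta_Z(s)\le Ke^{-\gamma s}$. The geometric series $\sum_{k\ge 1}\beta_Z(k\Delta_{min})^{\varepsilon/(2+\varepsilon)}$ is of order $c(\varepsilon)/\Delta_n$, and combining this with the moment bounds yields a contribution of the right order $cD^{\delta}/(q_n\Delta_n)$ after dividing by $q_n^2$.

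The main obstacle will be the careful tracking of $\Delta_n$-powers in the $L^{2+\varepsilon}$-norm of $E_{t_l}\,t(X_{t_l})$, since $\|E_{t_l}\|_2 \sim \Delta_n^{-1/2}$ and $\|E_{t_l}\|_4 \sim \Delta_n^{-3/4}$ and the interpolation between these produces a $\Delta_n$-power that must not exceed $1/2$, while simultaneously the H\"older exponent controlling $\|t\|_\infty$ must keep the $D$-factor at level $D^{\delta}$ only. A delicate balance between the Davydov exponent $1/r$ and the moment-interpolation exponent $\theta=2\varepsilon/(2+\varepsilon)$ is required: both must tend to zero so as to yield an arbitrarily small power $D^{\delta}$, yet their product with the $\beta$-mixing decay rate and with $\|E_{t_l}\|_{2+\varepsilon}$ must still recover the factor $1/\Delta_n$. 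This mirrors the exponent-tracking strategy already carried out in Lemma \ref{lemma: variance Cti}, adapted to accommodate the additional Brownian and jump contributions.
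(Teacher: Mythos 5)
Your diagonal estimate is sound and matches the paper's $V_1$. The genuine gap is in the off-diagonal estimate. After your tower-property reduction, the cross-term with $l<l'$ is $\E\bigl[(B_{t_l}+C_{t_l}+E_{t_l})\,t(X_{t_l})\cdot C_{t_{l'}}t(X_{t_{l'}})\bigr]$, and you propose to control it by Davydov's inequality. But the first factor contains $E_{t_l}$, whose $L^p$ norm is at least $\|E_{t_l}\|_2 \sim \Delta_n^{-1/2}$ by Proposition \ref{prop: size A B C E}; no matter how you tune the Davydov exponent $1/r$ and the interpolation exponent $\theta$, the $L^{2+\varepsilon}$ norm you must pay is $\gtrsim \Delta_n^{-1/2}$, because interpolation can only move you \emph{above} the $L^2$ value. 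Summing the geometrically decaying mixing coefficients over $l'$ costs a further factor $\sim 1/\Delta_{min}\sim 1/\Delta_n$, so the off-diagonal contribution comes out of order $D^{2\delta}/(q_n\Delta_n^{3/2})$ — an extra $\Delta_n^{-1/2}$ over the target $D^\delta/(q_n\Delta_n)$. The ``delicate balance'' you anticipate in the last paragraph cannot be achieved.

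The paper sidesteps this by splitting \emph{before} expanding the square: writing $t(U^*_{0,0}) = S_1 + S_2$ with $S_1 = q_n^{-1}\sum_l (B^*_{t_l}+E^*_{t_l})\,t(X^*_{t_l})$ and $S_2 = q_n^{-1}\sum_l C^*_{t_l}\,t(X^*_{t_l})$, and bounding $\mathrm{Var}(S_1+S_2)\le 2\,\mathrm{Var}(S_1)+2\,\mathrm{Var}(S_2)$. For $S_1$ the summands are martingale differences with respect to $(\mathcal{F}_{t_l})$, so \emph{all} of its cross-terms vanish identically and only the diagonal (your $V_1$-type bound, giving $cD^\delta/(q_n\Delta_n)$) survives; the large $\|E\|_p$ thus never meets the mixing geometric series. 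For $S_2$ the summands have bounded, $\Delta_n$-free moments, so the Davydov/mixing argument of Lemma \ref{lemma: variance Cti} can absorb the $1/\Delta_n$ from the geometric sum and still land on $cD^{2\delta}/(q_n\Delta_n)$. In short, you must exploit the martingale-difference orthogonality of the $B+E$ part \emph{before} ever invoking a covariance inequality; mixing alone is too weak to handle the jump part $E$.
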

	Using Lemma \ref{lemma: P omega B complementare} and Lemma \ref{lemma: variance U00}, we deduce $
	\sup_{t \in \mathcal{B}_{m,m'}} | E[t^{(M)}(U^*_{0,0})] | \le c\frac{D^{\delta/2}}{(q_n \Delta_n)^{1/2}n^2} 
	\le c \frac{D^{\delta/2}}{(\ln(n))^{1/2}n^2} \le c \frac{p(m,m')}{n}$. 
	Hence to control the term \eqref{E:a controler par Talagrand} it is sufficient to get an upper bound, for $n$ large enough on 
	$$\sum_{m' \in \cM_n} \mathbb{E}\left[\left(\sup_{t \in \mathcal{B}_{m, m'}}v^{j,*}_{p} (t^{(M)})^2- \frac{1}{4} p(m, m')\right)_+ \right],$$
	for $j=0,1$. 
	We can apply Lemma \ref{lemma: Talagrand in Klein Rio} to this term. {To this purpose, we need to compute the constants $M$, $v$ and $H$ appearing therein.} By construction, we can use $ M= c n^{ \varepsilon_0} D^\frac{1}{2}$, and by Lemma \ref{lemma: variance U00} and stationarity we can take $v=c\frac{D^\delta}{q_n \Delta_n}$.
	In order to compute $H^2$ we observe it is 
		\begin{multline*}
			\mathbb{E}[\sup_{t \in \mathcal{B}_{m,m'}}|v_p^{j,*} (t^{(M)})|]
			\le  	\mathbb{E}[
			\sup_{t \in \mathcal{B}_{m,m'}}|v_p^{j,*} (t^{(M)})|\one_\mathcal{O}] + 
			\mathbb{E}[\sup_{t \in \mathcal{B}_{m,m'}}|v_p^{j,*} (t^M)|\one_{\mathcal{O}^c} ] ,
			\\   
			\le	\mathbb{E}[\sup_{t \in \mathcal{B}_{m,m'}} |v_p^{j,*} (t)|\one_{\mathcal{O}}] +
			\sup_{t \in \mathcal{B}_{m,m'}}|\E[t^{(M)}(U_{0,0}^*)]| + 	
			M \P(\mathcal{O}^c),  \quad \text{(using $|t^{M}(U_{j,k}) |\le M$),}
			\\
			\le	\mathbb{E}[\sup_{t \in \mathcal{B}_{m,m'}} |v_p^{j,*} (t)|] +
			c\frac{D^{\delta/2}}{q_n \Delta_n n^2}+c\frac{D^{1/2}n^{\varepsilon_0}}{n^4}
			\le  \sqrt{ \mathbb{E}[\sup_{t \in \mathcal{B}_{m,m'}}(v_p^*)^2(t)] }+c \frac{D^{1/2}}{n^2}.
		\end{multline*}
	To find an upper bound for the right hand side here above we act similarly to how we did before \eqref{eq: calcolo nu 2}: we introduce the orthonormal basis $(\bar{\psi}_k)_k$, such that each $t \in \mathcal{B}_{m, m'}$ can be written as the following
	$$t = \sum_{l = 1}^{D} \bar{\alpha}_l \bar{\psi}_l, \qquad \mbox{with } \sum_{l = 1}^{D} \bar{\alpha}_l^2 \le 1.$$
	Similarly to \eqref{eq: calcolo nu 2}, we have 
		\begin{eqnarray*}
		\sup_{t \in \mathcal{B}_{m, m'}} (v_p^{*})^2(t) &=& \sup_{\sum_{l = 1}^{D} \bar{\alpha}_l^2 \le 1} (v_p^{*})^2\left(\sum_{l = 1}^{D} \bar{\alpha}_l \bar{\psi}_l \right) \le \sup_{\sum_{l = 1}^{D} \bar{\alpha}_l^2 \le 1} \left(\sum_{l = 1}^{D} \bar{\alpha}_l^2\right)\left(\sum_{l = 1}^{D} (v_p^{*})^2 (\bar{\psi}_l) \right) \\
		&=& \sum_{l = 1}^{D} (v_p^{j,*})^2  (\bar{\psi}_l).
		\end{eqnarray*}
	Acting exactly as we did in order to get \eqref{eq: nu 1} and Lemma \ref{lemma: variance Cti} on $v_{n,1}^2$ and $v_{n,2}^2$ (as for Equation \eqref{eq:controlnu2Dm}) we obtain
	$$ \sqrt{\mathbb{E}[\sup_{t \in \mathcal{B}_{m,m'}}(v_p^{j,*})^2(t)]} \le c \sqrt{\frac{D^{ 1 + 2 \varepsilon}}{n \Delta_n}}. $$
	In turn we have $ \mathbb{E}[\sup_{t \in \mathcal{B}_{m,m'}} |v_p^{j,*} (t^{(M)})| ] \le c 
	\sqrt{\frac{D^{ 1 + 2 \varepsilon}}{n \Delta_n}}=: H$.

We now use Talagrand inequality as in Lemma \ref{lemma: Talagrand in Klein Rio}. It follows  
	\begin{eqnarray*}
		\mathbb{E}\left[\left(\sup_{t \in \mathcal{B}_{m, \w{m}}}\nu^*_{n} (t)^2- 2H^2\right)_+ \one_{\mathcal{O}}\right] &\le & 
		\frac{D^{\delta}}{p_n q_n \Delta_n} \exp \left(- c \frac{D^{ 1 + 2 \varepsilon} p_n q_n \Delta_n}{n \Delta_n D^\delta}\right) + \frac{c n^{2 \varepsilon_0} D }{ p_n^2} 
		\exp\left(- c\frac{ p_n D^{\frac{1}{2} { + \varepsilon}}}{ \sqrt{n \Delta_n} n^{\varepsilon_0} D^\frac{1}{2}  }\right)\\
		&\le& \frac{c D^\delta}{n \Delta_n } \exp (- \frac{c}{2} D^{1 { + 2 \varepsilon} - \delta}) + \frac{c n^{2 \varepsilon_0} D }{ p_n^2} \exp \left(- \frac{c \sqrt{n} { D^{\varepsilon}}}{2q_n \sqrt{\Delta_n} n^\varepsilon_0}\right),
	\end{eqnarray*}
where we used $\frac{2p_nq_n}{n} \xrightarrow{n\to\infty} 1$. We recall that $q_n =  \lfloor c \frac{\log n}{\Delta_{min}}\rfloor$. We observe that, as $\Delta_{min}$ and $\Delta_n$ differs only from a constant,
	$\frac{c \sqrt{n}}{\sqrt{\Delta_n}q_n n^{\varepsilon_0}} \ge \frac{c \sqrt{n \Delta_n}}{\log n n^{\varepsilon_0}}$. Moreover, it goes to $\infty$ for $n$ going to infinity as we have assumed that $(\log n) n^\varepsilon = o(\sqrt{n \Delta_n})$ for some $\varepsilon$ and the constant $\varepsilon_0$ can be arbitrarily small. Therefore, the second term here above is negligible compared to the first one. It follows, using also the definition of $p(m, \w{m})$, the fact that for $D > 1$ it is 
	 ${ D^\delta e^{- \frac{c}{2}D^{1 + 2 \varepsilon - \delta}} <}D^\delta e^{- \frac{c}{2}D^{1 - \delta}}  < c'' e^{- c'D^{1 - \frac{\delta}{2}}}$
	and the fourth point of Assumption \ref{ass: subspace}, 
	$$ \sum_{m' \in \cM_n} \mathbb{E}\left[\left(\sup_{t \in \mathcal{B}_{m, \w{m}}}\nu^*_{n} (t)^2- p(m, m')\right)_+\right] \le \frac{c''}{n \Delta_n} \sum_{m' \in \cM_n} D^\delta e^{- c'D^{1 - \delta}}  \le  \frac{c'' \, \Sigma (c')}{n \Delta_n}.  $$
	Replacing it in the equivalent of \eqref{eq: da sostituire finale}, considering that we are now on $\mathcal{O}$, it follows
	\begin{eqnarray*}
		\mathbb{E}\left[\left \| \w{g}_{\w{m}} - g \right \|_n^2 \one_{\mathcal{O}}\right] &\le & { 3} \mathbb{E}[\left \| {g}_{m} - g \right \|_n^2] + c \Delta_n^{1 - \tilde\varepsilon} +  { 48} \mathbb{E}[ p(m, \w{m})] \\
		&&+ { 12} \pen (m) - { 12}\mathbb{E}[\pen(\w{m})] + \frac{c}{n \Delta_n}. 
	\end{eqnarray*}
	It provides us, using also \eqref{eq: risk O c both}, 
	\begin{eqnarray*}
		\mathbb{E}[\left \| \w{g}_{\w{m}} - g \right \|_n^2] &\le & {3} \mathbb{E}[\left \| \w{g}_{m} - g \right \|_n^2] + c \Delta_n^{1 - \tilde\varepsilon} + \frac{c}{n^2 \Delta_n^\frac{3}{2}} + c \pen (m)+ \frac{c}{n \Delta_n} \\
		& \leq &
		c_1 \inf_{m \in \cM_n} \left \{ \inf_{t \in \mathcal{S}_m}\| t - g \|_{\pi^X}^2 + \pen(m)  \right \} + C_2 \Delta_n^{1 - \tilde\varepsilon} + \frac{C_3}{n^2 \Delta_n^\frac{3}{2}} + \frac{C_4}{n \Delta_n}.
	\end{eqnarray*}
\end{proof}

\newpage
\appendix

\section{Appendix}
\label{sec:appendix}
For the following proofs, the lemma stated and proved below is a very helpful tool. It provides the size of the increments of both $X$ and $\lambda$.

\begin{lemma}
Suppose that A1-A3 hold. Then, there exist $c_1$ and $c_2$ positive constants such that, 
 for all $t> s$, $|t - s| < 1$ the following hold true
\begin{enumerate}
\item For all $p \ge 2$,  $\mathbb{E}[|X_t - X_s|^p] \le c_1 | t-s |  $.
\item For all $p \ge 2$ and for any $j \in \left \{ 1 , \ldots , M \right \}$, $\mathbb{E}[|\lambda^{(j)}_t - \lambda^{(j)}_s |^p ]\le c_2 | t - s| $.
\item $\mathbb{E}[| \lambda_t - \lambda_s |  | \mathcal{F}_{s} ] \le c_3 |t- s|(1 + |\lambda_s |) $, where $\lambda = (\lambda^{(1)}, \ldots , \lambda^{(M)})$ and $|\cdot|$ stands for the euclidean norm.
\item For any $j \in \left \{ 1 , \ldots , M \right \}$, $\sup_{h \in [0,1]} \mathbb{E}[|\lambda^{(j)}_{ s + h} || \mathcal{F}_{s}] \le |\lambda^{(j)}_s | + c |h| (1 + {\modar  |\lambda_s|}) $.
\end{enumerate}
\label{lemma: incrementi}
\end{lemma}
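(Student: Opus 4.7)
The plan is to prove the four items in order, exploiting the SDEs satisfied by $X$ and $\lambda$ together with the moment bounds on Hawkes increments, which reduces every statement to a combination of BDG, Hölder and Gronwall.

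For item 1, I would start from the decomposition
$$X_t - X_s = \int_s^t b(X_u)\,du + \int_s^t \sigma(X_u)\,dW_u + \int_s^t a(X_{u^-})\sum_{j=1}^M dN^{(j)}_u$$
and control the three terms in $L^p$. By Jensen and the linear growth of $b$ (consequence of item 1 of Assumption~1) combined with the bounded moments from Proposition~\ref{prop: Lyapounov}, the drift term is bounded by $c(t-s)^p\le c(t-s)$ since $p\ge 2$ and $|t-s|<1$. By Burkholder--Davis--Gundy and $|\sigma|\le\sigma_1$, the Brownian term is bounded by $c\sigma_1^p(t-s)^{p/2}\le c(t-s)$. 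For the jump term, $|a|\le a_1$ reduces the problem to bounding $\mathbb{E}[(N_t^{(j)}-N_s^{(j)})^p]$; writing $N_t^{(j)}-N_s^{(j)}=\int_s^t\lambda_u^{(j)}\,du+(\tilde N_t^{(j)}-\tilde N_s^{(j)})$ and applying BDG to the compensated martingale (its predictable quadratic variation is $\int_s^t \lambda_u^{(j)}\,du$) together with the arbitrary-order moment bounds on $\lambda$ from Proposition~\ref{prop: Lyapounov} yields $\mathbb{E}[(N_t^{(j)}-N_s^{(j)})^p]\le c(t-s)$ for $|t-s|<1$. Item 2 is then obtained by starting from the explicit equation
$$\lambda^{(j)}_t - \lambda^{(j)}_s = -\alpha\int_s^t (\lambda^{(j)}_u - \zeta_j)\,du + \sum_{i=1}^M c_{j,i}(N^{(i)}_t - N^{(i)}_s),$$
and combining the moment bounds on $\lambda$ with the Hawkes increment bound just established.

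For item 3, I would take conditional expectation with respect to $\mathcal{F}_s$ in the above equation for $\lambda^{(j)}$. Setting $m_j(u):=\mathbb{E}[\lambda^{(j)}_u\mid\mathcal{F}_s]$ and using that the compensator of $N^{(i)}$ is $\int_0^\cdot \lambda^{(i)}_r\,dr$, one obtains the linear integral system
$$m_j(u) = \lambda^{(j)}_s + \int_s^u\!\Big(-\alpha(m_j(r)-\zeta_j) + \sum_{i=1}^M c_{j,i} m_i(r)\Big)dr,$$
to which Gronwall's inequality applied to the vector $(m_j)_{j}$ gives $m_j(u)\le (|\lambda_s|+c)$ and $|m_j(u)-\lambda^{(j)}_s|\le c(u-s)(1+|\lambda_s|)$ for $|u-s|<1$. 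Returning to the SDE and taking conditional absolute-value expectations, the drift contribution is controlled by $\int_s^t(m_j(u)+\zeta_j)\,du\le c(t-s)(1+|\lambda_s|)$ and the jump contribution by $\sum_i c_{j,i}\mathbb{E}[N^{(i)}_t-N^{(i)}_s\mid\mathcal{F}_s]=\sum_i c_{j,i}\int_s^t m_i(u)\,du$, which is again $\le c(t-s)(1+|\lambda_s|)$. Summing over $j$ gives item~3.

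Item 4 is then an immediate consequence of item 3: since $\lambda^{(j)}\ge 0$, one writes $\mathbb{E}[\lambda^{(j)}_{s+h}\mid\mathcal{F}_s]\le |\lambda^{(j)}_s|+\mathbb{E}[|\lambda^{(j)}_{s+h}-\lambda^{(j)}_s|\mid\mathcal{F}_s]$ and applies item~3. The main technical obstacle is the unconditional/conditional control of moments of the Hawkes increments for arbitrary $p$: once this is obtained (either by direct BDG-bootstrapping or by quoting the Markovian exponential-Lyapunov estimates already used in \cite{DLL} and recalled in Proposition~\ref{prop: Lyapounov}), the rest of the argument is routine Gronwall and triangle-inequality bookkeeping.
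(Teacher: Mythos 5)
Your proof is correct and, for the most part, follows the same route as the paper: the same three--term decomposition of $X_t-X_s$, Jensen plus bounded moments of $X$ for the drift, BDG plus $|\sigma|\le \sigma_1$ for the Brownian part, and the bounded moments of $\lambda$ from Proposition \ref{prop: Lyapounov} for the jump part; item 4 is handled identically. The two places where you genuinely deviate are worth comparing. First, for the jump contribution in items 1--2 the paper applies Kunita's inequality directly to $\int_s^t a(X_{u^-})\,dN_u^{(j)}$ with compensator $\lambda_u^{(j)}du$, which yields the rate $|t-s|$ in one stroke; you instead use $|a|\le a_1$ to reduce to $\mathbb{E}[(N_t^{(j)}-N_s^{(j)})^p]$ and bootstrap. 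This works, but be careful with the parenthetical claim that BDG is applied with ``the predictable quadratic variation $\int_s^t\lambda_u^{(j)}du$'': for a purely discontinuous martingale BDG must be used with the true quadratic variation $[\tilde N^{(j)}]=N^{(j)}$ (giving the bootstrap $\mathbb{E}[|\tilde N^{(j)}_t-\tilde N^{(j)}_s|^p]\le c\,\mathbb{E}[(N^{(j)}_t-N^{(j)}_s)^{p/2}]$), or one must invoke a Kunita/Bichteler--Jacod type bound that retains the term $\mathbb{E}\int_s^t\lambda_u^{(j)}du$; an inequality based on the predictable bracket alone would miss exactly the first-order term that produces the rate $|t-s|$ (a Poisson increment has $\mathbb{E}[|N_t-N_s|^p]\asymp (t-s)$ for small $t-s$, not $(t-s)^{p/2}$). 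Second, for item 3 the paper bounds the drift and the jump term via the compensation formula and then applies Gronwall directly to $u\mapsto\mathbb{E}[|\lambda_u-\lambda_s|\,|\,\mathcal F_s]$, whereas you apply Gronwall to the conditional means $m_j(u)=\mathbb{E}[\lambda_u^{(j)}\,|\,\mathcal F_s]$, which solve a linear integral system, and then feed $m_j(u)\le c(1+|\lambda_s|)$ back into the drift and compensator contributions. Both arguments are valid: yours buys the explicit control of the conditional means (which also gives item 4 directly), while the paper's is marginally shorter since it never needs the system.
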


\begin{proof}
We start proving the first point. From the dynamic \eqref{eq: model} of the process X we have
$$| X_t - X_s|^p \le c\left| \int_s^t b(X_u) du\right|^p + c\left| \int_s^t \sigma(X_u) dW_u\right|^p +  c\left| \int_s^t a(X_{u^-}) \sum_{j = 1}^M dN_u^{(j)}\right|^p = I_1 + I_2 + I_3. $$
From Jensen inequality, the polynomial growth of $b$ and the fact that $X$ has bounded moments it follows
\begin{equation}
\mathbb{E}[I_1] \le c|t-s|^{p-1} \int_s^t \mathbb{E}[| b(X_u)|^p] du \le c|t-s|^p.
\label{eq: stima I1}
\end{equation}
Using Burkholder-Davis-Gundy inequality, Jensen inequality and Assumption \ref{ass: X}.2. on $\sigma$ it is 
\begin{equation}
\mathbb{E}[I_2] \le c\mathbb{E}\left[\left(\int_s^t \sigma^2(X_u) du\right)^\frac{p}{2}\right] \le c|t-s|^{\frac{p}{2}-1} \int_s^t \mathbb{E}[| \sigma (X_u)|^p] du \le c \sigma_1^p |t-s|^\frac{p}{2}.
\label{eq: stima I2}
\end{equation}
{\modch To evaluate $I_3$, Kunita inequality will be useful. We refer to the Appendix of \cite{Protter} for its proof in a general form, while below (A7) on page 52 of \cite{Contrast} can be found an example of its application in a form closer to the one we are going to use. For a compensated Poisson random measure $\tilde{\mu}= \mu- \bar{\mu}$ and a jump coefficient $l(x,z)$, indeed, Kunita inequality provides the following:
\begin{eqnarray*}
\mathbb{E}\left[\left|\int_0^t \int_{\mathbb{R}} l(X_{s^-},z)\tilde{\mu}(ds, dz)\right|^p \right] &\le& c \mathbb{E}\left[\int_0^t \int_{\mathbb{R}} |l(X_{s^-},z)|^p \bar{\mu}(ds, dz)\right] \\
&&+ c \mathbb{E}\left[\left|\int_0^t \int_{\mathbb{R}} l^2(X_{s^-},z) \bar{\mu}(ds, dz)\right|^\frac{p}{2}\right].
\end{eqnarray*}
We remark that, up to change the constant $c$ in the right hand side, the equation here above holds with the measure $\mu$ instead of the compensated one $\tilde{\mu}$. In the sequel we will apply Kunita inequality on the measure $dN_u^{(j)}$ and the compensated one $d\tilde{N}_u^{(j)}$, for $j \in \left \{ 1, ... , M \right \}$. The compensator is in this case $\lambda^{(j)} (u) du$. \\
Using on $I_3$ Kunita inequality together with Jensen inequality and the boundedness of $a$ we get }
\begin{eqnarray}
	\mathbb{E}[I_3] &\le &c \sum_{j = 1}^M \mathbb{E}\left[ \int_s^t |a(X_{u^-})|^p \lambda^{(j)}_u du + \left(\int_s^t a^2(X_{u^-}) \lambda^{(j)}_u du\right)^\frac{p}{2}
	{\modar
		+\left(\int_s^t a(X_{u^-}) \lambda^{(j)}_u du\right)^p}\right]
	\nonumber \\
	&&\leq 
	\sum_{j = 1}^M c|a_1|^p \int_s^t \mathbb{E}[\lambda^{(j)}_u] du + c|a_1|^p |t-s|^{\frac{p}{2}-1}\int_s^t \mathbb{E}[|\lambda^{(j)}_u|^\frac{p}{2}] du
	{\modar +c|a_1|^p |t-s|^{p-1}\int_s^t \mathbb{E}[|\lambda^{(j)}_u|^p] du}
	\nonumber\\
	&&\le c |a_1|^p (|t-s|+ |t-s|^{\frac{p}{2}} ) {\   \le \ } c |a_1|^p |t-s|,
	\label{eq: stima I3}
\end{eqnarray}
{where we have used that $\lambda$ has the moment of any order because of Proposition \ref{prop: Lyapounov}.}
From \eqref{eq: stima I1}, \eqref{eq: stima I2} and \eqref{eq: stima I3}, as $|t-s| < 1$, it follows $\mathbb{E}[|X_t - X_s|^p] \le c_1 | t-s |  $. \\
\textit{Point 2} \\
Concerning the second point, for any $j \in \left \{ 1, \ldots , M \right \} $ it is
$$|\lambda^{(j)}_t - \lambda^{(j)}_s |^p \le c\left|\alpha \int_s^t (\lambda^{(j)} (u) - \zeta_j) du \right|^p + c \left| \int_s^t \sum_{i = 1}^M c_{\modar j,i} dN_u^{(i)}\right|^p.$$
Acting as in the proof of the first point, using as main arguments Jensen inequality, Kunita inequality and the boundedness of the moments of $\lambda$, we easily get the wanted estimation. \\
\textit{Point 3} \\
We consider the dynamic of $\lambda$ gathered in \eqref{eq: model} in matrix form and so we have
$$\lambda_t - \lambda_s  = \alpha \int_s^t (\lambda_u - \zeta) du  +  \int_s^t c dN_u  = : D_s + G_s,$$
where $\lambda_t =(\lambda_t^{(1)}, \ldots , \lambda_t^{(M)})$, $c \in \mathbb{R}^M \times \mathbb{R}^M$.
We start evaluating $D_s$. By adding and subtracting $\lambda_s$ we easily get, denoting as $\mathbb{E}_s[\cdot]$ the quantity $\mathbb{E}[\cdot |\mathcal{F}_s ]$,
$$\mathbb{E}_s[|D_s|] \le c |t-s| (1 + |\lambda_s|) + c \int_s^t \mathbb{E}_s[|\lambda_u - \lambda_s |] ds. $$
On $G_s$ we use compensation formula and we apply the same reasoning as before, getting
$$\mathbb{E}_s[|G_s|] \le \mathbb{E}_s\left[\int_s^t c |\lambda_u | du \right] \le c |t-s||\lambda_s| + c \int_s^t \mathbb{E}_s[|\lambda_u - \lambda_s |] ds. $$
Putting the pieces together it follows
$$\mathbb{E}_s[|\lambda_t - \lambda_s |] \le c |t-s| (1 + |\lambda_s|) + c \int_s^t \mathbb{E}_s[|\lambda_u - \lambda_s |] ds. $$
Finally, Gronwall lemma yields
$$\mathbb{E}_s[|\lambda_t - \lambda_s |] \le c |t-s| (1 + |\lambda_s|) e^c.$$
\textit{Point 4}
We observe that, for any $h \in [0,1]$, 
$$\mathbb{E}_s[|\lambda_{s + h}^{(j)} |] \le |\lambda_s^{(j)} | + \mathbb{E}_s[|\lambda_{s + h}^{(j)} - \lambda_s^{(j)} |] \le |\lambda_s^{(j)} | + c |h| (1 + |\lambda_s|),  $$
where we have used the just showed third point of this lemma.
\end{proof}

\subsection{Proof of Proposition \ref{prop: Lyapounov}}
\begin{proof}
We write $V(x, y) = V_1(x) + V_2(y)$, where $V_1(x) = |x|^m$ for $m$ arbitrarily big and $V_2(y)= e^{\sum_{i,j} m_{i j} | y^{(i j)}| } $. From the definition \eqref{eq: def generator} of $A^{\tilde{z}}$ we have 
$$A^{{ \tilde{Z}}}V = A_1^{{ \tilde{Z}}}V  + A_2^{{ \tilde{Z}}}V,$$
where 
\begin{eqnarray*}
A_1^{{ \tilde{Z}}}V (x,y) &:=& \partial_x V(x,y) b(x) + \frac{1}{2} \sigma^2(x) \partial^2_x V(x,y) +\sum_{j = 1}^M f_j \left(\sum_{k = 1}^M y^{(j k)}\right)[V_1(x + a(x)) - V_1(x)] \\
&=& m |x|^{m - 1} b(x) + \frac{1}{2} \sigma^2(x) m (m - 1) |x|^{m - 2} + \sum_{j = 1}^M f_j \left(\sum_{k = 1}^M y^{(j k)}\right)[|x + a(x)|^m - |x|^m] 
\end{eqnarray*}
is the { jump-diffusion} part and 
\begin{eqnarray*}
A_2^{{ \tilde{Z}}}V (x,y) &:=& A^{{ \tilde{Z}}}V (x,y) - A_1^{{ \tilde{Z}}}V (x,y) \\
&=& - \alpha \sum_{i,j=1}^M y^{(i j)} \partial_{y^{(i j)}} V (x,y) + \sum_{j = 1}^M f_j (\sum_{k = 1}^M y^{(j k)} ) [V_2( y + \Delta_j) - V_2 (y)],
\end{eqnarray*}
 is the { Hawkes} part of the generator. The arguments of the proof of Proposition 4.5 in \cite{8 in DLL} imply that
\begin{equation}
A_2^{{ \tilde{Z}}}V (x,y) = A_2^{{ \tilde{Z}}}V_2 (y) \le - c_1 V_2(y) + c_2 \one_{K_1} (y),
\label{eq: stima A2 Z}
\end{equation}
with $c_1$ and $c_2$ some positive constants and $K_1$ some compact of $\mathbb{R}^{M \times M}$. Moreover, denoting $\bar{f} (y) := \sum_{j = 1}^M f_j (\sum_{k = 1}^M y^{(j k)})$ the total jump rate, it is
$$A_1^{{ \tilde{Z}}}V (x,y) = m |x|^{m - 1} b(x) + \frac{1}{2} \sigma^2(x) m (m - 1) |x|^{m - 2} +\bar{f} (y) [|x + a(x)|^m - |x|^m].$$
From the drift condition on $b$ gathered in the fourth point of Assumption \ref{ass: X} and the boundedness of both $\sigma^2$ and $a$ it follows
\begin{equation}
A_1^{{ \tilde{Z}}}V (x,y) \le -d m |x|^m + c |x|^{m - 2} + \bar{f}(y) (c_1 |x|^{m - 1} + \ldots + c_m).
\label{eq: stima A1 Z}
\end{equation}
We observe that, for any $x$ such that $|x| > r$, $|x|^{m - 2}$ is negligible compared to $|x|^m = V_1(x)$. To study the last term in the right hand side of \eqref{eq: stima A1 Z}, we choose $1 < p < 2$ and $q> 2$ such that $p(m - 1) < m$ (i e $p< 1 + \frac{1}{m - 1}$) and $\frac{1}{p} + \frac{1}{q} = 1$. Then, 
$$\bar{f}(y) (c_1 |x|^{m-1} + \ldots + c_m) \le \frac{c}{p} (c_1 |x|^{ m-1} + \ldots + c_m)^p + \frac{c}{q} \bar{f}(y)^q. $$
The first term is again negligible compared to $|x|^m = V_1(x)$, being $p(m-1) < m$. To estimate the second one we observe that, for each $y \in \mathbb{R}^M$ the total jump rate $\bar{f}(y)$ can be seen as $\sum_{i = 1}^M (\zeta_i +  \sum_{j} y^{(i j)})$ (see page 12 in \cite{DLL}). Therefore, it is 
$$\bar{f}(y) \le \bar{c} + \tilde{c} \sum_{i,j = 1}^M | y^{(i j)}|  \le \bar{c} + \tilde{c}_2 \log (V_2(y)), $$
which is negligible with respect to the negative term of \eqref{eq: stima A2 Z} $- c_1 V_2(y) $.
The same reasoning applies for $\frac{c}{q} \bar{f}(y)^q$. It follows that
$$A_1^{\tilde{z}}V (x,y) \le -d m |x|^m  + o(V_1(x)) + o(V_2(y))$$
which, together with \eqref{eq: stima A2 Z}, conclude the proof { of the first part. Regarding the boundedness of the moments, { we use that the Lyapunov function $V$ admits a finite integral with respect to the stationary probability of $\tilde{Z}=(X,Y)$. As the process $\lambda$  can be recovered as a linear function of $Y$, the ergodicity $\tilde{Z}$ of implies the ergodicity of $Z=(X,\lambda)$ as well, and we have existence of bounded moments of any order for both $X$ and $\lambda$ under the stationary law.}}
\end{proof}

\subsection{Proof of Lemma \ref{lemma: boundpiX}}
\begin{proof}
	We first recall the representation of $\pi^X(x)$ given in the proof of Proposition 3.7 in \cite{DLL}. For $t>0$, let 
	$L_t=\sup\{ s \le t \mid \exists j, ~ \Delta N^{(j)}_s=1 \}$ be the time of the last jump before $t$, with $L_t=0$ if there is no such jump. Then, we have for all $x \in \mathbb{R}$
	$$
	\pi^X(x)=\int_{\mathbb{R}\times\mathbb{R}^{M\times M}}
	\pi(d z) \E_z\left[p_{t-L_t}(X_{L_t},x) \right],
	$$
	where $(p_s)_{s>0}$ is the family of transition densities associated to the stochastic differential equation $dX_t=b(X_t)dt +\sigma(X_t) d W_t$.
	From Proposition 1.2 of \cite{Gobet02}, we know that there exist constants $c$, $C$ such  that for all $s>0$, $(u,x)\in \mathbb{R}^2$,
	$$
	p_{s}(u,x) \le c s^{-1/2} e^{ - \frac{(x-u)^2}{Cs} } e^{Csu^2}.
	$$
We deduce that
 $p_{s}(u,x) \le  c s^{-1/2} e^{ - \frac{(x-u)^2}{Cs} } e^{2Cs(x-u)^2+2Csx^2}
 \le c s^{-1/2} e^{- \frac{(x-u)^2}{2Cs} } e^{2Csx^2}$, if $s$ is smaller than $1/(2C)$. Choosing $t < 1/(2C)$, it yields
 \begin{equation}
 \pi^X(x) \le c \int_{\mathbb{R}\times\mathbb{R}^{M\times M}} \pi(d z)
 \E_z\left[\frac{1}{\sqrt{t-L_t}}\right]e^{2 C t x^2}.
 \label{E:lemma_maj_piX_formuleDLL} 
 \end{equation}
 We now give an upper bound for $\E_z\left[\frac{1}{\sqrt{t-L_t}}\right]$. Writing $(t-L_t)^{-1/2}=\frac{1}{2} \int_0^t \frac{1_{\{s \le L_t\}} ds}{(t-s)^{3/2}} + t^{-1/2}$, we have
 \begin{equation}
  \E_z\left[\frac{1}{\sqrt{t-L_t}}\right]  \le \frac{1}{2} \int_0^t \frac{P_z(L_t \ge s) }{(t-s)^{3/2}}ds + \frac{1}{t^{1/2}}.
  \label{E:lemma_maj_piX_Ez}
 \end{equation}
From the definition of $L_t$, it is
$P_z(L_t \ge s)=P_z(\exists u \in [s,t], \exists j ~:~ \Delta N^{(j)}_u=1)$ which implies
\begin{align*}
	P_z(L_t \ge s) &\le P_z\left(\sum_{j=1}^M \int_{[s,t]} d N_u^{(j)} \ge 1\right) \le \E_z \left[ \sum_{j=1}^M \int_{[s,t]} d N_u^{(j)} \right]
	\\
	& \le \E_z \left[\sum_{j=1}^M \int_s^t \lambda_u^{(j)} du\right]= \sum_{j=1}^M \int_s^t  \E_z [\lambda_u^{(j)}] du.
\end{align*}  
Using Lemma \ref{lemma: incrementi} we get that if $u\le 1 $, $\E_z [\lambda_u^{(j)}] \le c(1+|\lambda_0(z)|)$, where $\lambda_0^j(z)=f_j(\sum_{i=1}^My^i)$ for $z=(x,y)$. Thus, if $t$ is  chosen smaller than $1$, we have 
$P_z(L_t \ge s) \le c(1+|\lambda_0(z)|)(t-s)$. Using this control with \eqref{E:lemma_maj_piX_Ez} we deduce,
$$
 \E_z\left[\frac{1}{\sqrt{t-L_t}}\right]\le \frac{c}{2} \int_0^t \frac{(1+|\lambda_0(z)|)}{(t-s)^{1/2}}ds + \frac{1}{t^{1/2}} \le  \frac{c}{\sqrt{t}} (1+|\lambda_0(z)|).
$$
From \eqref{E:lemma_maj_piX_formuleDLL}, we obtain
$\pi^X(x)\le \frac{ce^{2 C t x^2}}{\sqrt{t}} \int_{\mathbb{R}\times\mathbb{R}^{M\times M}} \pi(d z)
(1+|\lambda_0(z)|)$. By Proposition \ref{prop: Lyapounov}, we know that the intensity $\lambda$ has finite moments of any order under the stationary measure, and thus $ \int_{\mathbb{R}\times\mathbb{R}^{M\times M}} \pi(d z)
(1+|\lambda_0(z)|)\le c < \infty$. This gives $\pi^X(x)\le \frac{ce^{2 C t x^2}}{\sqrt{t}}$ for any sufficiently small $t$, and the lemma follows. 
\end{proof}
\begin{remark}
The proof of Lemma \ref{lemma: boundpiX} heavily relies  on the integrability near zero of the supremum of the heat kernel  and is thus limited to the dimension $1$. We do not know if it is possible to extend this result to higher dimension for the process $X$. However, it is certainly possible to extend this proof to more general situations, as for instance the case where the jump intensity depends on $X$. 
\end{remark}

\subsection{Proof of Lemma \ref{lemma: proba varphi -1}}
\begin{proof}
By the definition of $\varphi$, for any $k \ge 1$ $|\varphi_{\Delta_{n,i}^\beta}(\Delta_i X) - 1|^k$ is different from zero only if $|\Delta_i X| > \Delta_{n,i}^\beta$. Therefore, 
\begin{eqnarray}
\mathbb{E}[|\varphi_{\Delta_{n,i}^\beta}(\Delta_i X) - 1|^k] &\le & c \mathbb{E}[\one_{\left \{ |\Delta_i X| > \Delta_{n,i}^\beta \right \}}] \nonumber\\
&=& c \mathbb{E}\left[\one_{\left \{ |\Delta_i X| > \Delta_{n,i}^\beta, |J_{t_i}| \le \frac{\Delta_{n,i}^\beta}{2} \right \}}\right] + c \mathbb{E}\left[\one_{\left \{ |\Delta_i X| > \Delta_{n,i}^\beta, |J_{t_i}| > \frac{\Delta_{n,i}^\beta}{2} \right \}}\right].
\label{eq: strat varphi -1}
\end{eqnarray}
We denote as $\Delta_i X^c$ the increment of the continuous part of $X$, which is
$$\Delta_i X^c := X_{t_{i + 1}}^c - X_{t_i}^c = \int_{t_i}^{t_{i + 1}} b (X_s) ds + Z_{t_i}.$$
The first term in the right hand side of \eqref{eq: strat varphi -1} is \begin{equation}
c \mathbb{E}\left[\one_{\left \{ |\Delta_i X^c| >\frac{\Delta_{n,i}^\beta}{2} \right \}}\right] = c\mathbb{P}(|\Delta_i X^c| >\frac{\Delta_{n,i}^\beta}{2}) \le \frac{c \, \mathbb{E}\left[|\Delta_i X^c|^r\right]}{\Delta_{n,i}^{\beta r}} \le c \Delta_{n,i}^{r (\frac{1}{2} - \beta)}, 
\label{eq: stima X^c}
\end{equation}
where we have used Markov inequality and a classical estimation for the continuous increments of $X$ (see for example point 6 of Lemma 1 in \cite{Unbiased}). In order to evaluate the second term in the right hand side of \eqref{eq: strat varphi -1}, instead, we have to introduce the set
$$N_{i,n}:= \left \{ \sum_{j = 1}^M |\Delta_i N^{(j)}| := \sum_{j = 1}^M |N^{(j)}_{t_{i + 1}} - N^{(j)}_{t_i}| \le \frac{4 \Delta_{n,i}^\beta}{a_1}  \right \}.$$
We observe that, on $N_{i,n}^c$, there exists $j \in \left \{ 1, \ldots , M \right \}$ such that $|\Delta_i N^{(j)}| \neq 0$. Therefore,
\begin{equation}
\mathbb{P} (N_{i,n}^c) \le { \sum_{j=1}^M}  \mathbb{P}(|\Delta_i N^{(j)}| \ge 1)  \le { \sum_{j=1}^M} \mathbb{E}[|\Delta_i N^{(j)}|]   \le c  { M} \Delta_{n,i} .
\label{eq: proba Nin c}
\end{equation}
On $N_{i,n}$, instead, $\forall j$ $|\Delta_i N^{(j)}| = 0$ and so $(N_{i,n}) \cap \left \{ |J_{t_i}| > \frac{\Delta_{n,i}^\beta}{2} \right \} = \emptyset$.
It follows that the second term in the right hand side of \eqref{eq: strat varphi -1} is
$$c \mathbb{E}\left[\one_{\left \{ |\Delta_i X| > \Delta_{n,i}^\beta, |J_{t_i}| > \frac{\Delta_{n,i}^\beta}{2}, N_{i,n} \right \}}\right] + c \mathbb{E}[\one_{\left \{ |\Delta_i X| > \Delta_{n,i}^\beta, |J_{t_i}| > \frac{\Delta_{n,i}^\beta}{2}, N_{i,n}^c \right \}}] \le c \mathbb{P}( N_{i,n}^c) \le c\Delta_{n,i}.$$
Putting the pieces together, as $r$ is arbitrary, it follows
$$\mathbb{E}\left[|\varphi_{\Delta_{n,i}^\beta}(\Delta_i X) - 1|^k\right]  \le c \Delta_{n,i}.$$
\end{proof}

\subsection{Proof of Lemma \ref{lemma: esp sauts}}
\begin{proof}
Again, we act differently depending on whether the jumps are big or not:
\begin{equation}
	\mathbb{E}[|J_{t_i}|^q \varphi^k_{\Delta_{n,i}^\beta}(\Delta_i X)] = \mathbb{E} \left[|J_{t_i}|^q \varphi^k_{\Delta_{n,i}^\beta}(\Delta_i X)\one_{\left \{ |J_{t_i}| > {\modar 3\Delta_{n,i}^\beta}\right \}}\right] + \mathbb{E} \left[|J_{t_i}|^q \varphi^k_{\Delta_{n,i}^\beta}(\Delta_i X)\one_{\left \{ |J_{t_i}| \le {\modar 3\Delta_{n,i}^\beta}\right \}}\right].
	\label{eq: salti inizio}
\end{equation}
By the definition of $\varphi$ it is different from $0$ only if $|\Delta_i X| \le 2 \Delta_{n,i}^\beta$. As $\Delta_i X = \Delta_i X^c + J_{t_i}$, it is
$$\mathbb{E}\left[|J_{t_i}|^q \varphi^k_{\Delta_{n,i}^\beta}(\Delta_i X)\one_{\left \{ |J_{t_i}| > {\modar 3\Delta_{n,i}^\beta}\right \}}\right] \le \mathbb{E} \left[|J_{t_i}|^q \one_{\left \{ |\Delta_i X^c| > {\modar\Delta_{n,i}^\beta}\right \}}\right] \le \mathbb{E} [|J_{t_i}|^{q p_1}]^\frac{1}{p_1} \mathbb{E}\left[\one_{\left \{ |\Delta_i X^c| > {\modar \Delta_{n,i}^\beta}\right \}}\right]^\frac{1}{p_2}  $$
$$\le c \Delta_{n,i}^{\frac{1}{p_1}} \, \Delta_{n,i}^{\frac{r}{p_2}(\frac{1}{2} - \beta) } { \le} \,  c \Delta_{n,i}^{r(\frac{1}{2} - \beta) - \varepsilon},$$
where we have used first of all Holder inequality and then Kunita inequality and \eqref{eq: stima X^c}. We remark it is possible to use Kunita inequality only for $q p_1 \ge 2$. However, as in the estimation here above the power of $\Delta$ is arbitrarily large, we can always choose $p_1$ such that, for any $q \ge 1$, $q p_1$ is bigger than $2$. In order to evaluate the second term of \eqref{eq: salti inizio}, we introduce again the set $N_{i,n}$ {\modar defined in the proof of Lemma \ref{lemma: proba varphi -1}.} On $N_{i,n}$ the increments $\Delta_i N^{(j)}$ are null and so $|J_{t_i}| = 0$. On $N_{i,n}^c$ instead, using also \eqref{eq: proba Nin c}, we have
$$\mathbb{E}\left[|J_{t_i}|^q \varphi^k_{\Delta_{n,i}^\beta}(\Delta_i X)\one_{\left \{ |J_{t_i}| \le {\modar 3\Delta_{n,i}^\beta}, N_{i,n}^c \right \}}\right] \le c \Delta_{n,i}^{\beta q} \mathbb{P}(N_{i,n}^c) \le c \Delta_{n,i}^{1 + \beta q}.$$
By the arbitrariness of $r$ it follows
$$\mathbb{E} [|J_{t_i}|^q \varphi^k_{\Delta_{n,i}^\beta}(\Delta_i X)] \le c \Delta_{n,i}^{1 + \beta q},$$
as we wanted.
\end{proof}

\subsection{Proof of Proposition \ref{lemma: size A B E}}
\begin{proof}
As the second point is useful in order to prove the first one, we start proving point 2. \\
\textit{Point 2} \\
By definition we know that $B_{t_i}$ is centered. 
In the sequel we denote as $\mathbb{E}_i[\cdot]$ the conditional expected value $\mathbb{E}[\cdot | \mathcal{F}_{t_i} ]$.
Regarding the second moment, it is 
$$\mathbb{E}_i[B_{t_i}^2] \le \frac{1}{\Delta_{n,i}^2} \mathbb{E}_i\left[Z_{t_i}^4 + (\int_{t_i}^{t_{i + 1}} \sigma^2(X_s) ds )^2\right] \le \frac{c}{\Delta_{n,i}^2} \mathbb{E}_i \left[\left(\int_{t_i}^{t_{i + 1}} \sigma^2(X_s) ds \right)^2\right] \le c \sigma_1^4$$
where we have used, sequentially, BDG inequality, Jensen inequality and the boundedness of $\sigma$.
Using the same arguments we show the following:
$$\mathbb{E}_i[B_{t_i}^4] \le \frac{1}{\Delta_{n,i}^4} \mathbb{E}_i\left[Z_{t_i}^8 + \left(\int_{t_i}^{t_{i + 1}} \sigma^2(X_s) ds \right)^4\right] \le \frac{c}{\Delta_{n,i}^4} \mathbb{E}_i\left[\left(\int_{t_i}^{t_{i + 1}} \sigma^2(X_s) ds \right)^4\right] \le c \sigma_1^8.$$
\textit{Point 1} \\
We analyse the behaviour of
$$\tilde{A}_{t_i} =  \sigma^2(X_{t_i}) (\varphi_{\Delta_{n,i}^\beta}(\Delta_i X) - 1) + A_{t_i} \varphi_{\Delta_{n,i}^\beta}(\Delta_i X) + B_{t_i}(\varphi_{\Delta_{n,i}^\beta}(\Delta_i X) - 1).$$
From Holder inequality, the boundedness of $\sigma$ and a repeated use of Lemma {\modchi \ref{lemma: proba varphi -1}} we get
$$\mathbb{E}[\tilde{A}_{t_i}^2] \le c \sigma_1^4 \Delta_{n,i} + \mathbb{E}[A_{t_i}^2 \varphi^2_{\Delta_{n,i}^\beta}(\Delta_i X)] + \mathbb{E}[B_{t_i}^{2 p}]^{\frac{1}{p}} c \Delta_{n,i}^{\frac{1}{q}}. $$
We evaluate the moments of $B_{t_i}$ acting as in the proof of the first point and we choose $p$ big and $q$ next to $1$, getting 
\begin{equation}
\mathbb{E}[\tilde{A}_{t_i}^2] \le c \sigma_1^4 \Delta_{n,i} + \mathbb{E}[A_{t_i}^2 \varphi^2_{\Delta_{n,i}^\beta}(\Delta_i X)] + c \sigma_1^2 \Delta_{n,i}^{1 - \tilde{\varepsilon}},
\label{eq: Atilde A}
\end{equation}
for $\tilde{\varepsilon} > 0$ arbitrarily small. We are left to study $A_{t_i}^2\varphi^2_{\Delta_{n,i}^\beta}$. From its definition, recalling that $\varphi$ is a bounded function, we obtain
\begin{eqnarray*}
\mathbb{E}[A_{t_i}^2 \varphi^2_{\Delta_{n,i}^\beta}(\Delta_i X)]  &\le & \frac{c}{\Delta_{n,i}^2} \mathbb{E}\left[\left(\int_{t_i}^{t_{i + 1}} b(X_s) ds \right)^4\right] + \frac{c}{\Delta_{n,i}^2} \mathbb{E}\left[\left(Z_{t_i} + J_{t_i}\right)^2\left(\int_{t_i}^{t_{i + 1}} b(X_s) - b(X_{t_i}) ds \right)^2\right] \\
&&+ \frac{c}{\Delta_{n,i}^2} \mathbb{E}\left[\left(\int_{t_i}^{t_{i + 1}} \sigma^2(X_s) - \sigma^2(X_{t_i}) ds \right)^2\right] + 4 \mathbb{E}[b^2(X_{t_i})Z_{t_i}^2] =: \sum_{j = 1}^4 I_j.
\end{eqnarray*}
Using Jensen inequality, the polynomial growth of $b$ and the existence of bounded moments of $X$ we get
\begin{equation}
I_1 \le \frac{c}{\Delta_{n,i}^2} \Delta_{n,i}^3 \int_{t_i}^{t_{i + 1}} \mathbb{E}[b^4(X_s)] ds \le c \Delta_{n,i}^2.
\label{eq: I1 (carre)}
\end{equation}
On $I_2$ we use first of all Holder inequality. Then, on the first we use B.D.G. and Kunita inequalities, as in \eqref{eq: stima I2} and \eqref{eq: stima I3}, while on the second the finite increments theorem, the boundedness of $b'$ and the first point of Lemma \ref{lemma: incrementi}:
\begin{eqnarray}
I_2 &\le & \frac{c}{\Delta_{n,i}^2} \mathbb{E}[(Z_{t_i} + J_{t_i})^{ 8}]^\frac{1}{2} \mathbb{E}\left[\left(\int_{t_i}^{t_{i + 1}} b(X_s) - b(X_{t_i}) ds \right)^4\right]^\frac{1}{2}
\nonumber
\\
&\le & \frac{c}{\Delta_{n,i}^2}  \Delta_{n,i}^\frac{1}{2} \Delta_{n,i}^{\frac{3}{2}} \mathbb{E}\left[\int_{t_i}^{t_{i + 1}} c |X_s - X_{t_i}|^4 ds \right]^\frac{1}{2} \le c \Delta_{n,i}. 
\label{eq: I2 (carre)}
\end{eqnarray}
In order to study the behaviour of $I_3$, Jensen inequality, the finite increment theorem, the boundedness of the derivative of $\sigma^2$ and the first point of Lemma \ref{lemma: incrementi} will be once again useful.
\begin{equation}
I_3 \le \frac{c}{\Delta_{n,i}^2} \Delta_{n,i} \mathbb{E}\left[\int_{t_i}^{t_{i + 1}} c |X_s - X_{t_i}|^2 ds\right] \le c \Delta_{n,i}.
\label{eq: I3 (carre)}
\end{equation}
From Holder inequality, the polynomial growth of $b$, the boundedness of the moments of $X$ and BDG inequality we obtain
\begin{equation}
I_4 \le c \mathbb{E}[b(X_{t_i})^4]^\frac{1}{2} \mathbb{E}[Z_{t_i}^4]^\frac{1}{2} \le c \Delta_{n,i}.
\label{eq: I4 (carre)}
\end{equation}
Putting the pieces together it follows that, for any $\tilde{\varepsilon} > 0$,
$$\mathbb{E}[\tilde{A}_{t_i}^2] \le c \Delta_{n,i}^{1 - \tilde{\varepsilon}} .$$
We now evaluate $\mathbb{E}[\tilde{A}_{t_i}^4]$. Acting as above \eqref{eq: Atilde A} it easily follows
$$\mathbb{E}[\tilde{A}_{t_i}^4] \le c \Delta_{n,i}^{1 - \tilde{\varepsilon}} + \mathbb{E}[A_{t_i}^4 \varphi_{\Delta_{n,i}^\beta}(\Delta_i X)].$$
Replacing the definition of $A_{t_i}$ we get that $\mathbb{E}[A_{t_i}^4 \varphi^4_{\Delta_{n,i}^\beta}(\Delta_i X)]$ is again the sum of 4 terms, that we now denote as $\tilde{I}_1, \ldots, \tilde{I}_4$. Using exactly the same arguments as in the study of $\mathbb{E}[A_{t_i}^4 \varphi^4_{\Delta_{n,i}^\beta}(\Delta_i X)]$ we easily get
$$\tilde{I}_1 \le \frac{c}{\Delta_{n,i}^4} \Delta_{n,i}^7 \int_{t_i}^{t_{i + 1}} \mathbb{E}[b^8(X_s)] ds \le c \Delta_{n,i}^4,$$
\begin{eqnarray*}
\tilde{I}_2 &\le & \frac{c}{\Delta_{n,i}^4} \mathbb{E}[(Z_{t_i} + J_{t_i})^4]^\frac{1}{2} \mathbb{E}\left[\left(\int_{t_i}^{t_{i + 1}} b(X_s) - b(X_{t_i}) ds \right)^8\right]^\frac{1}{2}\\
&\le &\frac{c}{\Delta_{n,i}^4} (\Delta_{n,i} + \Delta_{n,i}^\frac{1}{2}) \Delta_{n,i}^{\frac{7}{2}} \mathbb{E}\left[\int_{t_i}^{t_{i + 1}} c |X_s - X_{t_i}|^8 ds \right]^\frac{1}{2} \le c \Delta_{n,i}, 
\end{eqnarray*}
$$\tilde{I}_3 \le \frac{c}{\Delta_{n,i}^4} \Delta_{n,i}^3 \mathbb{E}\left[\int_{t_i}^{t_{i + 1}} c |X_s - X_{t_i}|^4 ds\right] \le c \Delta_{n,i}, $$
$$\tilde{I}_4 \le \mathbb{E}[b(X_{t_i})^8]^\frac{1}{2} \mathbb{E}[Z_{t_i}^8]^\frac{1}{2} \le c \Delta_{n,i}^2.$$
The four equations here above provide the wanted result. \\

\textit{Point 3} \\
To prove the estimations on the jumps gathered in the third point of Proposition \ref{prop: volatility} we repeatedly use Lemma \ref{lemma: esp sauts}. Using also Holder inequality with $p$ big and $q$ next to $1$, BDG inequality, the polynomial growth of $b$ and the boundedness of the moments of $X$ it is
\begin{eqnarray*}
\mathbb{E}[|E_{t_i}|\varphi_{\Delta_{n,i}^\beta}(\Delta_i X)] &\le & c \mathbb{E}[|b (X_{t_i})||J_{t_i}|\varphi_{\Delta_{n,i}^\beta}(\Delta_i X)] + \frac{c}{\Delta_{n,i}} \mathbb{E}[|Z_{t_i}||J_{t_i}|\varphi_{\Delta_{n,i}^\beta}(\Delta_i X)] \\
&&+ \frac{c}{\Delta_{n,i}} 
\mathbb{E} \left[|J_{t_i}|^2\varphi_{\Delta_{n,i}^\beta}(\Delta_i X)\right] 
\\
&\le & c \mathbb{E}[|b (X_{t_i})|^p]^\frac{1}{p} \mathbb{E}[|J_{t_i}|^q \varphi^q_{\Delta_{n,i}^\beta}(\Delta_i X)]^\frac{1}{q} + \frac{c}{\Delta_{n,i}} \mathbb{E}[|Z_{t_i}|^p]^\frac{1}{p} \mathbb{E}[|J_{t_i}|^q \varphi^q_{\Delta_{n,i}^\beta}(\Delta_i X)]^\frac{1}{q} \\
&&+ \frac{c}{\Delta_{n,i}} \Delta_{n,i}^{1 + 2 \beta}
\end{eqnarray*}
thus, because, as $\beta \in (0, \frac{1}{2})$, we can always find an $\varepsilon > 0$ such that $\frac{1}{2} + \beta - \varepsilon > 2 \beta$. { Hence, we set $1/q = 1 - \varepsilon$}. It comes
$$\mathbb{E}[|E_{t_i}|\varphi_{\Delta_{n,i}^\beta}(\Delta_i X)] \le c\Delta_{n,i}^{\frac{1}{q} + \beta} + \frac{c}{\Delta_{n,i}} \Delta_{n,i}^{\frac{1}{2}} \Delta_{n,i}^{\frac{1}{q} + \beta} + c\Delta_{n,i}^{2 \beta} = c\Delta_{n,i}^{1 + \beta - \varepsilon} + c\Delta_{n,i}^{\frac{1}{2} + \beta - \varepsilon} + c\Delta_{n,i}^{2 \beta} =  c\Delta_{n,i}^{2 \beta}.$$

In analogous way we obtain 
\begin{eqnarray*}
\mathbb{E}[|E_{t_i}|^2\varphi_{\Delta_{n,i}^\beta}(\Delta_i X)]
&\le &c \mathbb{E}[|b (X_{t_i})|^{2p}]^\frac{1}{p} \mathbb{E}[|J_{t_i}|^{2q} \varphi^q_{\Delta_{n,i}^\beta}(\Delta_i X)]^\frac{1}{q} + \frac{c}{\Delta_{n,i}^2} \mathbb{E}[|Z_{t_i}|^{2p}]^\frac{1}{p} \mathbb{E}[|J_{t_i}|^{2q} \varphi^q_{\Delta_{n,i}^\beta}(\Delta_i X)]^\frac{1}{q} \\
&&+ \frac{c}{\Delta_{n,i}^2} \mathbb{E}[|J_{t_i}|^{4} \varphi_{\Delta_{n,i}^\beta}(\Delta_i X)] \\
&\le &c\Delta_{n,i}^{\frac{1}{q} + 2\beta} + \frac{c}{\Delta_{n,i}
^2} \Delta_{n,i} \Delta_{n,i}^{\frac{1}{q} + 2\beta} +  \frac{c}{\Delta_{n,i}
^2} \Delta_{n,i}^{1 + 4 \beta} \\
&=& c\Delta_{n,i}^{1 +2\beta - \varepsilon} + c\Delta_{n,i}^{2 \beta - \varepsilon} + c\Delta_{n,i}^{4 \beta - 1} =  c\Delta_{n,i}^{4 \beta - 1},
\end{eqnarray*}
where the last inequality is, again, consequence of the fact that we can always find $\varepsilon > 0$ for which $2 \beta - \varepsilon > 4 \beta - 1$. Finally, acting as before, 
$$\mathbb{E}[|E_{t_i}|^4 \varphi_{\Delta_{n,i}^\beta}(\Delta_i X)] \le c\Delta_{n,i}^{1 +4\beta - \varepsilon} + \frac{c}{\Delta_{n,i}^4} \Delta_{n,i}^2 \Delta_{n,i}^{1 + 4 \beta - \varepsilon} + \frac{c}{\Delta_{n,i}^4} \Delta_{n,i}^{1 + 8 \beta} =  c\Delta_{n,i}^{8 \beta - 3}.$$

\end{proof}

\subsection{Proof of Proposition \ref{prop: size A B C E}}
\begin{proof}
\textit{Point 1} \\
Regarding the first point, we first introduce $\tilde{b}(X_s) := b(X_s) + a(X_{s^-}) \sum_{j = 1}^M \lambda_s^{(j)} ds$. We observe that, as $b$ has polynomial growth, $a$ is bounded and both $\lambda$ and $X$ have bounded moments of any order, then $\tilde{b}$ has bounded moments of any order as well. Recalling that $A_{t_i}$ is given as in \eqref{eq: Ati} we can denote 
$$A_{t_i} =: \sum_{j = 1}^7 \bar{I}_j.$$
Replacing $\tilde{b}$ with $b$, we already know from \eqref{eq: I1 (carre)}, \eqref{eq: I2 (carre)}, \eqref{eq: I3 (carre)} and \eqref{eq: I4 (carre)} that
\begin{equation}
\mathbb{E}[\bar{I}_1^2 + \bar{I}_2^2 + \bar{I}_3^2 + \bar{I}_6^2] \le c \Delta_{n,i}.
\label{eq: bar I start}
\end{equation}
We now consider $\bar{I}_4$. From Assumption \ref{ass: X} we know the function $a$ is Lipschitz and with bounded derivative. Therefore, we use the finite increments theorem followed by the first point of Lemma \ref{lemma: incrementi}. It provides us, using also Jensen inequality and Holder inequality with $q$ big and $p$ next to $1$, 
\begin{eqnarray}
\mathbb{E}[\bar{I}_4^2] &\le& \frac{c}{\Delta_{n,i}^2} \Delta_{n,i} \int_{t_i}^{t_{i + 1}} \mathbb{E}\left[(a^2(X_s) - a^2(X_{t_i}))^2\left(\sum_{j = 1}^M \lambda_s^{(j)}\right)^2\right] ds
\nonumber \\
&\le & \frac{c}{\Delta_{n,i}} \int_{t_i}^{t_{i + 1}} \mathbb{E}[(a^2(X_s) - a^2(X_{t_i}))^{2p}]^\frac{1}{p} \mathbb{E}\left[\left(\sum_{j = 1}^M \lambda_s^{(j)} \right)^{2q}\right]^\frac{1}{q} ds \nonumber  \\
&\le & \frac{c}{\Delta_{n,i}} \int_{t_i}^{t_{i + 1}} \Delta_{n,i}
^{\frac{1}{p}} ds \le c \Delta_{n,i}^{1 - \tilde{\varepsilon}},
\label{eq: bar I 4}
\end{eqnarray}
where we have also used the boundedness of the moments of $\lambda$ { and set $1 / p=1-\tilde{\varepsilon}$}. On $\bar{I}_5$ we use that $a(x) \le a_1$ and the second point of Lemma \ref{lemma: incrementi}, getting 
\begin{equation}
\mathbb{E}[\bar{I}_5^2] \le \frac{c}{\Delta_{n,i}^2} \Delta_{n,i} \int_{t_i}^{t_{i + 1}} \sum_{j = 1}^M \mathbb{E}[(\lambda_s^{(j)} - \lambda_{t_i}^{(j)})^2] ds \le c \Delta_{n,i}.
\label{eq: bar I 5}
\end{equation}
To conclude the proof of the bound on $\mathbb{E}[A_{t_i}^2]$ we are left to evaluate $\bar{I}_7$. We do that through Holder and Kunita inequalities. It yields 
\begin{equation}
\mathbb{E}[\bar{I}_7^2] \le c \mathbb{E}[{\tilde{b}}(X_{t_i})^2 J_{t_i}^2] \le \mathbb{E}[{ \tilde{b}}(X_{t_i})^{2p}]^\frac{1}{p} \mathbb{E}[J_{t_i}^{2q}]^\frac{1}{q} \le c \Delta_{n,i}^{1 - \tilde{\varepsilon}},
\label{eq: bar I 7}
\end{equation}
where in the last inequality we have chosen $p$ big and $q$ next to $1$. {In particular, we have taken $1/q = 1 - \Tilde{\varepsilon}$.}
From \eqref{eq: bar I start}, \eqref{eq: bar I 4}, \eqref{eq: bar I 5} and \eqref{eq: bar I 7} it follows 
$$\mathbb{E}[A_{t_i}^2] \le c \Delta_{n,i}^{1 - \tilde{\varepsilon}}.$$
Concerning the fourth moment of $A_{t_i}$, as before we know from Proposition \ref{lemma: size A B E} that
\begin{equation}
\mathbb{E}[\bar{I}_1^4 + \bar{I}_2^4 + \bar{I}_3^4 + \bar{I}_6^4] \le c \Delta_{n,i}.
\label{eq: bar I fourth start}
\end{equation}
Acting as in \eqref{eq: bar I 4} we get
\begin{equation}
\mathbb{E}[\bar{I}_4^4] \le \frac{c}{\Delta_{n,i}^4} \Delta_{n,i}^3 \int_{t_i}^{t_{i + 1}} \mathbb{E}\left[(a^2(X_s) - a^2(X_{t_i}))^4\left(\sum_{j = 1}^M \lambda_s^{(j)} \right)^4\right] ds
\label{eq: bar I 4 fourth}
\end{equation}
$$\le \frac{c}{\Delta_{n,i}} \int_{t_i}^{t_{i + 1}} \mathbb{E}[(a^2(X_s) - a^2(X_{t_i}))^{4p}]^\frac{1}{p} \mathbb{E}\left[\left(\sum_{j = 1}^M \lambda_s^{(j)} \right)^{4q}\right]^\frac{1}{q} ds \le c \Delta_{n,i}^{1 - \tilde{\varepsilon}},$$
{ where we have chosen $1/p = 1 - \Tilde{\varepsilon}$}.
In the same way, acting as in \eqref{eq: bar I 5} we obtain
\begin{equation}
\mathbb{E}[\bar{I}_5^4] \le \frac{c}{\Delta_{n,i}^4} \Delta_{n,i}^3 \int_{t_i}^{t_{i + 1}} \sum_{j = 1}^M \mathbb{E}[(\lambda_s^{(j)} - \lambda_{t_i}^{(j)})^4] ds \le c \Delta_{n,i}.
\label{eq: bar I 5 fourth}
\end{equation}
We conclude the proof of the point 2 by observing that 
\begin{equation}
\mathbb{E}[\bar{I}_7^4] \le c \mathbb{E}[\tilde{b}(X_{t_i})^{4p}]^\frac{1}{p} \mathbb{E}[J_{t_i}^{4q}]^\frac{1}{q} \le c \Delta_{n,i}^{1 - \tilde{\varepsilon}},
\label{eq: bar I 7 fourth}
\end{equation}
by the boundedness of the moments of $\tilde{b}$ and Kunita inequality. \\

\textit{Point 2} \\
We observe that $B_{t_i}$ is defined in the same way in Section \ref{S: volatility} and Section \ref{S: volatility + jumps}. Therefore, the second point has already been showed in point 2 of Proposition \ref{lemma: size A B E}. \\

\textit{Point 3} \\
By the definition of $E_{t_i}$ it clearly follows $\mathbb{E}_i[E_{t_i}] = 0$. We now analyse 
\begin{equation}
\mathbb{E}_i[E_{t_i}^2] \le \frac{c}{\Delta_{n,i}^2} \mathbb{E}_i[Z_{t_i}^2 J_{t_i}^2] + \frac{c}{\Delta_{n,i}^2} \mathbb{E}_i[J_{t_i}^4 + \left(\int_{t_i}^{t_{i + 1}} a^2(X_{s^-}) \sum_{j = 1}^M \lambda_s^{(j)} ds\right)^2].
\label{eq: Eti2 start}
\end{equation}
We show that the first term in the right hand side of the equation \eqref{eq: Eti2 start} is negligible if compared to the second one.
By a conditional version of Holder, BDG and Kunita inequalities we get
\begin{equation}
\frac{c}{\Delta_{n,i}^2} \mathbb{E}_i[Z_{t_i}^2 J_{t_i}^2] \le \frac{c}{\Delta_{n,i}^2} \mathbb{E}_i[Z_{t_i}^{2p}]^{\frac{1}{p}} \mathbb{E}_i[J_{t_i}^{2q}]^\frac{1}{q}\le \frac{c}{\Delta_{n,i}^2} \Delta_{n,i} \Delta_{n,i}^\frac{1}{q}  \le c \Delta_{n,i}^{- \varepsilon},
\label{eq: Eti2 termine 1}
\end{equation}
for any $\varepsilon > 0$, { setting $1/q = 1 -{\varepsilon}$}.
To study the last term in the right hand side of \eqref{eq: Eti2 start} we recall it is $J_{t_i} = \int_{t_i}^{t_{i + 1}} a(X_{s^-}) \sum_{j = 1}^M d\tilde{N}_s^{(j)}$. Therefore, from conditional Kunita inequality, we have
\begin{eqnarray*}
\frac{c}{\Delta_{n,i}^2} \mathbb{E}_i\left[J_{t_i}^4 + \left(\int_{t_i}^{t_{i + 1}} a^2(X_{s^-}) \sum_{j = 1}^M \lambda_s^{(j)} ds\right)^2\right]
&\le & \frac{c}{\Delta_{n,i}^2} \mathbb{E}_i \left[\int_{t_i}^{t_{i + 1}} a^4(X_{s^-}) \sum_{j = 1}^M \lambda_s^{(j)} ds \right. \\
&& \left. + 2\left(\int_{t_i}^{t_{i + 1}} a^2(X_{s^-}) \sum_{j = 1}^M \lambda_s^{(j)} ds\right)^2\right] \\
&\le&  \frac{c a_1^4}{\Delta_{n,i}^2}(1 + \Delta_{n,i}) \int_{t_i}^{t_{i + 1}} \mathbb{E}_i\left[\sum_{j = 1}^M \lambda_s^{(j)}\right] ds, 
\end{eqnarray*}

where we have also used Jensen inequality on the last term here above, which is the reason why we get an extra $\Delta_{n,i}$. From the fourth point of Lemma \ref{lemma: incrementi} it follows that the equation here above is upper bounded by $\frac{c a_1^4}{\Delta_{n,i}} \sum_{j = 1}^M \lambda^{(j)}_{t_i}$, plus a negligible term. Replacing it and \eqref{eq: Eti2 termine 1} in \eqref{eq: Eti2 start} it follows
$$\mathbb{E}_i[E_{t_i}^2] \le \frac{c a_1^4}{\Delta_{n,i}} \sum_{j = 1}^M \lambda_{t_i}^{(j)} + c \Delta_{n,i}^{- \varepsilon} \le \frac{c a_1^4}{\Delta_{n,i}} \sum_{j = 1}^M \lambda_{t_i}^{(j)}, $$
where the last inequality is a consequence of the fact that $\lambda$ is always strictly more than zero.
Regarding the fourth moment of $E_{t_i}$, from Kunita, Holder and Jensen inequality we have
$$\mathbb{E}[E_{t_i}^4] \le \frac{c}{\Delta_{n,i}^4} \mathbb{E}[Z_{t_i}^{4p}]^\frac{1}{p} \mathbb{E}[ J_{t_i}^{4q}]^\frac{1}{q} + \frac{c}{\Delta_{n,i}^4} \mathbb{E}_i\left[J_{t_i}^8 + \left(\int_{t_i}^{t_{i + 1}} a^2(X_{s^-}) \sum_{j = 1}^M \lambda_s^{(j)} ds\right)^4\right] $$
$$ \le \frac{c}{\Delta_{n,i}^4}(\Delta_{n,i}^{2} \Delta_{n,i}^{1 - \varepsilon} + \Delta_{n,i} + \Delta_{n,i}^{3} \Delta_{n,i}) \le \frac{c}{\Delta_{n,i}^3}.$$
\textit{Point 4} \\
The result follows directly from the definition of $C_{t_i}$ and the boundedness of $a$ and of the moments of $\lambda$.

\end{proof}

\subsection{Proof of Lemma \ref{lemma: variance Cti}}
\begin{proof}
It is
$$C_{t_i} { \tilde{\psi}_l(X_{t_i})} = a^2(X_{t_i}) \sum_{j = 1}^M (\lambda^{(j)}_{t_i} - \mathbb{E}[\lambda^{(j)}_{t_i} | X_{t_i} ]) { \tilde{\psi}_l(X_{t_i})} = : f(X_{t_i}, \lambda_{t_i}). $$
Since
$$\text{Var}\left(\frac{1}{n} \sum_{i =0}^{n -1} f(X_{t_i}, \lambda_{t_i})\right) \le \frac{1}{n^2} \sum_{i =0}^{n -1}  \sum_{j =0}^{n -1} \text{Cov}(f(X_{t_i}, \lambda_{t_i}), f(X_{t_j},\lambda_{t_j})),$$
we need to estimate the covariance. \\
As explained in Section \ref{S: ergodicity} we know that, under our assumptions, the process $Z := (X, \lambda)$ is $\beta$- mixing with exponential decay. It means that there exists $ \gamma >0$ such that
$$\beta_X (t) \le \beta_Z (t) \le C e^{- \gamma t}.$$
If the process $Y$ is $\beta$- mixing, then it is also $\alpha$-mixing and so the following estimation holds (see Theorem 3 in Section 1.2.2 of \cite{Doukhan})
$$ |\text{Cov}(Y_{t_i}, Y_{t_j})| \le c \left \| Y_{t_i} \right \|_p \left \| Y_{t_j} \right \|_q \alpha^\frac{1}{r} (Y_{t_i}, Y_{t_j}) $$
with { $\alpha$ the coefficient of $\alpha$-mixing and} $p$, $q$ and $r$ such that $ \frac{1}{p} + \frac{1}{q} + \frac{1}{r} =1$.
Using that 
$$\alpha (Z_{t_i}, Z_{t_j}) \le \beta_Z (|{t_i}-{t_j}|) \le c  e^{- \gamma |{t_i}-{t_j}|},$$
 in our case the inequality here above becomes 
$$|\text{Cov}(f(X_{t_i}, \lambda_{t_i}), f(X_{t_j}, \lambda_{t_j}))| \le c { \left \| f(X_{t_i}, \lambda_{t_i}) \right \|_p \left \| f(X_{t_j}, \lambda_{t_j}) \right \|_q} e^{- \frac{1}{r} \gamma |t_i -t_j|}. $$
{
From the definition of $f$ and the boundedness of $a$ and the existence of moments of $\lambda$ we have
\begin{align*}
\left \| f(X_{t_i}, \lambda_{t_i}) \right \|_p & \le c \sum_{j = 1}^M \left \| (\lambda^{(j)}_{t_i} - \mathbb{E}[\lambda^{(j)}_{t_i} | X_{t_i} ]) \right \|_{p p_1} \left \| \tilde{\psi}_l(X_{t_i}) \right \|_{p p_2} \\
& \le  c \sum_{j = 1}^M  \left \| \tilde{\psi}_l(X_{t_i}) \right \|_{p p_2}, 
\end{align*}
with $p_1$ and $p_2$ such that $\frac{1}{p_1} + \frac{1}{p_2} = 1$. We remark that, as we are going to bound both the $L^p$ and the $L^q$ norm of $f(X_{t_i}, \lambda_{t_i})$, it seems natural to chose $p= q$, in order to repeat twice the same estimation. Then, as $\frac{1}{p} + \frac{1}{q} + \frac{1}{r} =1$ and we need $r > 0$, we obtain $p = q > 2$. We can then chose, for ${\varepsilon} > 0$ arbitrarily small, $p p_2 = 2 + {\varepsilon}$. It implies $p_1 = \frac{2 + \varepsilon}{2 + \varepsilon - p}$, which leads us to chose $2 < p = 2 + \tilde{\varepsilon} < 2 + \varepsilon$, for some $\tilde{\varepsilon} < \varepsilon$. Then, using that the $L^2$ norm of $\Tilde{\psi}_l$ is smaller than $1$ and that we can bound $\Tilde{\psi}_l(x)$ by $D_m$, we obtain
$$\left \| \tilde{\psi}_l(X_{t_i}) \right \|_{ 2 + \varepsilon} \le c \left \| \tilde{\psi}_l(X_{t_i}) \right \|_{\infty}^{\varepsilon} \left \| \tilde{\psi}_l(X_{t_i}) \right \|_{2} \le c D_m^{\varepsilon},$$
which provides 
$$\left \| f(X_{t_i}, \lambda_{t_i}) \right \|_p \le c M D_m^{\varepsilon}.$$
In a similar way, it is easy to see that
$$\left \| f(X_{t_i}, \lambda_{t_i}) \right \|_q \le c M D_m^{\varepsilon}.$$}
We now introduce a partition of $(0, T_n]$ {(where $T_n$ is the time horizon)} based on the sets $A_k:= ( k \frac{T_n}{n}, (k+1) \frac{T_n}{n}]$, for which $(0, T_n] = \cup_{k=0}^{n-1} A_k$. 
Now each point $t_i$ in $(0, T_n]$ can be seen as $t_{k, h}$, where $k$ identifies the particular set $A_k$ to which the point belongs while, defining $M_k$ as $|A_k|$, $h$ is a number in $\left \{ 1, \ldots , M_k \right \}$ which enumerates the points in each set. It follows
\begin{eqnarray*}
\frac{c}{n^2} \sum_{i =0}^{n -1}  \sum_{j =0}^{n -1} e^{- \frac{1}{r} \gamma |t_i -t_j|} &\le& \frac{c}{n^2} \sum_{k_1 =0}^{n -1}  \sum_{k_2 =0}^{n -1} \sum_{h_1 =1}^{M_{k_1}}  \sum_{h_2 =1}^{M_{k_2}} e^{- \frac{1}{r} \gamma |t_{k_1, h_1} -t_{k_2, h_2}|} \\
&\le&  \frac{c e^{\frac{{ 2 \gamma}}{r} \frac{T_n}{n}}}{n^2} \sum_{k_1 =0}^{n -1}  \sum_{k_2 =0}^{n -1} \sum_{h_1 =1}^{M_{k_1}}  \sum_{h_2 =1}^{M_{k_2}} e^{- \frac{1}{r} \gamma |k_1 - k_2| \frac{T_n}{n}}, 
\end{eqnarray*}
where the last inequality is a consequence of the following estimation: for each $k_1, k_2 \in \left \{ 0,\ldots , n-1 \right \}$ it is $|t_{k_1, h_1} -t_{k_2, h_2}| \ge |k_1 - k_2|\frac{T_n}{n} - \frac{{ 2}T_n}{n}$. \\
{ We remark here that, as we are considering the general case where the discretization step is not necessarily uniform, we can not replace $\frac{T_n}{n}$ with simply $\Delta_n$: we have to keep it like this and compare it with $\Delta_{min}$ and $\Delta_{max}$, which is equal to $\Delta_n$ by definition.} \\
Now we observe that the exponent does not depend on $h$ anymore, hence the last term here above can be upper bounded by $\frac{c e^{\frac{{ 2\gamma}}{r}  \frac{T_n}{n}}}{n^2} \sum_{k_1 =0}^{n -1}  \sum_{k_2 =0}^{n -1} M_{k_1} M_{k_2}e^{- \frac{1}{r} \gamma |k_1 - k_2|\frac{T_n}{n}} $. \\
Moreover, remarking that the length of each interval $A_k$ is $\frac{T_n}{n}$, it is easy to see that we can always upper bound $M_k$ with $\frac{T_n}{n} \frac{1}{\Delta_{min}}$, with $T_n = \sum_{i = 0}^{n - 1} \Delta_{n,i} \le n \Delta_{n}$ and so $M_k \le \frac{\Delta_{n}}{\Delta_{min}}$, that we have assumed bounded by a constant $c_1$. \\
Furthermore, still using that $T_n \le n \Delta_{n}$, we have $e^{\frac{{ 2\gamma}}{r}  \frac{T_n}{n}} \le e^{\frac{{ 2\gamma}}{r} \Delta_{n} } \le c$ .

To conclude, we have to evaluate
$\frac{c }{n^2} \sum_{k_1 =0}^{n -1}  \sum_{k_2 =0}^{n -1} e^{- \frac{1}{r} \gamma |k_1 - k_2| \frac{T_n}{n}}$. We define $j := k_1 - k_2$ and we apply a change of variable, getting 
\begin{eqnarray*}
\frac{c }{n^2} \sum_{k_1 =0}^{n -1}  \sum_{k_2 =0}^{n -1} e^{- \frac{1}{r} \gamma |k_1 - k_2|\frac{T_n}{n}} &\le &\frac{c }{n^2} \sum_{j = -(n -1)}^{n -1} e^{- \frac{1}{r} \gamma |j|\frac{T_n}{n}} |n - j| \le \frac{c }{n}  \sum_{j = -(n -1)}^{n -1} e^{- \frac{1}{r} \gamma |j| \Delta_{min}} \\
&\le&  \frac{c}{n (1 - e^{- \frac{1}{r} \gamma \Delta_{min}})} \le \frac{c}{T_n} , 
\end{eqnarray*}
as we wanted.

\end{proof}

\subsection{Proof of Lemma \ref{lemma: P omega B complementare}}
\begin{proof}
In order to estimate the probability of the complementary of the set $\Omega_B$, as defined in \eqref{eq: Omega B def}, we first of all observe that $\Omega_B^c \subset \cup_{j,k} 
\left\{ \sup_{t\in \mathcal{B}_{m,m'} } |t(U^*_{k, j})| \ge \tilde{c} n^{\varepsilon_0} D^{\frac{1}{2}} \right\}$.  Now we find an upper bound for the probability of $\Omega_B^c$ focusing on what happens for $j=1$ and $k = 0$. 
Recalling the definition of $U^*_{j,k}$ in \eqref{E:def_t_Ukl} and using that,
as $t\in \mathcal{B}_{m,m'}$ whose dimension is $D$, 
$
\left \| t \right \|_\infty \le c D^{\frac{1}{2}}$,
we can write, {for any $\epsilon >0$ arbitrarily small,}
$$\mathbb{P}( \sup_{t\in \mathcal{B}_{m,m'} }  |t(U^*_{0, 1})| \ge \tilde{c} n^{ \varepsilon_0}  D^{\frac{1}{2}}
) \le \mathbb{P}\left(\frac{1}{q_n} \sum_{k = 1}^{q_n} |B^*_{t_k} +C^*_{t_k} + E^*_{t_k}| \ge \tilde{c} n^{ \varepsilon_0}\right) $$
\begin{equation}
 \le \mathbb{P}\left(\frac{1}{q_n} \sum_{k = 1}^{q_n} |B^*_{t_k}| \ge \frac{\tilde{c}}{3} n^{ \varepsilon_0} \right) + \mathbb{P}\left(\frac{1}{q_n} \sum_{k = 1}^{q_n} |C^*_{t_k}| \ge \frac{\tilde{c}}{3} n^{ \varepsilon_0} \right) + \mathbb{P}\left(\frac{1}{q_n} \sum_{k = 1}^{q_n} |E^*_{t_k}| \ge \frac{\tilde{c}}{3} n^{ \varepsilon_0}\right).
\label{eq: proba omega B both}
\end{equation}
From the definition of $B$ it is 
\begin{equation}
\frac{1}{q_n} \sum_{k = 1}^{q_n} |B^*_{t_k}| \le \frac{c}{q_n \Delta_n} \sum_{k = 1}^{q_n} Z^2_{t_k} + c.
\label{eq: B proba}
\end{equation}
Moreover, using Markov inequality and the boundedness of $\sigma$,
\begin{eqnarray}
\mathbb{P}\left(|Z_{t_k}| \ge c \sigma_1 \Delta_n^\frac{1}{2} \log n\right) &=& \mathbb{P}\left(e^{\frac{|Z_{t_k}|}{\sigma_1 \sqrt{\Delta_n}}} \ge n^c\right) \le \frac{1}{n^c} \mathbb{E}\left[e^{\frac{|Z_{t_k}|}{\sigma_1 \sqrt{\Delta_n}}}\right] \nonumber\\
&\le &\frac{1}{n^c} \mathbb{E}\left[e^{\frac{c'}{\Delta_n \sigma_1^2} \int_{t_k}^{t_{k + 1}} \sigma^2(X_s) ds}\right] \le \frac{c'}{n^c}. 
\label{eq: study Z}
\end{eqnarray}
Therefore, as the constant $c$ in \eqref{eq: B proba} can be moved in the other side of the inequality in the first probability of \eqref{eq: proba omega B both} and so it turns out not being influential, the first probability of \eqref{eq: proba omega B both} is upper bounded by $\frac{q_n}{n^c}$, which is arbitrarily small.
Concerning the second term of \eqref{eq: proba omega B both}, we use Markov inequality and the fact that $C$ has bounded moments. We get, $\forall r \ge 1$,
$$\mathbb{P}\left(\frac{1}{q_n}  \sum_{k = 1}^{q_n} |C^*_{t_k}| \ge \frac{\tilde{c}}{3} n^{ \varepsilon_0}\right) \le { \sum_{k=1}^{q_n}} \mathbb{P}\left(|C^*_{t_k}| \ge \frac{\tilde{c}}{3} n^{ \varepsilon_0}\right) \le c {\sum_{k=1}^{q_n}}\frac{\mathbb{E}[|C^*_{t_k}|^r]}{n^{r { \varepsilon_0}}} \le \frac{c q_n}{n^{r{ \varepsilon_0}} }.$$
Regarding the third term of \eqref{eq: proba omega B both} we observe that, replacing the value of $q_n$ we get 
\begin{equation}
\mathbb{P}\left(\frac{1}{q_n} \sum_{k = 1}^{q_n} |E^*_{t_k}| \ge \frac{\tilde{c}}{3} n^{ \varepsilon_0}\right) = \mathbb{P}\left( \sum_{k = 1}^{q_n} |E^*_{t_k}| \ge \frac{\tilde{c}}{3} n^{ \varepsilon_0} \frac{\log n}{ \Delta_n}\right).
\label{eq: start E}
\end{equation}
We now recall that, from the definition of $E_{t_k}$ it is 
$$\sum_{k = 1}^{q_n} |E^*_{t_k}| \le  \left|\frac{2}{\Delta_n} \sum_{k = 1}^{q_n} Z_{t_k} J_{t_k}  \right| + \left| \frac{1}{\Delta_n} \sum_{k = 1}^{q_n} J_{t_k}^2\right | + \left|  \frac{1}{\Delta_n} \int_0^{t_{q_n}}a(X_{s^-}) \sum_{j = 1}^M \lambda^{(j)} (s) ds \right|$$
$$=: I_1 + I_2 + I_3. $$
The right hand side of \eqref{eq: start E} is upper bounded by 
$$\mathbb{P}\left(I_1 \ge \frac{\tilde{c}}{9} n^{ \varepsilon_0} \frac{\log n}{ \Delta_n} \right) + \mathbb{P}\left(I_2 \ge \frac{\tilde{c}}{9} n^{ \varepsilon_0} \frac{\log n}{ \Delta_n} \right)  + \mathbb{P}\left(I_3 \ge \frac{\tilde{c}}{9} n^{\varepsilon_0} \frac{\log n}{ \Delta_n} \right). $$
Concerning the first one, we observe it is 
$$I_1 \le \frac{1}{\Delta_n} \sum_{k = 1}^{q_n} (Z_{t_k}^2 + J_{t_k}^2) = I_{1,1} + I_{1,2}.$$
The probability that $I_{1,1}$ is bigger than $\frac{\tilde{c}}{9} n^{ \varepsilon_0} \frac{\log n}{ \Delta_n} $ is arbitrarily small as a consequence of \eqref{eq: study Z}. $I_{1,2}$ is instead equal to { $I_2$} and so it is enough to study such a term. From Markov, Holder, BDG and Kunita inequalities we have 
$$\mathbb{P}\left(I_3 \ge \frac{\tilde{c}}{9} n^{ \varepsilon_0} \frac{\log n}{ \Delta_n} \right) \le \frac{\mathbb{E}[(I_3)^r]}{(n^{ \varepsilon_0} \log n \Delta_n^{-1})^r} \le \frac{c \Delta_n^{- r} t_{q_n}^{r}}{(n^{ \varepsilon_0} \log n \Delta_n^{-1})^r} \le \frac{c}{n^{{ \varepsilon_0} r}},$$
where we underline that the order of $t_{q_n}$ is $c q_n \Delta_n = c \frac{\log n}{\Delta_{min}} \Delta_n  {\  \le \ } c \log n$.
It is arbitrarily small. Concerning $I_2$, we want to estimate $\mathbb{P}(\sum_{k = 0}^{q_n - 1} J_{t_k}^2 \ge \frac{c}{9} n^{ \varepsilon_0} \log n) $. We now consider two different possibilities, starting from the definition of the following set
$$A:= \left \{ \exists \tilde{k}\in \left \{ 0,\ldots , q_n - 1 \right \} \, \mbox{ such that } J_{t_{\tilde{k}}}^2 \ge n^\frac{{ \varepsilon_0}}{2} \right \}.$$
Then 
$$\mathbb{P}\left(\sum_{k = 0}^{q_n - 1} J_{t_k}^2 \ge \frac{c}{9} n^{ \varepsilon_0} \log n\right) = \mathbb{P}\left(\sum_{k = 0}^{q_n - 1} J_{t_k}^2 \ge \frac{c}{9} n^{ \varepsilon_0} \log n, A\right) + \mathbb{P}\left(\sum_{k = 0}^{q_n - 1} J_{t_k}^2 \ge \frac{c}{9} n^{ \varepsilon_0} \log n, A^c\right).$$
We observe that Markov inequality and Kunita inequality yield
$$\mathbb{P}\left(\sum_{k = 0}^{q_n - 1} J_{t_k}^2 \ge \frac{c}{9} n^{ \varepsilon_0} \log n, A\right) \le \mathbb{P}(A) \le { \sum_{k = 0}^{q_n - 1}} \frac{\mathbb{E}[(J_{t_{\tilde{k}}})^{2 r}]}{n^\frac{{ \varepsilon_0} r}{2} } \le \frac{\Delta_n q_n}{n^{\frac{{ \varepsilon_0} r}{2} }} = \frac{c \log n}{n^{\frac{{ \varepsilon_0} r}{2} }} ,$$
which is arbitrarily small by the arbitrariness of $r$. We remark that on $A^c$, for every $k \in \left \{ 0,\ldots , q_n - 1 \right \}$, it is $J_{t_k}^2 < n^\frac{\varepsilon}{2} $. Therefore, to have the sum of them bigger than $\frac{c}{9} n^\varepsilon \log n$ we should have at least $\frac{c}{9} \log n n^\frac{\varepsilon}{2} $ jumps. Hence, denoting as $\Delta N_q$ the number of jumps in $[0, t_{q_n}]$, we have 
\begin{eqnarray*}
\mathbb{P}\left(\sum_{k = 0}^{q_n - 1} J_{t_k}^2 
  \geq \frac{c}{9} n^{ \varepsilon_0} \log n, A^c \right) &\le & \mathbb{P}\left(\Delta N_q > \frac{c}{9} 
  n^\frac{ { \varepsilon_0} }{2} \log n \right) \le c \frac{\mathbb{E}[(\Delta N_q )^r]}{(n^\frac{ { \varepsilon_0} }{2} \log n)^r} \\
&\leq & \frac{ { c\left(t_{q_{n}}^{r}+t_{q_{n}}\right)}}{(n^\frac{{\varepsilon_0}}{2} \log n )^r }
{ \le
\frac{c ((\log n)^r + \log n)}{(n^\frac{\varepsilon_0}{2} \log n )^r} 
\le \frac{c}{ {  n^\frac{\varepsilon_0 r}{2}}}
},
\end{eqnarray*}
where again we have used Markov inequality and we got a quantity arbitrarily small { choosing $r \ge 1$ large enough}. We put all the pieces together and we observe we can choose in particular $r$ for which
$$\mathbb{P}\left(\frac{1}{q_n} \sum_{k = 1}^{q_n} |E^*_{t_k}| \ge \frac{\tilde{c}}{3} n^{ \varepsilon_0} \right) \le \frac{c}{n^{\modar 5}}.$$
In the same way it is possible to choose $r$ and $\tilde{c}$ such that
$$\mathbb{P}(\Omega_B^c) 
\le  \sum_{j=0,1;k\in\{1,\dots,p_n\}}
\P \left( \sup_{t\in \mathcal{B}_{m,m'} } |t(U^*_{k, j})| \ge \tilde{c} n^{ \varepsilon_0} D^{\frac{1}{2}} \right)
\le \frac{cp_n}{n^5} \le
\frac{c}{n^4}. $$
\end{proof}
{\modar
	\subsection{Proof of Lemma \ref{lemma: variance U00}}
	\begin{proof}
		We observe that for any $t \in \mathcal{B}_{m, m'}$, by \eqref{E:def_t_Ukl} and Proposition \ref{prop: size A B C E}, it is
		\begin{align*}\E[t(U^*_{0,0})^2]&=\text{Var}(U^*_{0,0}) \le \frac{4}{q_n^2} \sum_{l = 1}^{q_n} \mathbb{E}\left[t^2(X^*_{t_l}) \mathbb{E}_l[B^{*,2}_{t_l} + E^{*,2}_{t_l}]\right]
			+ 4\text{Var}\left( \frac{1}{q_n}\sum_{l = 1}^{q_n} t(X^*_{t_l})C^{*}_{t_l} \right)
			\\
			&=: V_1+V_2.
		\end{align*}
		By the second and the third points of Proposition \ref{prop: size A B C E}, we can upper bound $V_1$ as
		\begin{eqnarray*}
			V_1 &\leq &\frac{c}{q_n^2} \sum_{l = 1}^{q_n} \mathbb{E}\left[t^2(X^*_{t_l})  (\sigma_1^2 + \frac{a_1^4}{\Delta_n} \sum_{j = 1}^M \lambda^{(j)}_{t_l})\right] \nonumber\\
			&\le &\frac{c}{q_n^2} \sum_{l = 1}^{q_n} \mathbb{E}\left[t^{2p}(X^*_{t_l})\right]^{\frac{1}{p}} \mathbb{E}\left[\left(\sigma_1^2 + \frac{a_1^4}{\Delta_n} \sum_{j = 1}^M \lambda^{(j)}_{t_i}\right)^q\right]^\frac{1}{q},
		\end{eqnarray*}
		where we have used H\"older inequality with $q$ big and $p$ next to $1$. We can see $t^{2p}(X^*_{t_l})$ as 
		$$t^{2 +(2p-2)}(X^*_{t_l}) = t^{2}(X^*_{t_l}) t^{(2p-2)}(X^*_{t_l}) \le \left \| t \right \|^{2 p - 2}_\infty t^{2}(X^*_{t_l}). $$
		From Assumption \ref{ass: subspace}, $\displaystyle \left \| t \right \|^{\frac{2 p - 2}{p}}_\infty \le cD_m^{\frac{2 p - 2}{2p}} \le c D_m^{\delta}$, for any $\delta$ arbitrarily small, as $p$ has been chosen next to $1$.
		Using also the boundedness of the moments 
		of $\lambda$ it follows that 
		$$
		V_1 \le 
		\frac{D_m^\delta}{q_n^2} \frac{c}{\Delta_n} q_n = 
		\frac{c D_m^\delta}{q_n \Delta_n}.$$
		
		Using the same arguments as in the proof of Lemma \ref{lemma: variance Cti}, { remarking that the sum over $n$ is now replaced by the sum over $q_n$ and that $t$ now plays the same role as $\Tilde{\psi}$, being such that its $L^2$ norm is smaller than $1$ and it is bounded by $D_m$,} we can show  
		$$
		V_2 \le \frac{c}{q_n \Delta_n} \| t \|_\infty^{2\delta} \le \frac{c}{q_n \Delta_n} D_m^{2\delta},
		$$
		for any $\delta>0$. It concludes the proof of the lemma.
	\end{proof}
}

{ \section*{Acknowledgements}
The authors would like to thank the anonymous referees
for their helpful remarks that helped to improve the  first version of the paper.}

\end{document}